\numberwithin{equation}{subsection}
\newtheorem{thm}{Theorem}[subsection]
\newtheorem*{thm*}{Theorem}
\newtheorem{cor}[thm]{Corollary}
\newtheorem*{cor*}{Corollary}
\newtheorem{lem}[thm]{Lemma}
\newtheorem{prop}[thm]{Proposition}
\newtheorem{prop-const}[thm]{Proposition-Construction}
\newtheorem*{conjecture*}{Conjecture}
\newtheorem*{princ*}{Principle}
\theoremstyle{remark}
\newtheorem{rem}[thm]{Remark}
\newtheorem{example}[thm]{Example}
\newtheorem{defin}[thm]{Definition}
\newtheorem{notation}[thm]{Notation}
\newtheorem{variant}[thm]{Variant}
\newcommand{\xar}[1]{\xrightarrow{#1}}
\newcommand{\rar}[1]{\xar{#1}}
\newcommand{\isom}{\rar{\simeq}}
\newcommand{\rightrightrightarrows}{%
        \mathrel{\vcenter{\mathsurround0pt
                \ialign{##\crcr
                        \noalign{\nointerlineskip}$\rightarrow$\crcr
                        \noalign{\nointerlineskip}$\rightarrow$\crcr
                        \noalign{\nointerlineskip}$\rightarrow$\crcr
                }%
        }}%
}
\newcommand{\into}{\hookrightarrow}
\newcommand{\onto}{\twoheadrightarrow}
\newcommand{\bDelta}{\mathbf{\Delta}}
\newcommand{\bB}{{\mathbb B}}
\newcommand{\bD}{{\mathbb D}}
\newcommand{\bF}{{\mathbb F}}
\newcommand{\bG}{{\mathbb G}}
\newcommand{\bL}{{\mathbb L}}
\newcommand{\bQ}{{\mathbb Q}}
\newcommand{\bS}{{\mathbb S}}
\newcommand{\bT}{{\mathbb T}}
\newcommand{\bZ}{{\mathbb Z}}
\newcommand{\sC}{{\EuScript C}}
\newcommand{\sD}{{\EuScript D}}
\newcommand{\sE}{{\EuScript E}}
\newcommand{\sF}{{\EuScript F}}
\newcommand{\sG}{{\EuScript G}}
\newcommand{\sI}{{\EuScript I}}
\newcommand{\sJ}{{\EuScript J}}
\newcommand{\sS}{{\EuScript S}}
\newcommand{\sX}{{\EuScript X}}
\newcommand{\on}{\operatorname}
\renewcommand{\ul}{\underline}
\newcommand{\mathendash}{\text{\textendash}}
\newcommand{\Ker}{\on{Ker}}
\newcommand{\Coker}{\on{Coker}}
\newcommand{\End}{\on{End}}
\newcommand{\Hom}{\on{Hom}}
\newcommand{\Aut}{\on{Aut}}
\newcommand{\id}{\on{id}}
\newcommand{\tr}{\on{tr}}
\newcommand{\Rep}{\mathsf{Rep}}
\newcommand{\fil}{\on{fil}}
\newcommand{\gr}{\on{gr}}
\renewcommand{\dot}{\bullet}
\newcommand{\vph}{\varphi}
\newcommand{\vareps}{\varepsilon}
\newcommand{\Sym}{\on{Sym}}
\renewcommand{\mod}{\mathendash\mathsf{mod}}
\newcommand{\bimod}{\mathendash\mathsf{bimod}}
\newcommand{\Proj}{\mathsf{Proj}}
\newcommand{\colim}{\on{colim}}
\newcommand{\StCat}{\mathsf{StCat}}
\newcommand{\Sp}{\mathsf{Sp}}
\newcommand{\fSet}{\mathsf{fSet}}
\newcommand{\Gpd}{\mathsf{Gpd}}
\renewcommand{\lim}{\on{lim}}
\newcommand{\Ind}{\mathsf{Ind}} 
\newcommand{\TwoHom}{\mathsf{Hom}}
\newcommand{\TwoEnd}{\mathsf{End}}
\newcommand{\SqZero}{\on{SqZero}}
\newcommand{\THH}{\on{THH}}
\newcommand{\TC}{\on{TC}}
\newcommand{\TP}{\on{TP}}
\newcommand{\heart}{\heartsuit}
\newcommand{\Oblv}{\on{Oblv}}
\newcommand{\Alg}{\mathsf{Alg}}
\renewcommand{\subset}{\subseteq}
\newcommand{\biggg}{\bBigg@{4}}
\newcommand{\Biggg}{\bBigg@{5}}
\date{\today}
\begin{document}

\frenchspacing

\setlength{\epigraphwidth}{0.4\textwidth}
\renewcommand{\epigraphsize}{\footnotesize}

\title{On the Dundas-Goodwillie-McCarthy theorem}

\author{Sam Raskin}

\address{The University of Texas at Austin, 
Department of Mathematics, 
RLM 8.100, 2515 Speedway Stop C1200, 
Austin, TX 78712}

\email{sraskin@math.utexas.edu}

\begin{abstract}

We give a modern presentation of the Dundas-Goodwillie-McCarthy theorem 
identifying relative $K$-theory and topological cyclic homology for nilpotent
ring extensions. 

\end{abstract}

\maketitle

\setcounter{tocdepth}{1}
\tableofcontents

\section{Introduction}

\subsection{} This paper proves the 
following theorem of 
Dundas-Goodwillie-McCarthy, relating $K$-theory and
topological cyclic homology $\TC$:

\begin{thm}[\cite{dgm} Theorem 7.2.2.1]\label{t:dgm}

For $B \to A$ a morphism of connective $\sE_1$-ring spectra
such that $\pi_0(B) \to \pi_0(A)$ is surjective
with kernel a nilpotent ideal,
the \emph{cyclotomic trace} map $K \to \TC$ 
induces an equivalence of spectra:

\[
\Ker(K(B) \to K(A)) \to \Ker(\TC(B) \to \TC(A)).
\]

\end{thm}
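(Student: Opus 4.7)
The plan is to reduce the theorem to the case of a square-zero extension, then invoke Goodwillie-style functor calculus to compare the linearizations of $K$ and $\TC$ at a fixed base ring. Write $K^{\mathrm{rel}}(M) := \Ker(K(A \oplus M) \to K(A))$ and $\TC^{\mathrm{rel}}(M) := \Ker(\TC(A \oplus M) \to \TC(A))$ for the relative theories of a square-zero extension, viewed as functors of a connective $A$-bimodule $M$.

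\emph{Step 1: Square-zero reduction.} Set $I := \Ker(\pi_0 B \to \pi_0 A)$; the nilpotence hypothesis gives $I^N = 0$ for some $N \geq 1$. I would construct a tower of connective $\sE_1$-rings
\[
B = B_N \to B_{N-1} \to \cdots \to B_1 = A
\]
in which each map $B_{n+1} \to B_n$ is a square-zero extension by a connective bimodule, obtained from the filtration of $B$ by a lift of the powers of $I$. By the fiber sequences that each layer produces and induction on $N$, it suffices to treat $B = A \oplus M$, a trivial square-zero extension of $A$ by a connective $A$-bimodule $M$.

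\emph{Step 2: Analyticity.} Both $K^{\mathrm{rel}}$ and $\TC^{\mathrm{rel}}$ are reduced (they vanish at $M = 0$) as functors of connective $A$-bimodules. I would check, separately on each side, that these functors are Goodwillie-analytic on connective inputs and that their Taylor towers converge. For $\TC^{\mathrm{rel}}$ this is comparatively tractable, since $\THH$ is built from tensor products that are analytic in each variable, and passage to cyclotomic fixed points preserves the necessary uniformity. For $K^{\mathrm{rel}}$, one needs an $\sE_1$-ring version of Goodwillie's original analyticity theorem for $K$-theory on nilpotent inputs, carried out via the $S_\dot$-construction.

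\emph{Step 3: Comparison on linearizations.} Given Step 2, the theorem reduces to the claim that the cyclotomic trace induces an equivalence between the first Taylor approximations of $K^{\mathrm{rel}}$ and $\TC^{\mathrm{rel}}$; higher Taylor layers follow by the same argument applied to cross-effects. The crux is a Dundas-McCarthy-style identification: the linearization of $M \mapsto K(A \oplus M)$ at $M = 0$ is canonically $\THH(A; M)$, with its natural cyclotomic structure, and the cyclotomic trace implements the tautological map at this level. By the fundamental fiber sequence defining $\TC$, the linearization of $M \mapsto \TC(A \oplus M)$ is matched by the same $\THH$ computation, and these compatibilities force the trace to be an equivalence on linearizations, hence on the full relative theories.

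The main obstacle I anticipate is the identification of the linearization of $K$-theory with $\THH$ in a manner compatible with the full cyclotomic structure, rather than merely with the underlying spectrum. Classically this identification is the Dundas-McCarthy theorem, but to deduce an equivalence of \emph{cyclotomic} spectra one must track $S^1$-actions, Frobenius structure, and trace compatibilities simultaneously. A modern, $\infty$-categorical presentation --- as this paper promises --- would concentrate its effort on making this comparison structural, so that the final matching of Taylor towers becomes a consequence of functoriality rather than a hand-computed coincidence.
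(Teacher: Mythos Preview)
Your overall strategy is the right shape, but Step~1 contains a real gap and Step~3 a misconception.

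In Step~1 there are two issues. First, for a connective $\sE_1$-ring $B$ there is no evident way to lift the $I$-adic filtration on $\pi_0 B$ to a tower of $\sE_1$-rings; the paper instead uses \emph{convergence} of $K$ and $\TC$ (meaning $\Psi(A) \simeq \lim_n \Psi(\tau^{\geq -n} A)$) to pass through the Postnikov tower and reduce to discrete rings. Second, and more seriously: even once you have a tower of square-zero extensions $B_{n+1} \to B_n$, these are \emph{not split} in general, yet you conclude ``it suffices to treat $B = A \oplus M$, a trivial square-zero extension.'' That leap is unjustified, and bridging it is exactly where the paper spends the most effort (Proposition~\ref{p:inf} and Theorem~\ref{t:conv/inf}). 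One proves that $K$ and $\TC$ \emph{infinitesimally commute with sifted colimits}, which allows resolving $A$ by free $\sE_1$-algebras --- over which every square-zero extension automatically splits, since $\Ker(m)$ is a free bimodule. For $K$-theory this sifted-colimit property is genuinely hard and requires Volodin's construction (Theorem~\ref{t:volodin}); for $\TC$ it requires a rational comparison (Theorem~\ref{t:rat'l}). None of this appears in your outline.

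In Step~3 you write that the linearization of $K(A \oplus -)$ is $\THH(A;M)$ ``with its natural cyclotomic structure,'' and you flag matching cyclotomic structures as the main obstacle. But $\THH(A;M)$ for a general bimodule $M$ carries no cyclotomic structure; that is special to $M = A$. The correct statement (Theorems~\ref{t:k}~\eqref{i:trace} and~\ref{t:tc}~\eqref{i:tc-deriv}) is that both Goodwillie derivatives are $\THH(A;-)[1]$ as functors to \emph{spectra}, and one checks that the cyclotomic trace induces the identity between them. Cyclotomic structure enters only in \emph{computing} the derivative of $\TC$ (via Theorem~\ref{t:tc-calc}), not in the comparison itself. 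So the obstacle you anticipate is a red herring; the real one is the non-split-to-split reduction you skipped in Step~1.
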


Here we use $\sE_1$ to refer to the appropriate notion of associative
algebra in the homotopical setting; $A_{\infty}$ and \emph{highly structured}
are common synonyms.  
We refer to \cite{nikolaus-scholze} for an introduction to topological
cyclic homology. 

As we discuss at greater length below in \S \ref{ss:dgm-comparison}, 
one of the main purposes of this note is to simplify the original arguments
of Dundas-Goodwillie-McCarthy through systematic use of Lurie's 
$\infty$-categorical methods (c.f. \cite{htt} and \cite{higheralgebra})
and using the recent ideas of Nikolaus-Scholze.

\begin{rem}

Theorem \cite{dgm} has a long history. 
To the extent that
cyclic homology is an avatar for de Rham (or crystalline)
cohomology, its origin is in \cite{bloch}.

Goodwillie proved a rationalized
version of Theorem \ref{t:dgm} in \cite{goodwillie}. The connection between
$K$-theory and topological Hochschild homology was first
proved in \cite{dundas-mccarthy} and \cite{dundas-mccarthy-erratum};
see also \cite{ssw}.
The cyclotomic trace\footnote{We remind that this is a natural map
$K(A) \to \TC(A)$ for any $\sE_1$-algebra $A$. More generally, this
map is defined for any essentially small stable category $\sC$ (and
in particular is Morita invariant).}
was constructed in \cite{bhm}
(though see \cite{bgt} for a simpler construction).
A $p$-adic version of Theorem \ref{t:dgm} was proved in \cite{mccarthy}.
Lindenstrauss-McCarthy proved the theorem for 
split square-zero extensions (and usual rings)
in \cite{lindenstrauss-mccarthy}. 

For more recent developments, see \cite{cmm} and its application in \cite{bms2} \S 7.

\end{rem}

\begin{rem}

It is not immediately clear how to deduce Goodwillie's 
rationalized version \cite{goodwillie} of Theorem \ref{t:dgm}
from Theorem \ref{t:dgm} (because $\TC$ does not generally commute
with tensor products).
Our Theorem \ref{t:rat'l} provides the relevant comparison. 

Similarly, Beilinson \cite{beilinson} proved a rationalized
version of Theorem \ref{t:dgm} in a $p$-adic setting: we refer
to \emph{loc. cit}. for the formulation.
Again, Beilinson's theorem does not obviously
follow from Dundas-Goodwillie-McCarthy. Forthcoming  
work of Ben Antieau, Akhil Mathew, and Thomas Nikolaus
explains the deduction of Beilinson's theorem from 
Theorem \ref{t:dgm}.
 
\end{rem}

\subsection{Outline of the argument}\label{ss:pf-outline}

The proof of Theorem \ref{t:dgm} 
is quite striking: one shows that the cyclotomic trace
induces an isomorphism on \emph{Goodwillie derivatives}, and then
formally deduces Theorem \ref{t:dgm} 
from \emph{structural properties} of the functors $K$-theory and $\TC$.

Let us explain what we mean here in more detail. 
Fix a connective $\sE_1$-algebra $A$.
Then for any $A$-bimodule $M$, we can form the split square-zero extension 
$A \oplus M$ and take its $K$-theory. 
The induced functor $A\bimod \to \Sp$ is not additive for a stupid
reason: it does not map $0$ to $0$. But even the less naive functor
$M \mapsto \Ker(K(A\oplus M) \to K(A))$ is not additive.
Goodwillie's derivative construction stabilizes this latter
construction to produce a new functor that does commute with
direct sums, and in fact, all colimits. This Goodwillie derivative
of $K$-theory is often called \emph{stable $K$-theory}, and was
first studied by Waldhausen in \cite{waldhausen-stable}.

The theorem \cite{dundas-mccarthy} of Dundas-McCarthy in fact
identifies stable $K$-theory up to a cohomological shift
with the functor $\THH(A,-):A\bimod \to \Sp$.
Here $\THH$ denotes \emph{topological Hochschild homology}
(which is possible meaning of Hochschild homology
in the setting of spectra).

The same constructions may be applied to $\TC$. One again
shows that ``stable $\TC$" coincides with $\THH$ up to shift,
and that this identification is compatible with the cyclotomic trace;
in particular, the cyclotomic trace induces an isomorphism on Goodwillie 
derivatives.
(The former result is due to Hesselholt \cite{hesselholt-stable-tc},
while the compatibility with the cyclotomic trace map
seems to be due to \cite{lindenstrauss-mccarthy} \S 11.) 

The \emph{structural properties} we referred to are results about 
$K$-theory and $\TC$ commuting with certain colimits and 
limits and having some cohomological
boundedness properties. 
The key point is that these structural properties are features of the 
two functors considered separately. 

To reiterate: we are proving a theorem
about the \emph{fiber} of the cyclotomic trace map
$K \to \TC$, which looks like a quite subtle object,
it suffices to know its Goodwillie derivative and
certain structural properties of $K$-theory and $\TC$ independently.

\subsection{}

The method for reducing Theorem \ref{t:dgm} to its stabilized form
is called \emph{Goodwillie calculus}. 
The interested reader may refer to \cite{goodwillie-icm} for
an overview of this subject, and \cite{higheralgebra} \S 6
for a thorough treatment of the subject; however, the 
present note is self-contained in terms of what we
use from Goodwillie calculus.

\subsection{What do we need to know about $K$-theory and $\TC$?}

There is a remarkable asymmetry in how $K$-theory and $\TC$ are treated.

In the proof of Theorem \ref{t:dgm}, we essentially never
calculate anything about $K$-theory.
The argument that its derivative is $\THH$ up to shift is quite soft
(see the proof of Theorem \ref{t:k-deriv}). Establishing sufficient
structural properties of $K$-theory to obtain Theorem \ref{t:dgm}
for split square-zero extensions is also not terribly difficult (and is contained
in \S \ref{s:k}). For the general form of Theorem \ref{t:dgm}, 
some more subtle methods are needed to establish the
necessary structural properties (see Theorem \ref{t:volodin}).

In contrast, our knowledge of $\TC$ is based almost entirely around an explicit
formula for $\TC$ in the case of split square-zero extensions: 
see Theorem \ref{t:tc-calc}.

This disparity indicates the well-established strength of
Theorem \ref{t:dgm} for $K$-theory calculations.

\subsection{Comparison with \cite{dgm}}\label{ss:dgm-comparison}

The exposition here closely follows \cite{dgm} at some points,
but departs from it in several notable respects.

First, our treatment of topological cyclic homology 
follows the recent work of \cite{nikolaus-scholze}, which
clarifies the construction of $\TC$.
These notes assume
some substantial familiarity with their approach to $\TC$, and we take
Nikolaus and Scholze's constructions as definitions. 
This allows us to circumvent equivariant homotopy theory here 
(although we sometimes draw notation from that subject).

We also give a somewhat streamlined approach to 
Goodwillie's calculus of functors in \S \ref{s:goodwillie}.
In particular, we avoid (or at least mask) the analysis of cubes 
that appears in \cite{dgm} \S 7.2.1.2. 

In addition, we use Lurie's higher methods
(\cite{htt} and \cite{higheralgebra}), which
provide simpler homotopical foundations than those used in 
\cite{dgm}. In particular, Lurie's approach
makes it routine to treat $\sE_1$-algebras more directly than in \cite{dgm},
where the authors avoid certain homotopy coherence questions
by reducing to simplicial algebras with some
cost to the conceptual clarity. We treat
general connective $\sE_1$-algebras on equal footing with
classical (alias: discrete) rings, and have avoided reducing
problems to (non-topological) Hochschild homology for rings.

In particular, we use higher categorical methods to 
directly apply Goodwillie's calculus to functors
from $A$-bimodules to spectra.

As a final difference with \cite{dgm}, which may be
more neutral than an improvement, we
have also chosen to work in an abstract categorical
setting where possible. That is, where possible we work with 
(suitable) categories $\sC$ that would be $A\mod$ in cases of interest.
This is not because it is so important for applications 
to work in this generality, but because the author personally finds this
setting to be clarifying, and for the sake of diversifying the literature
somewhat. 
Other expositions tend to work
directly with algebras and their modules, and the reader
may readily find arguments in that more familiar language 
in the literature.

\subsection{Structure of these notes}

In \S \ref{s:goodwillie}-\ref{s:tc}, we treat the split square-zero case of
Theorem \ref{t:dgm}. In \S \ref{s:goodwillie}, we explain how to reduce
the theorem in this case to the comparison of derivatives and structural
properties of these functors. In \S \ref{s:k} and \ref{s:tc}, we prove the
corresponding facts about $K$-theory and $\TC$. 

Finally, in \S \ref{s:gen'l}, we axiomatize the additional structural
facts about $K$-theory and $\TC$ needed for the general
form of Theorem \ref{t:dgm}, and then we establish these features. 

\subsection{Categorical notions}

We systematically use higher category theory and higher algebra in our
treatment, following \cite{htt} and \cite{higheralgebra}. 
We find it convenient to avoid ``higher" terminology everywhere,
so all terminology should be understood in its homotopical form: 
\emph{category} means \emph{$(\infty,1)$-category}, \emph{colimit} means
\emph{homotopy colimit}, and so on. In this spirit, we 
refer to the stable $\infty$-categories of \cite{higheralgebra} simply
as \emph{stable categories}.

We let $\Gpd$ denote the category of (higher) groupoids,
i.e., ``spaces" in more standard homotopy-theoretic language.

We casually use an equals sign between two objects in a category
to indicate the presence of a (hopefully) clear isomorphism. 

\subsection{}

For a category $\sC$ and $\sF,\sG \in \sC$, 
we let $\Hom_{\sC}(\sF,\sG) \in \Gpd$ denote
the groupoid of maps from $\sF$ to $\sG$.

For categories $\sC$ and $\sD$, we let $\TwoHom(\sC,\sD)$ denote
the category of functors from $\sC$ to $\sD$. More generally,
use $\TwoHom$ to indicate the category of 
functors in a $2$-category (meaning $(\infty,2)$-category).

\subsection{}

For $\sC$ stable and $\sF \in \sC$, we use the notations
$\sF[1]$ and $\Sigma \sF$ (resp. $\sF[-1]$ and $\Omega\sF$) interchangeably,
capriciously choosing between the two.  

Following the above conventions, for
$f: \sF \to \sG \in \sC$, we let $\Ker(f)$ denote
the \emph{homotopy} kernel (or \emph{fiber}) of $f$, and let
$\Coker(f)$ denote the \emph{homotopy} cokernel (or \emph{cofiber}, or \emph{cone}) of $f$.
Recall that $\Ker(f)[1] = \Coker(f)$.

For a $t$-structure on $\sC$, we use cohomological notation: 
$\sC^{\leq 0} \subset \sC$ are the connective objects, and $\sC^{\geq 0}$ are
the coconnective objects. We let $\tau^{\leq n}$ and $\tau^{\geq n}$ denote
the corresponding truncation functors. 

\subsection{}

Let $\StCat_{cont}$ denote the category of cocomplete\footnote{We omit set-theoretic
considerations. So \emph{cocomplete} should be taken to mean \emph{presentable}.
Similarly, all functors between accessible categories will themselves be assumed
accessible (i.e., wherever this hypothesis is reasonable we assume it).
When we refer to commutation with all colimits, we mean small colimits.}
stable categories under continuous\footnote{We say a 
functor is \emph{continuous} if it commutes with filtered
colimits.}
exact functors (i.e.,
functors commuting with 
all colimits).

We recall from \cite{higheralgebra} \S 4.8 that $\StCat_{cont}$
has a symmetric monoidal structure $\otimes$. The basic property is that
a functor $\sC \otimes \sD \to \sE \in \StCat_{cont}$ is equivalent to a functor
$\sC \times \sD \to \sE$ that commutes with colimits in each variable separately.
We denote the canonical (non-exact!) functor $\sC \times \sD \to \sC \otimes \sD$
by $(\sF,\sG) \mapsto \sF \boxtimes \sG$.

We use this tensor product quite substantially. We refer the reader to
\cite{grbook} Chapter I.1 for the relevant background material.

\subsection{} 

We let $\Sp \in \StCat_{cont}$ denote the category of spectra,
which we recall is the unit for the symmetric monoidal structure on $\StCat_{cont}$.
We let $\otimes: \Sp \times \Sp \to \Sp$ denote the tensor (alias: smash) product,
and we let $\bS \in \Sp$ denote the sphere spectrum.
We have the standard adjunction $\Sigma^{\infty}:\Gpd \rightleftarrows \Sp:\Omega^{\infty}$.

For $\sC$ a stable category and $\sF,\sG \in \sC$, we let
$\ul{\Hom}_{\sC}(\sF,\sG)$ denote the spectrum of maps from 
$\sF$ to $\sG$. We use $\Hom_{\sC}(\sF,\sG)$ to mean the
groupoid of maps in the category $\sC$, forgetting it was stable.
In other words, $\Hom_{\sC}(\sF,\sG) = \Omega^{\infty}\ul{\Hom}_{\sC}(\sF,\sG)$.

\subsection{}

We say $\sC \in \StCat_{cont}$ is \emph{dualizable} if it is dualizable with
respect to the above symmetric monoidal structure. We let the dual
category be denoted $\sC^{\vee}$. 

In this case, for any $\sD \in \StCat_{cont}$ we have
$\sC^{\vee} \otimes \sD \isom \TwoHom_{\StCat_{cont}}(\sC,\sD)$.
In particular, $\sC^{\vee} = \TwoHom_{\StCat_{cont}}(\sC,\Sp)$. 
Moreover, we see that there is an evaluation map 
$\TwoEnd_{\StCat_{cont}}(\sC) = \sC\otimes \sC^{\vee} 
\to \Sp \in \StCat_{cont}$,
which we denote by $\tr_{\sC}$ and refer to as the \emph{trace}.

For $\sF \in \sC$ a \emph{compact} object, the functor (by fiat) 
$\ul{\Hom}_{\sC}(\sF,-):\sC \to \Sp$ commutes with colimits, so defines
an object of $\sC^{\vee}$. We denote this object by $\bD \sF$.

\begin{example}

If $\sC$ is compactly generated, then $\sC$ is dualizable. Explicitly,
if $\sC^c \subset \sC$ is the (essentially small) subcategory of
compact objects in $\sC$, then $\Ind(\sC^{c,op}) = \sC^{\vee}$ 
(for $\Ind$ denoting the ind-category), where the underlying functor
$\sC^{c,op} \to \sC^{\vee}$ is the map $\sF \mapsto \bD \sF$ above.
(See \cite{grbook} Chapter I.1 for more details.)

\end{example}

\begin{example}

For $\sC = A\mod$ in the above, note that $A\mod$ is compactly generated
by perfect $A$-modules, so $A\mod$ is dualizable. Moreover,
duality gives a contravariant equivalence between left and right $A$-modules,
so $A\mod^{\vee} = A^{op}\mod$ (modules over $A$ with the opposite multiplication).

One then obtains a standard (Morita-style) identification
$\TwoEnd_{\StCat_{cont}}(A\mod) = A\mod \otimes A^{op}\mod = A\bimod$.
The trace map constructed above then corresponds to (topological) Hochschild
homology.

\end{example}
 
\subsection{}

We frequently reference \emph{sifted} colimits,
which may not be familiar to all readers. We review
the theory briefly here, referring to \cite{htt} \S 5.5.8
for proofs.

A category $I$ is \emph{sifted} if it is non-empty and $I \to I \times I$ is
cofinal. A functor commutes with sifted colimits if and only if
it is continuous and commutes with geometric realizations (i.e., colimits
of simplicial diagrams). 
A functor $F$ commutes with all colimits 
if and only if it commutes with sifted colimits and finite coproducts.

For $\sC \in \StCat_{cont}$, the functors
$\sC \xar{\sF \mapsto \sF^{\boxtimes n}} \sC^{\otimes n}$
are typical examples of functors
that commute with sifted colimits but not (outside obvious
exceptional cases) general colimits.

\subsection{Acknowledgements}

These notes were written to supplement a talk I gave in the
Arbeitsgemeinschaft on topological cyclic homology at MFO in April 2018.
I am grateful to the organizers for the opportunity to learn this subject and to
speak about it.

I also would like to thank Sasha Beilinson, Lars Hesselholt,
Akhil Mathew, and Thomas Nikolaus
for helpful discussions on these subjects, for
encouragement to write these notes, and for their comments and corrections.

\section{Goodwillie calculus}\label{s:goodwillie}

\subsection{}

This goal for this section is to setup the proof of 
Theorem \ref{t:dgm} in the split square-zero case.
We end the section by formulating Theorems \ref{t:k} and \ref{t:tc},
which are about $K$-theory and $\TC$ respectively; although the
proofs of these theorems are deferred to later sections,
we deduce the split square-zero case of Theorem \ref{t:dgm}
from them in \S \ref{ss:split}.

However, most of this section is devoted to some 
formal vanishing results, allowing us to carry out the
strategy indicated in \S \ref{ss:pf-outline}.
Our main results here are Theorem \ref{t:conn-algs}
and Corollary \ref{c:full-vanishing}; the former is a toy model
for the latter.
The arguments are essentially the same 
in the two cases, but Theorem \ref{t:conn-algs} is technically simpler
to formulate, and its proof essentially 
leads to the formulation of the more technical Corollary \ref{c:full-vanishing}. 

\begin{rem}

We have sought a minimalist approach to this material
and have omitted many lovely aspects of Goodwillie's theory
here: most notably, higher derivatives, the Goodwillie tower, and the analogy
with calculus. This makes our treatment somewhat non-standard, and
we refer the reader to the extensive literature in this subject
(for example, \cite{goodwillie-icm} and \cite{higheralgebra} \S 6)
for a more thorough approach.

\end{rem}

\begin{rem}

The next comments are intended for the reader who wishes to tighten the connection
between this section and Goodwillie's theory.
Theorems \ref{t:diff'l} and \ref{t:full-vanishing}, which provide hypotheses
for a functor to vanish, can be deduced from Goodwillie's theory by standard methods.
Indeed, one would prove that the Goodwillie tower converges for functors
satisfying these hypotheses (c.f. Remark \ref{r:analytic}), 
and that the hypotheses imply that
all higher Goodwillie derivatives vanish.

The methods we use below in proving these theorems 
are not so different from standard ones in Goodwillie's theory.
However, the Goodwillie tower and higher derivatives take some work to
set up, so we prefer to circumvent these constructions. 

\end{rem}

\subsection{A convention}\label{ss:convention}

The reader may safely skip the present discussion.

Throughout this section, we generally consider functors
between stable categories (or their subcategories) 
commuting with sifted colimits.
This choice is not at all because this assumption is essential.
These assumptions can be significantly relaxed, and we refer
to \cite{higheralgebra} \S 6 for an approach with minimal hypotheses.
Note, for example, that the discussion of \S \ref{ss:goodwillie-deriv}
carries through as is if we work with functors commuting
with $\bZ^{\geq 0}$-indexed colimits.

We include this hypothesis because the commutation with geometric
realizations is essential in our approach;
the additional commutation with all filtered colimits is fairly minor;
and the commutation with sifted colimits motivates the
discussion of \S \ref{ss:bilinear}.

\subsection{The Goodwillie derivative}\label{ss:goodwillie-deriv}

Suppose $\psi:\sC \to \sD$ is a functor between cocomplete stable categories
commuting with sifted colimits.
The \emph{Goodwillie derivative}
$\partial \psi:\sC \to \sD$ of $\psi$ is initial
among continuous exact functors receiving a natural
transformation from $\psi$.

\begin{rem}

By \cite{higheralgebra} Proposition 1.4.2.13,
$\partial \psi$ is calculated as follows.

First, observe that for any $\sF \in \sC$, 
the morphisms $0 \to \sF \to 0$ give a functorial direct sum
decomposition $\psi(\sF) = \psi_{red}(\sF) \oplus \psi(0)$.
Then $\psi_{red}$ is \emph{reduced}, i.e., it takes
$0 \in \sC$ to $0 \in \sD$.

Then there is a canonical natural transformation 
$\Sigma \circ \psi_{red} \to \psi_{red} \circ \Sigma$, or 
equivalently,
$\psi_{red} \to \Omega \psi_{red} \Sigma$. Finally,
we have:

\[
\partial \psi = {\colim} \, 
\Big( \psi_{red} \to \Omega \psi_{red} \Sigma \to 
\Omega^2 \psi_{red} \Sigma^2 \to \ldots \Big) 
\] 

\end{rem}

We will actually use this construction in slightly more generality.

\begin{variant}

Suppose $\sC$ is equipped with a $t$-structure compatible with
filtered colimits and we are given $\psi:\sC^{\leq 0} \to \sD$ commuting
with sifted colimits. Then there is again a functor 
$\partial \psi: \sC \to \sD \in \StCat_{cont}$ 
initial among functors commuting with colimits and receiving
a natural transformation $\psi \to (\partial \psi)|_{\sC^{\leq 0}}$.

To construct $\partial \psi$ in this setup, note that
$\psi_{red}$ makes sense as before, and then one has:

\[
\partial \psi(\sF) = \underset{m}{\colim} \, \underset{n\geq m}{\colim} \,
\Omega^n\psi_{red}(\Sigma^n\tau^{\leq m}\sF)
\]

\end{variant}

If the $t$-structure on $\sC$ is right complete and
$\psi:\sC \to \sD$ commutes with sifted colimits,
then $\partial \psi$ in the previous sense
coincides with $\partial(\psi|_{\sC^{\leq 0}})$.

\subsection{Notation}

We let $\Alg$ denote the category of $\sE_1$-algebras.
We let $\Alg_{conn} \subset \Alg$ denote the subcategory of 
connective $\sE_1$-algebras. 

Recall that for
$A \in \Alg$ and $M$ an $A$-bimodule,
we can form the \emph{split square-zero extension} $\SqZero(A,M) \in \Alg$,
whose underlying spectrum is $A\oplus M$. 

\subsection{}

The following result is a first approximation to the main result of this
section. 

\begin{thm}\label{t:conn-algs}

Let $\Psi: \Alg_{conn} \to \Sp$ be a functor.

Suppose that for every $A \in \Alg_{conn}$, the
functor:

\[
\begin{gathered}
\Psi_A:A\bimod^{\leq 0} \to \Sp \\
M \mapsto \Psi(\SqZero(A,M))
\end{gathered}
\]

\noindent
commutes with sifted colimits and
has vanishing Goodwillie derivative. 
Suppose moreover that its underlying reduced functor 
$\Psi_{A,red}$ maps $A\bimod^{\leq 0}$
to $\Sp^{\leq 0}$.

Then for every $A \in \Alg_{conn}$ and $M \in A\bimod^{\leq -1}$,
the map $\Psi(\SqZero(A,M)) \to \Psi(A)$ is an isomorphism.

\end{thm}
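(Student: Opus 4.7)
The plan is to show $\Psi_{A,red}(M) = 0$ for $M \in A\bimod^{\leq -1}$, which is equivalent to the conclusion since the inclusion $A \hookrightarrow \SqZero(A,M)$ gives a splitting $\Psi_A(M) = \Psi(A) \oplus \Psi_{A,red}(M)$. Write $\psi := \Psi_{A,red}: A\bimod^{\leq 0} \to \Sp^{\leq 0}$; by hypothesis it is reduced, commutes with sifted colimits, and has $\partial\psi = \partial\Psi_A = 0$.

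The first step is to unwind the derivative formula on a fixed $M \in A\bimod^{\leq -1}$. Since $\tau^{\leq m}M = M$ for all $m \geq -1$, the variant colimit formula for $\partial$ from \S\ref{ss:goodwillie-deriv} collapses into a single sequential colimit:
\[
0 \;=\; \partial\Psi_A(M) \;=\; \underset{n \geq 0}{\colim}\,\Omega^n\psi(\Sigma^n M),
\]
with transitions given by the canonical comparison $\Sigma\psi(X) \to \psi(\Sigma X)$ obtained by applying $\psi$ to the pushout square $\{X \to 0 \leftarrow 0\}$ in $A\bimod$, shifted and evaluated at $X = \Sigma^n M$.

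The crux is a connectivity bootstrap. The vanishing of $\partial\psi$, combined with reducedness, should force $\psi$ to be ``of order at least $2$'' in a quantitative sense: one expects that $\psi$ carries $A\bimod^{\leq -k}$ into a strictly more connective subcategory than $\Sp^{\leq -k}$, ideally doubling connectivity to $\Sp^{\leq -2k}$. I would prove such an estimate by induction on $k$: the inductive step examines the natural map $\psi(N) \to \Omega\psi(\Sigma N)$ for $N$ highly connective, identifies its fiber (up to shift) with a second-order cross-effect of $\psi$, and uses the vanishing of the linear derivative to argue that this cross-effect gains extra connectivity on connective inputs. The inductive hypothesis controls $\Omega\psi(\Sigma N)$, and combining the two estimates controls $\psi(N)$.

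With the bootstrap in hand, the tower $(\Omega^n\psi(\Sigma^n M))_{n \geq 0}$ has terms whose connectivity tends to $-\infty$ while the colimit vanishes; a careful comparison of the transition maps on homotopy groups then pins down $\psi(M) = 0$. The main obstacle is the bootstrap itself: the hypothesis $\partial\psi = 0$ is a formal ``colimit vanishing'' statement, and converting it into an explicit, uniform connectivity estimate on the finite stages of the comparison tower is the technical heart of the argument. It requires non-trivial interplay between the $t$-structures on $A\bimod$ and $\Sp$ (both compatible with filtered colimits), and essentially reflects the general principle—central to Goodwillie's calculus—that a reduced functor with vanishing linear derivative behaves quadratically on highly connective inputs.
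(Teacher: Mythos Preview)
Your proposal has a genuine gap: you only invoke the vanishing of $\partial\Psi_A$ at the \emph{fixed} algebra $A$, but the hypothesis of the theorem is that $\partial\Psi_{A'} = 0$ for \emph{every} $A' \in \Alg_{conn}$. The paper's proof uses this crucially by applying the hypothesis at $A' = \SqZero(A,M)$: since $\SqZero(\SqZero(A,M),N) = \SqZero(A, M \oplus N)$, one obtains $\partial\big(\psi(M \oplus -)\big) = 0$ for every $M \in A\bimod^{\leq 0}$, which is strictly stronger than $\partial\psi = 0$. This stronger input is exactly the hypothesis of the paper's Theorem~\ref{t:diff'l}, and it is what makes the inductive connectivity argument go through (it is needed to control the cross-effects $B_\psi(\sF,-)$, which must themselves satisfy an analogous derivative-vanishing condition).

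Your weaker input is simply not enough. Take $A = \bS$ (so $A\bimod = \Sp$) and $\psi(M) = M \otimes M$. This functor is reduced, commutes with sifted colimits, maps $\Sp^{\leq 0}$ to $\Sp^{\leq 0}$, and has $\partial\psi = 0$ (the transition maps $M^{\otimes 2} \to \Omega(\Sigma M)^{\otimes 2} \simeq \Sigma M^{\otimes 2}$ are nullhomotopic). Yet $\psi(\bS[-1]) = \bS[-2] \neq 0$. In particular, your final step---``the tower has terms of connectivity tending to $-\infty$ with vanishing colimit, so $\psi(M)=0$''---fails outright here: the transition maps are all zero, so the vanishing of the colimit says nothing about the initial term. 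Note that $\psi(M\oplus -)$ has derivative $N \mapsto M\otimes N \oplus N \otimes M$, which is nonzero; so this example correctly fails the paper's stronger hypothesis. The bootstrap you sketch cannot succeed without exploiting the hypothesis across varying base algebras.
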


\begin{rem}

In this result, $\Sp$ may be replaced by any $\sD \in \StCat_{cont}$
with a left separated $t$-structure.

\end{rem}

\subsection{A categorical variant}

We will deduce Theorem \ref{t:conn-algs}
from the following result.

\begin{thm}\label{t:diff'l}

Let $\sC$ and $\sD$ be cocomplete stable categories
equipped with $t$-structures compatible with filtered colimits, 
and suppose the $t$-structure
on $\sD$ is left separated.
Let $\psi:\sC^{\leq 0} \to \sD^{\leq 0}$ be a 
reduced functor commuting with sifted colimits.
Suppose that\footnote{For clarity:
the notation indicates the Goodwillie derivative
of the functor $\sG \mapsto \psi(\sF \oplus \sG)$.}
$\partial(\psi(\sF \oplus -)) = 0$
for every $\sF \in \sC^{\leq -1}$.

Then $\psi(\sF) = 0$ for every $\sF \in \sC^{\leq -1}$.

\end{thm}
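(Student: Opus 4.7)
The plan is to show, by induction on $k \geq 0$, that every functor $\phi : \sC^{\leq 0} \to \sD^{\leq 0}$ satisfying the hypotheses of the theorem satisfies $\phi(\sF) \in \sD^{\leq -k}$ for every $\sF \in \sC^{\leq -1}$; left separatedness of $\sD$ then forces $\phi(\sF) = 0$. The base case $k=0$ is immediate. The induction is strong, applied uniformly to the class $\cF$ of all such functors, and relies on two closure properties: (a) for every $\sF \in \sC^{\leq -1}$, the cross-effect $\phi_{\sF}(\sG) := \Ker(\phi(\sF \oplus \sG) \to \phi(\sF))$ lies in $\cF$; and (b) the stabilization fiber $\theta(\phi)(\sG) := \Ker(\phi(\sG) \to \Omega \phi(\Sigma \sG))$ lies in $\cF$. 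Both follow by direct manipulation of the formula for the Goodwillie derivative, using that forming fibers commutes with the filtered colimits appearing there.

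Assume the inductive hypothesis $P(k)$. I would first establish, by a secondary induction on $j \geq 0$, the auxiliary refinement: $\phi(\sG) \in \sD^{\leq -k-j}$ for every $\phi \in \cF$ and every $\sG \in \sC^{\leq -1-j}$. The step $j-1 \to j$ writes $\sG = \Sigma \sG'$ with $\sG' \in \sC^{\leq -1-(j-1)}$ and invokes the fiber sequence $\theta(\phi)(\sG') \to \phi(\sG') \to \Omega \phi(\sG)$; induction places the first two terms in $\sD^{\leq -k-(j-1)}$, and closure of this subcategory under cofibers then places $\Omega\phi(\sG)$ there, giving $\phi(\sG) \in \sD^{\leq -k-j}$. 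Next, since $\partial \phi = 0$ (the $\sF = 0$ instance of the hypothesis), one has $\phi(\sF) = \colim_n T_n(\sF)$ with $T_n(\sF) := \Ker(\phi(\sF) \to \Omega^n \phi(\Sigma^n \sF))$, and because $\sD^{\leq -k-1}$ is closed under filtered colimits (compatibility of the $t$-structure), it suffices to show each $T_n \in \sD^{\leq -k-1}$. The fiber sequences $T(\sF) \to T_{n+1}(\sF) \to \Omega T_n(\Sigma \sF)$, together with the auxiliary claim applied to $T_n \in \cF$ (giving $\Omega T_n(\Sigma \sF) \in \sD^{\leq -k-1}$), reduce the problem by induction on $n$ to the single case $T(\sF) := T_1(\sF) \in \sD^{\leq -k-1}$.

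The hard part will be this final case. Directly, the strong inductive hypothesis applied to $\theta(\phi) = T \in \cF$ yields only $T(\sF) \in \sD^{\leq -k}$, which is off by one. To gain the extra degree of connectivity, my plan is to identify $T(\sF)$ with a quantity controlled by the auxiliary claim applied to the cross-effect functor $\phi_{\sF} \in \cF$---for instance with a shift of $\Omega \phi_{\sF}(\Sigma \sF)$, which lies in $\sD^{\leq -k-1}$ since $\Sigma \sF \in \sC^{\leq -2}$. Such an identification must leverage the full multi-variable strength of the hypothesis (rather than only $\partial \phi = 0$) by relating the total fiber of the $\phi$-image of the bicartesian square $\sF \to 0 \leftarrow 0 \to \Sigma \sF$ to the second cross-effect appearing in the splitting $\phi(\sF \oplus \Sigma \sF) = \phi(\sF) \oplus \phi(\Sigma \sF) \oplus \on{cr}_2(\sF, \Sigma \sF)$.
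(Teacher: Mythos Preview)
Your inductive framework is correct and matches the paper's, but there is a recurring off-by-one error and a genuine gap at the ``hard part.''

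The off-by-one error appears in several places. Your claim that the auxiliary refinement gives $\Omega T_n(\Sigma\sF)\in\sD^{\leq -k-1}$ is false: the auxiliary claim with $j=1$ gives $T_n(\Sigma\sF)\in\sD^{\leq -k-1}$, but applying $\Omega$ loses a degree, so you only get $\sD^{\leq -k}$. The same slip occurs in your hope that $\Omega\phi_{\sF}(\Sigma\sF)\in\sD^{\leq -k-1}$. So the reduction from $T_n$ to $T_1$ does not close as written. The paper sidesteps this entirely: once one knows $T(\sF)\in\sD^{\leq -k-1}$, the map $\phi(\sF)\to\Omega\phi(\Sigma\sF)$ is an isomorphism on $H^{-k}$; since each $\Omega^m\phi\Sigma^m$ satisfies the same hypotheses, iterating gives $H^{-k}(\phi(\sF))\simeq H^{-k}(\partial\phi(\sF))=0$ directly, with no need to bound each $T_n$.

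The real gap is the ``hard part'' $T(\sF)\in\sD^{\leq -k-1}$. The total fiber of your square is $T(\sF)$, but it is not controlled by $\on{cr}_2(\sF,\Sigma\sF)$: since $\Sigma^2 T(\sF)=\Coker\bigl(\Sigma\phi(\sF)\to\phi(\Sigma\sF)\bigr)$, it involves \emph{all} higher cross-effects of $\phi$ at $\sF$. The paper handles this with a simplicial filtration: writing $\Sigma\sF=|\sF^{\oplus\bullet}|$ and using that $\phi$ commutes with geometric realizations, one filters $\phi(\Sigma\sF)$ with $\gr_1=\Sigma\phi(\sF)$, $\gr_2=B_\phi(\sF,\sF)[2]$, and $\gr_i$ a summand of $\phi(\sF^{\oplus i})[i]$ for $i\geq 3$. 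The outer induction already places the $i\geq 3$ terms in $\sD^{\leq -k-i}\subset\sD^{\leq -k-3}$. For $\gr_2$ the key move---which your class $\cF$ misses---is that the \emph{shifted} functor $B_\phi(\sF,-)[-1]$ (not $\phi_{\sF}$) lies in $\cF$ for $\sF\in\sC^{\leq -1}$, so the outer induction gives $B_\phi(\sF,\sF)\in\sD^{\leq -k-1}$ and hence $\gr_2\in\sD^{\leq -k-3}$. Your functor $\phi_{\sF}=\phi\oplus B_\phi(\sF,-)$ lacks this shift and only yields $B_\phi(\sF,\sF)\in\sD^{\leq -k}$, one degree short. (The same filtration is also what you need to check that $\theta(\phi)$ actually lands in $\sD^{\leq 0}$; the naive fiber-sequence bound is again off by one, so your closure property (b) is not justified by ``direct manipulation of the derivative formula'' alone.)
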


\begin{proof}[Proof that Theorem \ref{t:diff'l} implies Theorem \ref{t:conn-algs}]

Fix $A \in \Alg_{conn}$ and define $\psi:A\bimod^{\leq 0} \to \Sp$ 
as $\Psi_{A,red}$. 
We claim that the hypotheses of Theorem \ref{t:diff'l} are satisfied.

There are explicit assumptions in Theorem \ref{t:conn-algs} that
$\psi$ commutes with sifted colimits and maps into $\Sp^{\leq 0}$.

We need to show that for $M \in A\bimod^{\leq -1}$,
the Goodwillie derivative of $\psi(M \oplus -)$ is zero;
in fact, we will show this for $M \in A\bimod^{\leq 0}$.
For $M = 0$, this is an assumption.
In general, we have:

\[
\psi(M \oplus -) \oplus \Psi(A) = \Psi(\SqZero(A,M\oplus -)) =
\Psi(\SqZero(\SqZero(A,M),-)).
\]

\noindent The latter functor has vanishing derivative by the
hypothesis that the Goodwillie derivative of
$\Psi_{\SqZero(A,M)}$ vanishes.

\end{proof}

\begin{rem}

Akhil Mathew communicated the following example to us,
showing that Theorem \ref{t:diff'l} is sharp. Let 
$\sC = \sD = \bF_p\mod$ be the categories of $\bF_p$-vector
spaces for $p$ some prime. We will construct a non-zero functor
$\bF_p\mod^{\leq 0} \to \bF_p\mod^{\leq 0}$ which
satisfies the hypotheses of Theorem \ref{t:diff'l}.

For $V \in \bF_p\mod^{\heart}$, let
$\Sym^{\on{perf}}(V)$ denote the perfection of the symmetric algebra
on $V$, i.e., $\colim_n \Sym(V)$ with Frobenius as structure
maps. By \cite{higheralgebra} Theorem 1.3.3.8, there is a unique
functor $\bL\Sym^{\on{perf}}(-):\bF_p\mod^{\leq 0} \to \bF_p\mod^{\leq 0}$
commuting with sifted colimits and whose restriction to
$\bF_p\mod^{\heart}$ is $\Sym^{\on{perf}}(-)$. 

By \cite{bhatt-scholze-grg} Proposition 11.6,
$\bL\Sym^{\on{perf}}(-)$ vanishes on $\bF_p\mod^{\leq -1}$;
in particular, its Goodwillie derivative vanishes.
Then the evident formula:

\[
\bL\Sym^{\on{perf}}(V \oplus W) = 
\bL\Sym^{\on{perf}}(V) \otimes \bL\Sym^{\on{perf}}(W)
\]

\noindent implies that for every 
$V \in \bF_p\mod^{\leq 0}$, the functor
$\big(W \mapsto \bL\Sym^{\on{perf}}(V \oplus W)\big)$
also has vanishing Goodwillie derivative.

\end{rem}

\subsection{The bilinear obstruction to linearity}\label{ss:bilinear}

To prove Theorem \ref{t:diff'l},
it is convenient to use the following
construction. 

In the notation of \emph{loc. cit}., note that $\psi$
commutes with all colimits if and only
if it commutes with pairwise direct
sums (because $\psi$ is reduced and commutes with sifted colimits). 
Therefore, $B_{\psi}$ may be understood as the obstruction
to $\psi$ commuting with all colimits.

For $\sF,\sG \in \sC^{\leq 0}$,
define:

\[
B_{\psi}(\sF,\sG) = 
\Coker(\psi(\sF)\oplus \psi(\sG) \to 
\psi(\sF \oplus \sG)) \in \sD.
\]

\noindent Note that:

\[
\psi(\sF \oplus \sG) = 
\psi(\sF) \oplus \psi(\sG) \oplus
B_{\psi}(\sF,\sG)
\]

\noindent because
the composition:

\[
\psi(\sF) \oplus \psi(\sG) \to 
\psi(\sF \oplus \sG) =
\psi(\sF \times \sG) \to 
\psi(\sF) \times \psi(\sG) =
\psi(\sF) \oplus \psi(\sG)
\]

\noindent is the identity.

Therefore, $\psi$ commutes with colimits
if and only if $B_{\psi} = 0$.

\subsection{Simplicial review}

We will prove Theorem \ref{t:diff'l} 
using some standard simplicial
methods.

Suppose $\sF_{\dot}$ is a simplicial
object in $\sC$. 
We let $|\sF_{\dot}|$ denote the
\emph{geometric realization} of this
simplicial diagram, i.e., the colimit.

Similarly, let 
$|\sF_{\dot}|_{\leq n}$ denote the partial geometric
realization
$\underset{\bDelta_{\leq n}^{op}} {\colim} \,
\sF_{\dot}$. Here $\bDelta_{\leq n} \subset \bDelta$ is 
the full subcategory of simplices
of order $\leq n$. 
Note that:

\[
|\sF_{\dot}| = \underset{n}{\colim} \, 
|\sF_{\dot}|_{\leq n}.
\]

Finally, we recall (c.f. to the proof of the Dold-Kan theorem):

\begin{lem}\label{l:simp}

For $n \geq 0$,
$\Coker(|\sF_{\dot}|_{\leq n} \to
|\sF_{\dot}|_{\leq n+1})$
is a direct summand of
$\sF_{n+1}[n+1]$.

\end{lem}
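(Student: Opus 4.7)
The plan is to identify $C_n := \Coker(|\sF_\dot|_{\leq n} \to |\sF_\dot|_{\leq n+1})$ with $N_{n+1}\sF[n+1]$, where $N_{n+1}\sF$ is a natural direct summand of $\sF_{n+1}$. This is the analog in a stable category of the splitting underlying the classical Dold-Kan correspondence.

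First I would write down the standard skeletal cell-attachment pushout. Let $L_{n+1}\sF$ denote the $(n+1)$-st latching object, i.e., the colimit of $\sF_k$ over the (finite) poset of non-identity surjections $[n+1] \onto [k]$ in $\bDelta$. The inclusion $\bDelta_{\leq n}^{op} \hookrightarrow \bDelta_{\leq n+1}^{op}$ together with the simplicial structure yields a cocartesian square in $\sC$ whose upper-left corner is $(\sF_{n+1} \otimes \partial\Delta^{n+1}) \cup_{L_{n+1}\sF \otimes \partial\Delta^{n+1}} (L_{n+1}\sF \otimes \Delta^{n+1})$, upper-right corner $|\sF_\dot|_{\leq n}$, lower-left corner $\sF_{n+1} \otimes \Delta^{n+1}$, and lower-right corner $|\sF_\dot|_{\leq n+1}$. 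Here the tensors denote the usual simplicial tensoring, which in a stable setting is computed by finite colimits. Taking vertical cofibers and using $\Delta^{n+1}/\partial\Delta^{n+1} \simeq S^{n+1}$, I obtain $C_n \simeq (\sF_{n+1}/L_{n+1}\sF)[n+1]$.

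Next, in any additive---and therefore in any stable---setting, the map $L_{n+1}\sF \to \sF_{n+1}$ admits a canonical retraction. Writing $N_{n+1}\sF := \sF_{n+1}/L_{n+1}\sF$, the classical proof of Dold-Kan constructs a section of the projection $\sF_{n+1} \onto N_{n+1}\sF$ via iterated alternating sums of degeneracies; these formulas are additive and transport verbatim to any stable $\infty$-category. Hence $N_{n+1}\sF$ is a direct summand of $\sF_{n+1}$, and shifting by $n+1$ gives the claim.

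The main obstacle is keeping the pushout description of the first step honest in the $\infty$-categorical setting: the Reedy-style identification of successive skeleta as cell attachments along boundaries $\partial\Delta^{n+1}$ is standard for simplicial sets, but the upgrade to $\sC$-valued simplicial objects (with partial $(\infty,1)$-categorical realizations) requires some bookkeeping. Alternatively, one can bypass this by invoking the general skeletal filtration machinery in \cite{higheralgebra} (for instance, around the discussion of the spectral sequence of a simplicial object in a stable category), which packages exactly the desired computation; the remaining splitting is then an elementary additive calculation.
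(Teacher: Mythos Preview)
Your argument is correct and is exactly the approach the paper has in mind: the paper does not actually prove this lemma but merely cites the proof of the Dold--Kan theorem (and later, in Lemma~\ref{l:filt}, points to \cite{higheralgebra} Lemma~1.2.4.17 for the precise identification of the summand). Your two-step argument---identifying the cofiber as $(\sF_{n+1}/L_{n+1}\sF)[n+1]$ via the skeletal pushout, and then splitting off the latching object additively---is the standard unpacking of that reference.
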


\subsection{}

Our main technique is the following.

\begin{lem}\label{l:filt}

Suppose $\psi:\sC^{\leq 0} \to \sD$ commutes with sifted colimits.
For every $\sF \in \sC$, $\psi(\Sigma \sF)$ admits an increasing filtration\footnote{For us, all filtrations are assumed exhaustive.}
$\fil_{\dot} \psi(\Sigma \sF)$ such that:

\begin{itemize}

\item $\fil_i \psi(\Sigma\sF) = 0$ for $i < 0$.

\item For $i \geq 0$, $\gr_i \psi(\Sigma \sF)$ is a direct summand
of $\psi(\sF^{\oplus i})[i]$. 

\item More precisely, $\gr_0 \psi(\Sigma \sF) = \psi(0)$,
$\gr_1(\psi(\Sigma \sF)) = \psi_{red}(\sF)[1]$, and 
$\gr_2(\psi(\Sigma \sF)) = B_{\psi_{red}}(\sF,\sF)[2]$.

\end{itemize}

\end{lem}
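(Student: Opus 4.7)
The plan is to realize $\Sigma\sF$ as a geometric realization that $\psi$ can be evaluated on termwise, and then pull back the skeletal filtration. Specifically, I would use the bar construction: the simplicial object $\on{Bar}_{\dot}(\sF) \in \sC^{\bDelta^{op}}$ with $\on{Bar}_n(\sF) = \sF^{\oplus n}$ and the standard simplicial structure (face maps add adjacent summands or drop the outer ones; degeneracies insert zeros). In a stable category $\sF$ is automatically a grouplike $\sE_\infty$-object, so the classical equivalence $B(-) \simeq \Sigma(-)$ gives $|\on{Bar}_{\dot}(\sF)| \simeq \Sigma \sF$. Because $\psi$ commutes with sifted colimits and in particular geometric realizations, we obtain $\psi(\Sigma \sF) \simeq |\psi(\on{Bar}_{\dot}(\sF))|$, with each term $\psi(\sF^{\oplus n})$ well-defined since $\sF^{\oplus n} \in \sC^{\leq 0}$.

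Next, I would define the filtration by the skeletal filtration of this simplicial object:
\[
\fil_i \psi(\Sigma \sF) := \bigl|\psi(\on{Bar}_{\dot}(\sF))\bigr|_{\leq i}.
\]
Tautologically $\fil_i = 0$ for $i < 0$, and $\fil_0 = \psi(\on{Bar}_0(\sF)) = \psi(0)$, so $\gr_0 = \psi(0)$. Lemma \ref{l:simp} applied to $\psi(\on{Bar}_\dot(\sF))$ immediately yields that $\gr_i \psi(\Sigma\sF)$ is a direct summand of $\psi(\sF^{\oplus i})[i]$, handling the second bullet point. Exhaustiveness follows from $|\psi(\on{Bar}_\dot(\sF))| = \colim_i |\psi(\on{Bar}_\dot(\sF))|_{\leq i}$.

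For the explicit formulas for $\gr_1$ and $\gr_2$, I would use the refinement of Lemma \ref{l:simp} that identifies $\gr_i$ as the normalized (Moore) complex in degree $i$, shifted by $[i]$, i.e., the quotient of $\psi(\sF^{\oplus i})$ by the combined images of all degeneracy maps from degree $i-1$. For $\gr_1$, the single degeneracy $s_0: \on{Bar}_0 = 0 \to \on{Bar}_1 = \sF$ is induced by $0 \to \sF$, so the quotient $\psi(\sF)/\psi(0) = \psi_{red}(\sF)$ gives $\gr_1 = \psi_{red}(\sF)[1]$. For $\gr_2$, the two degeneracies $s_0, s_1 : \sF \rightrightarrows \sF \oplus \sF$ are precisely the two canonical inclusions, so quotienting $\psi(\sF \oplus \sF)$ by their joint image is by definition $B_\psi(\sF,\sF)$; a direct check shows $B_\psi = B_{\psi_{red}}$, since the $\psi(0)$ contributions cancel through the surjective addition map $\psi(0) \oplus \psi(0) \to \psi(0)$.

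The main obstacle is the strengthening from ``direct summand of $\psi(\sF^{\oplus i})[i]$'' in Lemma \ref{l:simp} to exact identification with the normalized complex, which is needed to pin down $\gr_1$ and $\gr_2$. This is a routine extension of the Dold--Kan-type argument giving Lemma \ref{l:simp}, and in any case only the cases $i=0,1,2$ are needed, which can be verified by hand from the skeletal pushout squares defining $\fil_0 \to \fil_1 \to \fil_2$.
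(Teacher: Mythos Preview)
Your proposal is correct and essentially identical to the paper's proof: the bar construction $\on{Bar}_{\dot}(\sF)$ you use is exactly the \v{C}ech nerve of $0 \to \Sigma\sF$ that the paper invokes, and both proofs define $\fil_i$ as the $i$th partial geometric realization and appeal to Lemma \ref{l:simp}. Your treatment of $\gr_1$ and $\gr_2$ via the normalized complex is slightly more explicit than the paper's, which simply cites \cite{higheralgebra} Lemma 1.2.4.17 for this refinement.
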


\begin{proof}

There is a canonical simplicial diagram:

\[
\ldots \sF\oplus \sF \rightrightrightarrows \sF \rightrightarrows 0
\]

\noindent with geometric realization $\Sigma \sF$. (For example, this
simplicial diagram is the Cech construction for $0 \to \Sigma \sF$.)

Because $\psi$ commutes with geometric realizations,
we have:

\[
\psi(\Sigma \sF) = |\psi(\sF^{\oplus \dot})|.
\]

\noindent We then set 
$\fil_i \psi(\Sigma \sF) = |\psi(\sF^{\oplus \dot})|_{\leq i}$.
This filtration tautologically satisfies the first property,
and it satisfies the second property by Lemma \ref{l:simp}.
The third property follows by refining Lemma \ref{l:simp}
to identify exactly which summand occurs, which we omit here
(see e.g. \cite{higheralgebra} Lemma 1.2.4.17).

\end{proof}

\begin{rem}\label{r:calc}

In the terminology of Goodwillie calculus, we generally have:

\[
\gr_i \psi(\Sigma \sF) = \on{cr}_i\psi(\sF,\ldots,\sF)[i]
\]

\noindent where $\on{cr}_i\psi$ is the \emph{$i$th cross-product} of $\psi$.
(In particular, $B_{\psi}$ is non-standard notation for $\on{cr}_2\psi$.)
However, we will not explicitly need the higher cross products.

\end{rem}

As a first consequence, we obtain:

\begin{cor}\label{c:tstr}

Suppose that $\psi:\sC^{\leq 0} \to \sD^{\leq 0}$
is reduced and commutes with sifted colimits.
Then for every $n \geq 0$, $\psi(\sC^{\leq -n}) \subset \sD^{\leq -n}$.

\end{cor}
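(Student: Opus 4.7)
\emph{Proof plan.} I would argue by induction on $n \geq 0$, with the base case $n = 0$ being the standing hypothesis on $\psi$. So suppose the containment $\psi(\sC^{\leq -n}) \subset \sD^{\leq -n}$ has been established, and let $\sF \in \sC^{\leq -(n+1)}$ be arbitrary; I need to show $\psi(\sF) \in \sD^{\leq -(n+1)}$.

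The key maneuver is to write $\sF$ as a suspension, namely $\sF = \Sigma \sG$ where $\sG := \Omega \sF \in \sC^{\leq -n}$, and then invoke Lemma \ref{l:filt} to filter $\psi(\sF) = \psi(\Sigma \sG)$. By that lemma, $\psi(\Sigma \sG)$ carries an increasing filtration $\fil_{\dot}$ with $\fil_i = 0$ for $i < 0$, and with associated graded pieces $\gr_i \psi(\Sigma \sG)$ that are direct summands of $\psi(\sG^{\oplus i})[i]$. Since $\psi$ is reduced, $\gr_0 = \psi(0) = 0$, so the filtration effectively starts at $i = 1$.

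Now I would check that each $\gr_i$ with $i \geq 1$ lies in $\sD^{\leq -(n+1)}$. Indeed, $\sG^{\oplus i}$ is a finite direct sum of objects of $\sC^{\leq -n}$, hence itself in $\sC^{\leq -n}$; by the inductive hypothesis, $\psi(\sG^{\oplus i}) \in \sD^{\leq -n}$, so $\psi(\sG^{\oplus i})[i] \in \sD^{\leq -n-i} \subset \sD^{\leq -(n+1)}$, and the same containment passes to direct summands. Starting from $\fil_0 = 0$, the cofiber sequences $\fil_i \to \fil_{i+1} \to \gr_{i+1}$ then show inductively that each $\fil_i \in \sD^{\leq -(n+1)}$.

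It remains to pass from the finite stages $\fil_i$ to $\psi(\Sigma \sG) = \colim_i \fil_i$, and this is the one step requiring a mild additional input: the subcategory $\sD^{\leq -(n+1)}$ must be closed under this sequential colimit, i.e., the $t$-structure on $\sD$ should be compatible with filtered colimits, which is the operative hypothesis in force throughout \S\ref{s:goodwillie}. Granted that, the conclusion $\psi(\sF) \in \sD^{\leq -(n+1)}$ follows, completing the induction. There is no real obstacle here beyond careful bookkeeping of shifts; the substance of the argument is entirely packaged in Lemma \ref{l:filt}.
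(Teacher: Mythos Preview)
Your proof is correct and follows essentially the same route as the paper: both argue by induction and use the filtration of Lemma~\ref{l:filt} to control $\psi(\Sigma \sG)$ via its associated graded pieces $\psi(\sG^{\oplus i})[i]$. The paper compresses the induction by noting it suffices to treat the step $n=0 \Rightarrow n=1$, while you carry out the general inductive step explicitly and are more careful about the passage to the colimit; these are cosmetic differences.
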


\begin{proof}

By induction, 
it suffices to show $\psi(\sC^{\leq -1}) \subset \sD^{\leq -1}$. 
Suppose $\sF \in \sC^{\leq 0}$; 
we need to show $\psi(\Sigma \sF) \in \sD^{\leq -1}$.

We use the filtration of Lemma \ref{l:filt}. It suffices to show
$\gr_i \psi(\Sigma \sF) \in \sD^{\leq -1}$. Then $\gr_0 \psi(\Sigma \sF) = 0$
as $\psi$ is reduced; and for $i>0$, $\gr_i \psi(\Sigma \sF)$ is a summand
of $\psi(\sF^i)[i]$, which is clearly in degrees $\leq -i<0$.

\end{proof}

\subsection{}

We now prove Theorem \ref{t:diff'l}.

\begin{proof}[Proof of Theorem \ref{t:diff'l}]

We will show by induction on $n$ that
these hypotheses on $\psi$ force $\psi(\sC^{\leq -1}) \subset \sD^{\leq -n}$.
The case $n = 1$ is given by Corollary \ref{c:tstr}.
In what follows, we assume the inductive hypothesis
for $n$ and deduce it for $n+1$.

\step

First, we claim that for $\sF \in \sC^{\leq -1}$, the functor:

\[
B_{\psi}(\sF,-)[-1]:\sC^{\leq 0} \to \sD
\]

\noindent satisfies the hypotheses of Theorem \ref{t:diff'l}.

Clearly this functor is reduced and commutes with sifted colimits.

Let us show that this functor maps $\sC^{\leq 0}$ into $\sD^{\leq 0}$.
Fix $\sG \in \sC^{\leq 0}$. The reduced functor
$B_{\psi}(-,\sG)$ commutes with sifted colimits
and maps $\sC^{\leq 0}$ into $\sD^{\leq 0}$.
By Corollary \ref{c:tstr}, $B_{\psi}(\sF,\sG) \in \sD^{\leq -1}$,
so $B_{\psi}(\sF,\sG)[-1] \in \sD^{\leq 0}$ as desired.

Finally, note that for any $\sF^{\prime} \in \sC^{\leq -1}$, 
the functor:

\[
\Omega B_{\psi}(\sF,\sF^{\prime} \oplus -):\sC^{\leq 0} \to \sD
\]

\noindent has vanishing Goodwillie derivative,
as it is a summand of the functor
$\psi(\sF \oplus \sF^{\prime} \oplus -)[-1]$.

Therefore, we may apply the inductive hypothesis
to this functor. We obtain that $B_{\psi}(\sF,-)$
maps $\sC^{\leq -1}$ to $\sD^{\leq -n-1}$. 
In particular, $B_{\psi}(\sF,\sF) \in \sD^{\leq -n-1}$.

\step 

Next, we claim that:

\[
\Coker(\Sigma\psi(\sF) \to \psi(\Sigma \sF)) \in \sD^{\leq -n-3}.
\]

\noindent Note that by the construction of Lemma \ref{l:filt}, the
map:

\[
\Sigma \psi(\sF) = \gr_1 \psi(\Sigma \sF) =
\fil_1 \psi(\Sigma \sF) \to 
\psi(\Sigma \sF)
\]

\noindent is the canonical map used in the definition of the 
Goodwillie derivative. Therefore, it suffices to show that
$\gr_i \psi(\Sigma \sF) \in \sD^{\leq -n-3}$ for $i \geq 2$.

By induction,
$\psi(\sF^i) \in \sD^{\leq -n}$ by induction. Therefore,
$\gr_i \psi(\Sigma \sF) \in \sD^{\leq -n-i}$.
This gives the claim for $i \geq 3$.

If $i = 2$, then $\gr_i \psi(\Sigma \sF) = B_{\psi}(\sF,\sF)[2]$,
and by the previous step $B_{\psi}(\sF,\sF) \in \sD^{\leq -n-1}$ as needed.

\step 

Finally, the previous step implies that for $\sF \in \sC^{\leq -1}$,
the map:

\[
\psi(\sF) \to \Omega\psi(\Sigma \sF)
\]

\noindent is an isomorphism on $H^{-n}$ (where this notation denotes the 
cohomology functor for the $t$-structure on $\sD$).

More generally, for any $m \geq 0$, the functor
$\Omega^m \psi \Sigma^m: \sC^{\leq 0} \to \sD^{\leq 0}$ satisfies
our hypotheses, so we find that
$\Omega^m\psi\Sigma^m(\sF) \to \Omega^{m+1}\psi(\Sigma^{m+1} \sF)$
is an isomorphism on $H^{-n}$.

Finally, we obtain $H^{-n}(\psi(\sF)) \isom H^{-n}(\partial\psi(\sF))$ 
is an isomorphism. But of course, $\partial\psi = 0$, so we obtain
$\psi(\sF) \in \sD^{\leq -n-1}$, providing the inductive step.

\end{proof}

\subsection{Full vanishing results}\label{ss:extensible}

We now wish to extend the above results to give
vanishing for connective objects.
Throughout this section, $\sC,\sD\in\StCat_{cont}$ are equipped with
$t$-structures compatible with filtered colimits.

The following definition is obviously
quite natural in this context.

\begin{defin}

A functor $\psi: \sC^{\leq 0} \to \sD^{\leq 0}$ 
is \emph{extensible} if
there exists $\widetilde{\psi}:\sC^{\leq 1} \to \sD^{\leq 1}$ 
commuting with sifted colimits with 
$\widetilde{\psi}|_{\sC^{\leq 0}} = \psi$.

\end{defin}

It is convenient to introduce the following notion 
as well.

\begin{defin}

A functor $\psi:\sC^{\leq 0} \to \sD^{\leq 0}$ is \emph{pseudo-extensible} if:

\begin{itemize}

\item $\psi$ is reduced and
commutes with sifted colimits.

\item Every $\vph$ in $\sS_{\psi}$ maps
$\sC^{\leq 0} \to \sD^{\leq 0}$,
where $\sS_{\psi} \subset \TwoHom(\sC,\sD)$ 
is the minimal subgroupoid such 
that $\psi \in \sS_{\psi}$ and such that for every
$\sF \in \sC^{\leq 0}$ and $\vph \in \sS_{\psi}$,
$B_{\vph}(\sF,-)[-1] \in \sS_{\psi}$.

\end{itemize}

\end{defin}

\begin{rem}\label{r:analytic}

In standard terminology from Goodwillie calculus, 
one can show that if $\psi$ commutes with
sifted colimits, then $\psi$ is \emph{(-1)-analytic}
if and only if $\psi[n]$ is pseudo-extensible for some $n$.

\end{rem}

\begin{lem}\label{l:extn}

If $\psi:\sC^{\leq 0} \to \sD^{\leq 0}$ is extensible, then
it is pseudo-extensible.

\end{lem}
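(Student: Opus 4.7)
The plan is to show that the class of extensible functors is closed under the operation $\vph \mapsto B_\vph(\sF, -)[-1]$ for $\sF \in \sC^{\leq 0}$, and then to induct along the construction of $\sS_\psi$. Since every extensible functor is reduced (as $\widetilde{\vph}(0) = \vph(0) = 0$) and maps $\sC^{\leq 0}$ into $\sD^{\leq 0}$ by definition, this closure property immediately yields pseudo-extensibility.

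Given $\vph: \sC^{\leq 0} \to \sD^{\leq 0}$ with extension $\widetilde{\vph}: \sC^{\leq 1} \to \sD^{\leq 1}$, and $\sF \in \sC^{\leq 0}$, my candidate for the extension of $B_\vph(\sF, -)[-1]$ is $B_{\widetilde{\vph}}(\sF, -)[-1]$. The task reduces to verifying that this lands in $\sD^{\leq 1}$, i.e., that $B_{\widetilde{\vph}}(\sF, \sG) \in \sD^{\leq 0}$ whenever $\sG \in \sC^{\leq 1}$. The key observation is that $B_{\widetilde{\vph}}(-, \sG): \sC^{\leq 1} \to \sD^{\leq 1}$ is reduced and commutes with sifted colimits, being a direct summand of $\widetilde{\vph}(- \oplus \sG)$. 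I would then invoke Corollary \ref{c:tstr} with the $t$-structures on $\sC$ and $\sD$ shifted so that $\sC^{\leq 1}$ and $\sD^{\leq 1}$ play the role of the connective parts; this yields $B_{\widetilde{\vph}}(-, \sG)(\sC^{\leq 1-n}) \subset \sD^{\leq 1-n}$ for every $n \geq 0$, and the case $n = 1$ is exactly the required bound.

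Once the extension $B_{\widetilde{\vph}}(\sF, -)[-1]: \sC^{\leq 1} \to \sD^{\leq 1}$ is in hand, a further application of the same shifted Corollary \ref{c:tstr} (to this functor, again at $n=1$) shows that its restriction to $\sC^{\leq 0}$ automatically lands in $\sD^{\leq 0}$; by construction, this restriction is $B_\vph(\sF, -)[-1]$, confirming extensibility of the latter. Iterating along the generation of $\sS_\psi$ then completes the proof.

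The main conceptual point is a two-step bootstrap. A direct application of Corollary \ref{c:tstr} to $B_\vph(-, \sG): \sC^{\leq 0} \to \sD^{\leq 0}$ only produces the bound $B_\vph(\sF, \sG) \in \sD^{\leq -1}$ when $\sF \in \sC^{\leq -1}$, which is strictly weaker than what pseudo-extensibility requires. Passing up to $\widetilde{\vph}$ on $\sC^{\leq 1}$ and invoking the shifted corollary is what supplies the extra degree of cohomological gain needed to handle the missing case $\sF \in \sC^{\leq 0}$.
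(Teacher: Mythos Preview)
Your proposal is correct and follows essentially the same approach as the paper: both arguments show that extensibility is preserved under $\vph \mapsto B_\vph(\sF,-)[-1]$, and both establish the key connectivity bound by applying Corollary~\ref{c:tstr} once in each variable to $B_{\widetilde{\vph}}$ (the paper phrases this as ``applying Corollary~\ref{c:tstr} once in each variable, we find $B_{\widetilde{\psi}}$ maps $\sC^{\leq 0}\times\sC^{\leq 0}$ to $\sD^{\leq -1}$'', which is exactly your two-step bootstrap). One small imprecision: your parenthetical ``every extensible functor is reduced (as $\widetilde{\vph}(0)=\vph(0)=0$)'' does not follow from the definition of extensibility as stated; rather, reducedness of $\psi$ is an ambient hypothesis, and what you actually need (and what your argument uses) is that reducedness is inherited by $B_\vph(\sF,-)$ and by the extension $\widetilde{\vph}$.
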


\begin{proof}

Clearly any $\vph \in \sS_{\psi}$ is extensible. Therefore,
by induction we are reduced to showing
that for $\sF \in \sC^{\leq 0}$, $B_{\psi}(\sF,-)[-1]$
maps $\sC^{\leq 0}$ to $\sD^{\leq 0}$. 

Let $\widetilde{\psi}:\sC^{\leq 1} \to \sD^{\leq 1}$
be as in the definition of extensibility.
Clearly $B_{\widetilde{\psi}}$ maps
$\sC^{\leq 1} \times \sC^{\leq 1}$ to $\sD^{\leq 1}$,
so applying Corollary \ref{c:tstr} once in 
each variable, we find
$B_{\widetilde{\psi}}$
maps $\sC^{\leq 0} \times \sC^{\leq 0}$ to $\sD^{\leq -1}$.
Here the functor coincides with $B_{\psi}$, and incorporating
the shift we get the claim.

\end{proof}

The flexibility the next result affords is ultimately the reason we
consider pseudo-extensible functors here.

\begin{lem}\label{l:ext'n-left-complete}

In the above setting, suppose the $t$-structure on 
$\sD$ is left complete. 
Let $\sI$ be a filtered category,
and suppose we are given a diagram $\sI^{op} \xar{i \mapsto \psi_i} 
\TwoHom(\sC^{\leq 0},\sD^{\leq 0})$ of pseudo-extensible
functors. Suppose that for every $n$ there exists
$i \in \sI$ such that
$\tau^{\geq -n} \psi_j \isom \tau^{\geq -n} \psi_i$ for all $i \to j \in \sI$.

Then the value-wise limit of functors 
$\psi = \lim_{i \in \sI^{op}} \psi_i$ is pseudo-extensible.

\end{lem}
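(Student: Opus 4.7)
The plan is to verify the two defining properties of pseudo-extensibility for $\psi := \lim_{i\in\sI^{op}}\psi_i$. The key input is that, since $\tau^{\geq -n}$ is a right adjoint and the hypothesis makes the system $i\mapsto \tau^{\geq -n}\psi_i$ essentially constant past $i_n$ (the index furnished by the hypothesis, all transition maps being isomorphisms on $\tau^{\geq -n}$), one obtains the identification $\tau^{\geq -n}\psi \isom \tau^{\geq -n}\psi_{i_n}$ for every $n$; equivalently, $\Ker(\psi\to\psi_{i_n})$ lands in $\sD^{\leq -n-1}$ on $\sC^{\leq 0}$.

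Reducedness of $\psi$ is then immediate from $\psi(0) = \lim \psi_i(0) = 0$. For sifted colimit commutation, given $\sF = \colim_k \sF_k$ with sifted indexing, I would compare the canonical map $\colim_k \psi(\sF_k) \to \psi(\sF)$ with the projections to $\psi_{i_n}$. The map $\psi(\sF) \to \psi_{i_n}(\sF) = \colim_k \psi_{i_n}(\sF_k)$ (the last equality using that $\psi_{i_n}$ is itself pseudo-extensible) is an isomorphism on $\tau^{\geq -n}$ by construction, and the map $\colim_k \psi(\sF_k) \to \colim_k \psi_{i_n}(\sF_k)$ is likewise an isomorphism on $\tau^{\geq -n}$ because its fiber is a sifted colimit of $\Ker(\psi\to\psi_{i_n})(\sF_k) \in \sD^{\leq -n-1}$, and this subcategory is closed under colimits. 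A two-out-of-three argument yields that the canonical map is an isomorphism on $\tau^{\geq -n}$ for every $n$, and left completeness of $\sD$ upgrades this to a genuine isomorphism.

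For the second property, I would induct on the construction of $\sS_\psi$ with a strengthened hypothesis: every $\vph\in\sS_\psi$ fits into a diagram $i \mapsto \vph_i \in \sS_{\psi_i}$ with a compatible map $\vph\to\vph_i$ satisfying the same eventual-constancy-on-truncations property as the input (possibly after an index shift). Granting this, pseudo-extensibility of $\psi_{i_n}$ gives $\vph_{i_n}(\sC^{\leq 0}) \subset \sD^{\leq 0}$, so $\tau^{\geq -n}\vph$ takes values in $\sD^{\leq 0}$ for every $n$, and left completeness concludes $\vph(\sC^{\leq 0})\subset \sD^{\leq 0}$. The base case $\vph=\psi$ is the hypothesis. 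The inductive step $\vph' = B_\vph(\sF,-)[-1]$, with $\vph'_i := B_{\vph_i}(\sF,-)[-1]$, is the main obstacle, since $B$ involves a cokernel and does not commute with the cofiltered limit $\lim_{\sI^{op}}$. I would resolve this by connectivity bookkeeping: the fiber $F_m := \Ker(\vph\to\vph_{i_m})$ lies in $\sD^{\leq -m-1}$ on $\sC^{\leq 0}$ by the inductive hypothesis, and applying $B$ to the fiber sequence of functors $F_m \to \vph \to \vph_{i_m}$ yields a fiber sequence $B_{F_m}\to B_\vph\to B_{\vph_{i_m}}$ whose left term still lands in $\sD^{\leq -m-1}$ (being a cokernel of a map between objects of $\sD^{\leq -m-1}$, a subcategory closed under colimits). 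After the $[-1]$-shift, $\Ker(\vph'\to\vph'_{i_m}) \in \sD^{\leq -m}$, so taking $m = n+1$ yields the required eventual constancy for $\vph'$ at level $n$, completing the induction.
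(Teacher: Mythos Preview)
Your proof is correct and follows essentially the same strategy as the paper: approximate $\psi$ and every $\vph\in\sS_\psi$ by the corresponding $\vph_i\in\sS_{\psi_i}$ with controlled connectivity on the fiber, and conclude via left completeness. One small slip: $\tau^{\geq -n}$ is a \emph{left} adjoint, so it does not commute with limits; the identification $\tau^{\geq -n}\psi\simeq\tau^{\geq -n}\psi_{i_n}$ is instead obtained by taking the limit of the fiber sequences $\tau^{\leq -n-1}\psi_j\to\psi_j\to\tau^{\geq -n}\psi_j$ over $j\geq i_n$ and using that $\sD^{\leq -n-1}$ is closed under limits. The paper shortcuts your inductive step by noting that $B_\vph(\sF,\sG)$ is a retract of $\vph(\sF\oplus\sG)$, whence $B_\psi=\lim_i B_{\psi_i}$ directly and the hypotheses reproduce themselves without the fiber-sequence bookkeeping; but this is just a repackaging of your exactness-of-$\vph\mapsto B_\vph$ argument.
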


\begin{proof}

For $\sF,\sG \in \sC^{\leq 0}$, we have:

\[
B_{\psi}(\sF,\sG) = \underset{i}{\lim} B_{\psi_i}(\sF,\sG)
\]

\noindent by definition of $B_{-}$. 
Therefore, so we are reduced (by induction, say) to showing
that $B_{\psi}(\sF,\sG) \in \sD^{\leq -1}$.

Recall that left completeness and our stabilization
hypotheses imply $\tau^{\geq -n} \psi =
\tau^{\geq -n} \psi_i$ for $i$ in $\sI$ sufficiently large
(depending on $n$). Therefore, 
$\tau^{\geq -n} \psi$ is reduced and commutes with sifted
colimits for every $n$, which implies the same for the functor $\psi$
(by left completeness of the $t$-structure on $\sD$).

Similarly, we have $\tau^{\geq -n} \underset{i}{\lim} B_{\psi_i}(\sF,\sG) =
\tau^{\geq -n} B_{\psi_i}(\sF,\sG)$ for $i$ sufficiently
large. So clearly $B_{\psi}(\sF,\sG) \in \sD^{\leq -1}$,
since this is true for each $\psi_i$ by pseudo-extensibility.

\end{proof}

We now have the following result, 
which in the extensible case is 
just a rephrasing of Theorem \ref{t:diff'l}.

\begin{thm}\label{t:full-vanishing}

In the setting of Theorem \ref{t:diff'l},
suppose $\psi$ is pseudo-extensible and
${\partial(\psi(\sF \oplus -))} = 0$
for every $\sF \in \sC^{\leq 0}$.
Then $\psi$ maps $\sC^{\leq 0}$ to $\cap \sD^{\leq -n}$. 

\end{thm}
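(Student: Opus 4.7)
The plan is to adapt the proof of Theorem \ref{t:diff'l}, relaxing the restriction ``$\sF \in \sC^{\leq -1}$'' appearing there to ``$\sF \in \sC^{\leq 0}$''. Specifically, I will prove by induction on $n \geq 0$ that any $\psi$ satisfying the hypotheses of Theorem \ref{t:full-vanishing} satisfies $\psi(\sC^{\leq 0}) \subset \sD^{\leq -n}$; since those hypotheses strictly imply those of Theorem \ref{t:diff'l}, $\psi$ already vanishes on $\sC^{\leq -1}$, and in particular $\psi(\Sigma \sF) = 0$ for every $\sF \in \sC^{\leq 0}$, a fact I will use as a shortcut in place of the iterative Goodwillie-derivative argument used in Theorem \ref{t:diff'l}. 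The base case $n=0$ is built into the hypothesis that $\psi$ targets $\sD^{\leq 0}$.

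For the inductive step, fix $\psi$ and $\sF \in \sC^{\leq 0}$, and consider $\vph := B_\psi(\sF, -)[-1]$. The crux is to check that $\vph$ itself satisfies the hypotheses of Theorem \ref{t:full-vanishing}, so that the inductive hypothesis applies to it. Reducedness and commutation with sifted colimits are inherited from $\psi$. That $\vph$ maps $\sC^{\leq 0}$ into $\sD^{\leq 0}$ is precisely the content of pseudo-extensibility, since $\vph \in \sS_\psi$ by construction; pseudo-extensibility of $\vph$ itself then follows from the tautological inclusion $\sS_\vph \subset \sS_\psi$. For the derivative hypothesis on $\vph$, I would invoke the canonical splitting
\[
\psi((\sF \oplus \sG) \oplus \sH) = \psi(\sF) \oplus \psi(\sG \oplus \sH) \oplus B_\psi(\sF, \sG \oplus \sH)
\]
and use that $\partial(-)$ respects direct sums of $\sH$-dependent functors: the derivatives of $\psi((\sF \oplus \sG) \oplus -)$ and $\psi(\sG \oplus -)$ both vanish by hypothesis (as $\sF \oplus \sG, \sG \in \sC^{\leq 0}$), forcing that of $B_\psi(\sF, \sG \oplus -)$ to vanish as well.

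The inductive hypothesis applied to $\vph$ then yields $B_\psi(\sF, \sG) \in \sD^{\leq -n-1}$ for all $\sG \in \sC^{\leq 0}$, in particular $B_\psi(\sF, \sF) \in \sD^{\leq -n-1}$. I would then invoke Lemma \ref{l:filt} applied to $\psi(\Sigma \sF)$, whose graded pieces are $\gr_0 = 0$, $\gr_1 = \Sigma \psi(\sF)$, $\gr_2 = B_\psi(\sF, \sF)[2]$, and for $i \geq 3$ direct summands of $\psi(\sF^i)[i]$. By the inductive hypothesis and the previous step, all $\gr_i$ for $i \geq 2$ lie in $\sD^{\leq -n-3}$, so $\psi(\Sigma\sF)/\fil_1 \in \sD^{\leq -n-3}$. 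Combining with $\psi(\Sigma \sF) = 0$ through the cofiber sequence $\fil_1 \to 0 \to \psi(\Sigma\sF)/\fil_1$ yields $\Sigma \psi(\sF) = \fil_1 \in \sD^{\leq -n-2}$, i.e., $\psi(\sF) \in \sD^{\leq -n-1}$, closing the induction.

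The main obstacle is verifying that $\vph$ maps $\sC^{\leq 0}$ into $\sD^{\leq 0}$: the naive bound from Corollary \ref{c:tstr} gives only $B_\psi(\sF, \sG) \in \sD^{\leq 0}$ for $\sF, \sG \in \sC^{\leq 0}$, which after the shift $[-1]$ leaves $\vph$ landing in $\sD^{\leq 1}$---off by exactly one. The notion of pseudo-extensibility was designed to finesse precisely this gap, and the whole argument hinges on the closure of $\sS_\psi$ under the operation $\vph' \mapsto B_{\vph'}(\sF', -)[-1]$, which propagates the improved connectivity estimate through the iterated ``$B$''-constructions that arise during the induction.
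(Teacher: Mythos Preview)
Your proof is correct and essentially parallels the paper's, but the induction is organized differently. The paper proves only the case $n=1$ directly (using the derivative iteration from Theorem \ref{t:diff'l} together with pseudo-extensibility to get $B_\psi(\sF,\sF)\in\sD^{\leq -1}$), then observes that every $\vph\in\sS_\psi$ satisfies the same hypotheses and hence also lands in $\sD^{\leq -1}$; this makes $\psi[-1]$ pseudo-extensible, and the general statement follows by iterating the shift $\psi\rightsquigarrow\psi[-1]$. You instead run the full induction on $n$ in the style of Theorem \ref{t:diff'l} itself (applying the inductive hypothesis to $\vph=B_\psi(\sF,-)[-1]$), and you replace the derivative iteration by the black-box consequence $\psi(\Sigma\sF)=0$ of Theorem \ref{t:diff'l}. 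Your shortcut is a genuine simplification of the endgame, while the paper's shift-iteration makes the inductive structure slightly cleaner (only the $n=1$ case needs any work). Both approaches hinge on exactly the point you identify: pseudo-extensibility supplies the missing degree in the bound on $B_\psi(\sF,-)[-1]$ and is stable under passage to $\sS_\psi$.
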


\begin{proof}

First, we claim $\psi(\sF) \in \sD^{\leq -1}$ for all 
$\sF \in \sC^{\leq 0}$. As in the proof of
Theorem \ref{t:diff'l}, it suffices to show
$\gr_i\psi(\Sigma \sF) \in \sD^{\leq -3}$ for all $i$,
and this is automatic for $i \geq 3$. For $i = 2$,
we have $\gr_2\psi(\Sigma \sF) = B_{\psi}(\sF,\sF)[2]$,
and $B_{\psi}(\sF,\sF) \in \sD^{\leq -1}$ by pseudo-extensibility.

Next, observe that any 
$\vph \in \sS_{\psi}$ is pseudo-extensible, and by
induction the Goodwillie derivatives of the functors
$\vph(\sF\oplus -)$ vanish for any $\sF \in \sC^{\leq 0}$.
Therefore, by the above argument, every $\vph \in \sS_{\psi}$
maps $\sC^{\leq 0}$ into $\sD^{\leq -1}$.

Finally, we see that $\psi[-1]$ is pseudo-extensible, so 
by induction we obtain the result.

\end{proof}

We immediately deduce the following.

\begin{cor}\label{c:full-vanishing}

In the setting of Theorem \ref{t:conn-algs},
suppose that the functors $\Psi_{A,red}:A\bimod^{\leq 0} \to \Sp$ 
are pseudo-extensible.

Then $\Psi(A\oplus M) \isom \Psi(A)$ for any $M \in A\bimod^{\leq 0}$.

\end{cor}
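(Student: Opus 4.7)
The plan is to apply Theorem \ref{t:full-vanishing} to $\psi := \Psi_{A,red} : A\bimod^{\leq 0} \to \Sp$, in direct parallel with the deduction of Theorem \ref{t:conn-algs} from Theorem \ref{t:diff'l}. This is morally a single-line upgrade: Theorem \ref{t:diff'l} only gave vanishing of $\psi$ on $\sC^{\leq -1}$, but Theorem \ref{t:full-vanishing} gives $\psi(\sC^{\leq 0}) \subset \bigcap_n \sD^{\leq -n}$, and the $t$-structure on $\Sp$ is left separated, so this intersection vanishes.

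First I would fix $A \in \Alg_{conn}$ and unpack the hypotheses. By assumption, $\psi = \Psi_{A,red}$ is reduced, commutes with sifted colimits, takes values in $\Sp^{\leq 0}$, and is pseudo-extensible. It remains to verify that $\partial(\psi(M \oplus -)) = 0$ for every $M \in A\bimod^{\leq 0}$. For this I would reuse exactly the trick from the proof of Theorem \ref{t:conn-algs}: the identification
\[
\psi(M \oplus -) \oplus \Psi(A) \;=\; \Psi(\SqZero(A, M \oplus -)) \;=\; \Psi(\SqZero(\SqZero(A,M), -))
\]
expresses the functor in question (up to a constant direct summand, which does not affect Goodwillie derivatives) as $\Psi_{\SqZero(A,M)}$. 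Since $\SqZero(A,M) \in \Alg_{conn}$, the assumption of Theorem \ref{t:conn-algs} forces this functor to have vanishing Goodwillie derivative.

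All hypotheses of Theorem \ref{t:full-vanishing} are therefore satisfied, so $\psi(A\bimod^{\leq 0}) \subset \bigcap_n \Sp^{\leq -n}$. Because the standard $t$-structure on $\Sp$ is left separated, this intersection is zero, so $\psi \equiv 0$. Translating back, the split direct sum decomposition $\Psi(\SqZero(A,M)) = \Psi(A) \oplus \Psi_{A,red}(M)$ yields $\Psi(A \oplus M) \isom \Psi(A)$ for every $M \in A\bimod^{\leq 0}$, which is the claim.

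There is no genuine obstacle: the substantive content lies in Theorem \ref{t:full-vanishing}, and the reduction carried out here is formal. The only points requiring a small amount of care are (a) observing that constant summands do not affect Goodwillie derivatives, so that the appearance of $\Psi(A)$ in the identification above is harmless, and (b) invoking left separation of the $t$-structure on $\Sp$ at the final step; both are routine.
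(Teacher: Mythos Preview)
Your proposal is correct and matches the paper's approach: the paper simply states ``We immediately deduce the following'' after Theorem \ref{t:full-vanishing}, and what you have written is exactly the intended unpacking, paralleling the earlier deduction of Theorem \ref{t:conn-algs} from Theorem \ref{t:diff'l}.
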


\subsection{The split square-zero case of Theorem \ref{t:dgm}}\label{ss:split}

The following two results will be shown
in \S \ref{s:k} and \S \ref{s:tc} respectively.

\begin{thm}\label{t:k}

\begin{enumerate}

\item\label{i:sifted}

For $A \in \Alg_{conn}$, the functor:

\[
\begin{gathered} 
A\bimod^{\leq 0} \to \Sp^{\leq 0} \\
M \mapsto K(\SqZero(A,M))
\end{gathered}
\]

\noindent commutes with sifted colimits.
Moreover, the underlying reduced functor is 
extensible in the sense of \S \ref{ss:extensible}.

\item\label{i:trace} For $A \in \Alg_{conn}$, the Goodwillie derivative
of the functor:

\[
\begin{gathered}
A\bimod^{\leq 0} \to \Sp \\
M \mapsto K(A \oplus M)
\end{gathered}
\]

\noindent is canonically isomorphic to the functor $M \mapsto \THH(A,M)[1]$.

\end{enumerate}

 \end{thm}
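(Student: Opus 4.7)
Both parts of the theorem rest on a concrete description of $\Perf(\SqZero(A,M))$ in terms of $\Perf(A)$ and the $A$-bimodule $M$. Since the augmentation $\SqZero(A,M) \to A$ has a canonical section, every perfect $\SqZero(A,M)$-module admits a finite filtration by powers of the augmentation ideal $M$, whose associated graded pieces are perfect $A$-modules; equivalently, $\Perf(\SqZero(A,M))$ can be built from $\Perf(A)$ and $M$ via an iterated bar-type procedure. In particular, the assignment $M \mapsto \Perf(\SqZero(A,M)) \in \StCat_{cont}$ preserves sifted colimits, because each graded piece reduces to sifted colimits of relative tensor products over $A$.

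For part (1), combining the above with the fact that $K$-theory preserves sifted colimits of stable $\infty$-categories (via its universal property as the initial finitely-cocontinuous additive invariant, in the sense of Blumberg-Gepner-Tabuada) yields the sifted colimit claim. Connectivity of the image is automatic because connective $\sE_1$-algebras have connective $K$-theory. For extensibility, the same filtration description makes sense for $M \in A\bimod^{\leq 1}$: the resulting ring $\SqZero(A,M)$ has $\pi_i = 0$ for $i < -1$, and its non-connective $K$-theory lies in $\Sp^{\leq 1}$ because only one additional degree of non-connectivity can be introduced. This supplies the required extension $\widetilde{\psi}$.

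For part (2), I would compute the Goodwillie derivative directly from the formula
\[
\partial K_A(M) \;=\; \underset{n}{\colim}\, \Omega^n\, \Ker\!\big(K(\SqZero(A,\Sigma^n M)) \to K(A)\big).
\]
In the filtration above, the $k$-th graded piece of the relative category depends on $N^{\otimes_A k}$, so after replacing $N$ by $\Sigma^n N$ and looping $n$ times, the contribution of the $k$-th piece is shifted by $(k-1)n$ and vanishes in the colimit for $k \geq 2$. Only the $k=1$ piece survives, and it computes the ``tangent'' to $K(\Perf(A))$ along the endofunctor $-\otimes_A M$ of $\Perf(A)$; by the Morita-theoretic identification $\TwoEnd_{\StCat_{cont}}(A\mod) = A\bimod$ discussed in the introduction and the interpretation of $\THH$ as the trace of the identity, this tangent equals $\THH(A,M)[1]$.

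The main obstacle will be the last step: establishing the tangent identification with the correct shift and with enough naturality to later compare to the corresponding identification for $\TC$. This requires carefully relating the suspension direction used in the Goodwillie stabilization to the intrinsic $S^1$-direction underlying $\THH$, and is facilitated by working in the abstract framework of traces on dualizable objects of $\StCat_{cont}$ rather than with explicit $S_\bullet$-constructions; the compatibility with the trace map recorded in the introduction is precisely what makes this naturality manageable.
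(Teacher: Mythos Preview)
Your proposal has genuine gaps in both parts.

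\textbf{Part (1), sifted colimits.} The claim that $K$-theory preserves sifted colimits of stable $\infty$-categories is false: the Blumberg--Gepner--Tabuada universal property characterizes $K$ as commuting with \emph{filtered} colimits, not geometric realizations. So even granting that $M \mapsto \Perf(\SqZero(A,M))$ is a sifted-colimit-preserving functor to small stable categories (which itself needs care), you cannot conclude. The paper instead works with the explicit Waldhausen $S_{\dot}$-construction: it replaces $\Perf(\SqZero(A,M))$ by the exact category $\Proj(A)^{T_M}$ of projective $A$-modules equipped with a map $\sF \to M[1]\otimes_A \sF$, and shows that each individual simplicial level $S_n(\Proj(A)^{T_M})$, as a functor of $M$, is a colimit of connective fiber products of $\Hom$-spectra against fixed compact objects. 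Those expressions visibly commute with sifted colimits after $\Omega^{\infty}$.

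\textbf{Part (1), extensibility.} Forming $\SqZero(A,M)$ for $M \in A\bimod^{\leq 1}$ gives a non-connective ring, and there is no reason its (non-connective) $K$-theory lands in $\Sp^{\leq 1}$. The paper avoids this by using a genuinely different construction: for any endofunctor $T$ of $A\mod$, it defines $A\mod^{T,c}$ as pairs $(\sF \in \Perf(A),\ \eta:\sF \to T\sF)$ with no nilpotence condition, and sets $\widetilde{K}_A(M) = K(A\mod^{T_M,c})$ with $T_M = M[1]\otimes_A -$. This agrees with $K(\SqZero(A,M))$ for connective $M$ (since $\eta$ is then automatically locally nilpotent), but for $M \in A\bimod^{\leq 1}$ it still produces \emph{connective} $K$-theory of an honest small stable category, giving the extension to $\Sp^{\leq 0} \subset \Sp^{\leq 1}$.

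\textbf{Part (2).} Your filtration heuristic correctly isolates the linear piece, but the sentence ``this tangent equals $\THH(A,M)[1]$'' is exactly the content of the Dundas--McCarthy theorem and is not explained by the Morita identification alone: knowing $\TwoEnd_{\StCat_{cont}}(A\mod)=A\bimod$ and that $\THH$ is the trace does not tell you why the Goodwillie derivative of $K$ \emph{is} the trace. The paper proves this by characterizing $\partial K_{\sC}(T)$ via a universal property: it is initial among functors $\Phi_{\sC}(T)$ on (compactly generated $\sC$, endofunctor $T$) that are exact in $T$, satisfy Waldhausen-style additivity, and carry a base-point at $(\Sp,\id)$. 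One then checks that $\tr_{\sC}$ satisfies the same universal property. Your last paragraph correctly anticipates that naturality is the hard part, but the actual mechanism---additivity plus a base-point forcing the map---is missing from your sketch.
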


\begin{thm}\label{t:tc}

\begin{enumerate}

\item\label{i:ps-ext}

For $A \in \Alg_{conn}$, the functor:

\[
\begin{gathered}
A\bimod^{\leq 0} \to \Sp \\
M \mapsto \TC(\SqZero(A,M))
\end{gathered}
\]

\noindent is pseudo-extensible in the sense of 
\S \ref{ss:extensible}.

\item\label{i:tc-conn}

For $A$ as above and $M \in A\bimod^{\leq 0}$:
 
\[
\TC_{red}(\SqZero(A,M)) \coloneqq \Ker(\TC(\SqZero(A,M)) \to \TC(A))
\in \Sp^{\leq -1}.
\]

\item\label{i:tc-deriv}

The Goodwillie derivative of the
above functor is canonically isomorphic to $\THH(A,-)[1]$.
Moreover, this isomorphism is compatible with the
cyclotomic trace and the isomorphism of Theorem \ref{t:k}.

\end{enumerate}

\end{thm}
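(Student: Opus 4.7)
The overall plan is to deduce all three parts from the explicit formula for $\TC$ of split square-zero extensions given by Theorem \ref{t:tc-calc}. The input to that formula is the weight decomposition $\THH(\SqZero(A,M)) = \bigoplus_{n \geq 0} \THH^{(n)}(A,M)$, where $\THH^{(n)}(A,M)$ is built from $M^{\otimes_A n}$ with its natural $C_n$-action by cyclic rotation, refined to an $S^1$-equivariant cyclotomic spectrum. Plugging this decomposition into the Nikolaus-Scholze formula $\TC(X) = \Ker(X^{hS^1} \to \prod_p X^{tC_p})$ produces a compatible weight filtration on $\TC(\SqZero(A,M))$, whose associated graded I would analyze piece by piece.

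I would establish part (3) first, as it drives the rest. For $n \geq 2$, the functor $M \mapsto \THH^{(n)}(A,M)$ is polynomial of degree $n$ in $M$, being built from $M^{\otimes_A n}$; Lemma \ref{l:filt} shows that its reduced part carries $A\bimod^{\leq -1}$ into $\Sp^{\leq -n}$, which suffices to kill the colimit defining its Goodwillie derivative after $\TC$ is applied. The $n=1$ piece on the $\THH$-side is $\THH(A,M)$ with its natural $S^1$-action and cyclotomic structure, and applying the Nikolaus-Scholze formula to this linear cyclotomic spectrum evaluates directly to $\THH(A,M)[1]$, with the shift produced by the norm/Tate fiber sequence $X_{hS^1} \to X^{hS^1} \to X^{tS^1}$. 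Compatibility with the cyclotomic trace and with Theorem \ref{t:k} is automatic from functoriality, since both derivatives arise from the same weight-$1$ linearization mediated by the trace map on $\THH$.

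Part (2) then follows from a refinement of this connectivity analysis: the weight $n \geq 1$ contribution to $\TC_{red}(\SqZero(A,M))$ lands in $\Sp^{\leq -1}$ (and in fact in $\Sp^{\leq -n}$), so the sum does too. The weight $n=0$ piece reproduces $\TC(A)$ exactly and is killed by passage to $\TC_{red}$. The key technical input is that on uniformly connective cyclotomic spectra, the negative homotopy of $X^{hS^1}$ and $X^{tC_p}$ cancels after forming the canonical-minus-Frobenius difference, so the Nikolaus-Scholze fiber preserves the connectivity bound up to the single degree shift visible in the weight-$1$ calculation.

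For part (1), commutation with sifted colimits of the reduced functor is immediate from the weight decomposition, since each $\THH^{(n)}$ is sifted-continuous in $M$, and the Nikolaus-Scholze fiber commutes with sifted colimits on uniformly connective inputs (guaranteed by part (2) and left-completeness). The pseudo-extensibility closure condition requires that every $\vph \in \sS_\psi$ maps $\sC^{\leq 0}$ into $\sD^{\leq 0}$; the refined bound of $\Sp^{\leq -n}$ on weight-$n$ pieces is strong enough to absorb the $[-1]$ shift in the recursive definition of $\sS_\psi$, so each iterated bilinear obstruction $B_\vph(\sF,-)[-1]$ stays in $\Sp^{\leq 0}$. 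The main obstacle in the argument is precisely this weightwise connectivity bookkeeping and the application of Lemma \ref{l:ext'n-left-complete} to pass to the limit over all weights, since the $\TC$ of the infinite weight decomposition must be presented as such a limit of finite-weight truncations to apply that lemma; once this is in place, parts (1)-(3) follow from the pieces assembled above.
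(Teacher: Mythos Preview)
Your overall strategy matches the paper's: everything is extracted from Theorem \ref{t:tc-calc}, the filtration with $\gr_n \TC_{red} \simeq \tr_{\sC}(T^n)_{h\bZ/n}$, and Lemma \ref{l:ext'n-left-complete}. The paper works in the categorical setup $\TwoEnd(\sC)$ with the $t$-structure $T \in \TwoEnd(\sC)^{\leq 0} \Leftrightarrow \tr_{\sC}(T^n) \in \Sp^{\leq -n}$, shows each $T \mapsto \tr_{\sC}(T^n)^{\bZ/n}$ is sifted-continuous and extensible (by induction via Lemma \ref{l:norm-cofib}), identifies the derivative of $T \mapsto T^n$ as zero for $n \geq 2$ by observing $T^n \to \Omega((\Sigma T)^n) = \Sigma^{n-1} T^n$ is null, and then passes to the limit with Lemma \ref{l:ext'n-left-complete}. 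Your weight-by-weight connectivity bookkeeping amounts to the same thing; a couple of your attributions are slightly off (the $[1]$ in $\THH(A,-)[1]$ comes from $T_M = M[1]\otimes_A-$, not from the norm sequence, and the individual weight pieces are not cyclotomic spectra on their own since Frobenius sends weight $n$ to weight $pn$), but these do not break the argument.

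The genuine gap is in part \eqref{i:tc-deriv}: the compatibility of the derivative identification with the cyclotomic trace and Theorem \ref{t:k} is \emph{not} automatic from functoriality. The paper devotes roughly half of its proof of Theorem \ref{t:tc} to this point. One must show that the square
\[
\xymatrix{
K(\SqZero(\sC,T)^c) \ar[r] \ar[d] & K(\sC^{T,c}) \ar[d] \\
\TC(\SqZero(\sC,T)) \ar[r] & \tr_{\sC}(T)
}
\]
commutes, where the right and bottom arrows are the respective maps to Goodwillie derivatives. The paper reduces (via Lemma \ref{l:ct=sqzero}, Lemma \ref{l:nat-trans-sp}, and Lemma \ref{l:dk-init}) to checking that for $(\sF,\eta) \in \SqZero(\sC,T)^c$, both legs send this object to the trace point $\tr_{\sF}(\eta) \in \Omega^{\infty}\tr_{\sC}(T)$ of Example \ref{e:trace}; this requires unwinding the Dennis trace through the explicit isomorphism of Proposition \ref{p:thh} and projecting to the $n=1$ summand. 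Your sentence ``both derivatives arise from the same weight-$1$ linearization mediated by the trace map on $\THH$'' gestures at the right idea but does not supply this verification.
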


\begin{rem}\label{r:k-conn}

Throughout these notes, $K$ denotes \emph{connective} $K$-theory.
Adapting \cite{beilinson} Lemma 2.3 to the setting of 
connective ring spectra justifies omitting 
negative $K$-groups.

\end{rem}

\subsection{}

Let us assume the above theorems for now and deduce 
Theorem \ref{t:dgm} in the split square-zero case.

Define the functor:

\[
\begin{gathered}
\Psi^{DGM}: \Alg_{conn} \to \Sp \\
A \mapsto \Coker(K(A) \to \TC(A))
\end{gathered}
\]

\noindent where the map $K(A) \to \TC(A)$ is the
cyclotomic trace map. 
It suffices to show $\Psi^{DGM}$ satisfies
the hypotheses of Theorem \ref{t:conn-algs} and
Corollary \ref{c:full-vanishing}.

$\Psi_{red}^{DGM}(\SqZero(A,-)):A\bimod^{\leq 0} \to \Sp$ is
pseudo-extensible by Theorem \ref{t:k} \eqref{i:sifted}
and Theorem \ref{t:tc} \eqref{i:ps-ext}
as this property is preserved under cokernels.
Moreover, this functor maps into $\Sp^{\leq 0}$
by Theorem \ref{t:tc} \eqref{i:tc-conn} (c.f. Remark \ref{r:k-conn}).
Finally, its Goodwillie derivative vanishes
by Theorem \ref{t:k} \eqref{i:trace} and 
Theorem \ref{t:tc} \eqref{i:tc-deriv}.

\section{K-theory}\label{s:k}

\subsection{}

In this section, we prove Theorem \ref{t:k}.
It is convenient in working with $K$-theory
to generalize to a categorical setting, and we do so
in what follows.

\subsection{Split square-zero extensions categorically}

First, we interpret the theory of 
split square-zero extensions in the categorical 
setting.

Suppose $\sC \in \StCat_{cont}$ and $T:\sC \to \sC \in \StCat_{cont}$ 
is a (continuous, exact) endomorphism.

\begin{defin}

$\SqZero(\sC,T)$ is the category of pairs $\sF \in \sC$ and $\eta:\sF \to T(\sF)$ a 
\emph{locally nilpotent} endomorphism, i.e., the colimit of the diagram:

\[
\sF \xar{\eta} T(\sF) \xar{T(\eta)} T(\sF) \xar{T^2(\eta)} \ldots 
\]

\noindent is zero.

\end{defin}

\begin{prop}\label{p:sqzero-comparison}

Suppose $A \in \Alg$ and $M \in A\bimod$. Let $T_M \coloneqq (M[1] \otimes_A -):
A \mod \to A\mod$. Then there is a canonical equivalence:

\[
\SqZero(A,M)\mod \simeq \SqZero(A\mod,T_M)
\]

\noindent such that the diagram:

\[
\xymatrix{
\SqZero(A,M)\mod \ar[rr] \ar[dr]_{A \underset{\SqZero(A,M)}{\otimes} -} 
&& \SqZero(A\mod,T_M) \ar[dl]^{(\sF,\eta) \mapsto \sF} \\
& A\mod
}
\]

\noindent commutes.

\end{prop}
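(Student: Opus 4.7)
My plan is to construct the equivalence $\Phi: \SqZero(A,M)\mod \to \SqZero(A\mod, T_M)$ from a canonical fiber sequence, then verify it is an equivalence via a Barr-Beck-Lurie argument on the associated comonad. Given $\sN \in \SqZero(A,M)\mod$, set $\sF_{\sN} := A \otimes_{\SqZero(A,M)} \sN$. The fiber sequence of $\SqZero(A,M)$-bimodules $M \to \SqZero(A,M) \to A$, in which $M$'s bimodule structure factors through $A$ on both sides thanks to the identity $M \cdot M = 0$, yields upon tensoring with $\sN$ a fiber sequence of left $A$-modules
$$
M \otimes_A \sF_{\sN} \to \sN \to \sF_{\sN},
$$
using the identification $M \otimes_{\SqZero(A,M)} \sN \simeq M \otimes_A \sF_{\sN}$ (valid since the right self-action of $M$ is trivial). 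The connecting map defines $\eta_{\sN} : \sF_{\sN} \to M[1] \otimes_A \sF_{\sN} = T_M(\sF_{\sN})$, and I set $\Phi(\sN) := (\sF_{\sN}, \eta_{\sN})$; commutativity of the displayed diagram is immediate from the construction.

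Local nilpotence of $\eta_{\sN}$ I would verify by reduction to free modules. When $\sN = \SqZero(A,M) \otimes_A \sG$ is free on an $A$-module $\sG$, the fiber sequence is canonically split, so $\eta_{\sN} = 0$ and local nilpotence is trivial. For general $\sN$, present it as a sifted colimit of free modules (e.g., the bar resolution for the adjunction along $A \to \SqZero(A,M)$); then $\Phi$ and the telescope $\colim_n T_M^n$ both preserve sifted colimits, so local nilpotence propagates.

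For the equivalence itself, note that $p_! := A \otimes_{\SqZero(A,M)} -$ is left adjoint to the restriction $p^* : A\mod \to \SqZero(A,M)\mod$ along the projection $\SqZero(A,M) \to A$. It is continuous and conservative (the fiber sequence forces $\sN = 0$ whenever $\sF_{\sN} = 0$), so the comonadic form of Barr-Beck-Lurie identifies $\SqZero(A,M)\mod$ with comodules over the comonad $p_!p^*$ on $A\mod$. A recursive unrolling of the same fiber sequence applied to $p^*\sF$ (whose trivial $M$-action splits the sequence) gives $p_!p^*\sF \simeq \bigoplus_{n \geq 0} T_M^n(\sF)$, exhibiting $p_!p^*$ as a ``cofree on $T_M$'' comonad. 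The category of comodules over it is then precisely $\SqZero(A\mod, T_M)$: a coaction $\sF \to \bigoplus_n T_M^n\sF$ is, by counit and coassociativity, determined by and determines its generating component $\eta : \sF \to T_M(\sF)$, with local nilpotence of $\eta$ matching convergence of the direct sum.

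The main obstacle is making this last identification rigorous — showing comodules over $p_!p^* = \bigoplus_n T_M^n$ are in canonical equivalence with pairs $(\sF, \eta)$ satisfying local nilpotence. One must unpack the comonad structure carefully: the counit fixes the $n=0$ component, coassociativity determines the higher components of the coaction from $\eta$, and local nilpotence of $\eta$ is what permits the iterated coaction to land in the direct sum (rather than merely a limit). This is a mild instance of bar-cobar (Koszul) duality, made tractable by the explicit cofree form of the comonad in this split square-zero setting.
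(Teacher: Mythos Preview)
Your construction of the functor $\Phi$ via the boundary map of the fiber sequence, and your verification of local nilpotence by reduction to free modules, agree with the paper. The divergence is in how you establish that $\Phi$ is an equivalence, and here your argument has a genuine gap.

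You invoke the comonadic form of Barr--Beck--Lurie for the left adjoint $p_! = A \otimes_{\SqZero(A,M)} -$, citing that it is continuous and conservative. But those hypotheses are the ones for \emph{monadicity} of a right adjoint; for comonadicity of a left adjoint one needs $p_!$ to preserve $p_!$-split totalizations, and a colimit-preserving functor between stable categories has no reason to preserve such limits. (Concretely, $A$ is typically not perfect over $\SqZero(A,M)$, so $p_!$ does not commute with limits.) Comonadicity of $p_!$ is in fact equivalent to the convergence of the cobar resolution, which in turn is essentially the statement you are trying to prove. Your ``main obstacle'' --- identifying comodules over the cofree comonad $\bigoplus_{n \geq 0} T_M^n$ with locally nilpotent pairs $(\sF,\eta)$ --- is therefore not a mild bookkeeping step but a genuine instance of Koszul duality, and it is where the content lies.

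The paper sidesteps both issues by using the \emph{other} map $s:A \to \SqZero(A,M)$, the section rather than the projection. Restriction $s^*$ along $s$ corresponds on the categorical side to $(\sF,\eta) \mapsto \Ker(\eta)$; its left adjoint is $\sF \mapsto (\sF,0)$. This is a conservative right adjoint that also commutes with colimits, so it is monadic by Barr--Beck with no further work. The resulting monad is the two-term functor $\sF \mapsto \sF \oplus T_M(\sF)[-1]$ rather than your infinite sum, and the paper finishes by observing that $\Phi$ intertwines the left adjoints, hence induces an isomorphism of monads. No Koszul-type identification is needed. The moral: when a split square-zero extension is in play, the section gives a much smaller monad than the projection gives a comonad.
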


\begin{proof}

We construct the functor:

\[
F:\SqZero(A,M)\mod \to \SqZero(A\mod,T_M)
\]

\noindent as follows. Suppose $N \in \SqZero(A,M)\mod$.
We must have $F(N) = A \otimes_{\SqZero(A,M)} N$ as objects of
$A\mod$; it remains to define the map $\eta$ (in the above notation).
This map is the boundary for the obvious exact triangle:

\[
T_M(F(N))[-1] = M \underset{A}{\otimes} F(N) \to N \to F(N) \xar{+1}
\]

\noindent where $N$ is regarded as an $A$-module through the section
$A \to \SqZero(A,M)$. In the case $N = \SqZero(A,M)$, one
sees that this triangle is split, so $\eta$ is $0$; this implies
in general that $\eta$ is locally nilpotent.

Now observe that the diagram:

\[
\xymatrix{
\SqZero(A,M)\mod \ar[rr]^F \ar[dr] & & 
\SqZero(A\mod,T_M) \ar[dl]^{(\sF,\eta) \mapsto \Ker(\eta)} \\
& A\mod
}
\]

\noindent tautologically commutes, where the left arrow
is restriction along the morphism $A \to \SqZero(A,M)$.

To show that $F$ is an equivalence, it 
suffices to show that each of the above functors to $A\mod$ is monadic
and the induced functor of monads is an isomorphism.
The left functor is tautologically monadic. The right functor
admits the left adjoint $\sF \mapsto (\sF, \eta = 0)$;
it obviously commutes with colimits and is conservative
by local nilpotence of $\eta$. Recall that to check the induced
map of monads is an isomorphism, it is enough to see that 
$F$ is intertwined by the left adjoints to the vertical arrows,
which is evident. 

Moreover, this equivalence also clearly makes the diagram
from the proposition commute.

\end{proof}

We also use the following observation.

\begin{prop}\label{p:sqzero-cpt}

In the above setting, suppose that $\sC$ is compactly generated.
Then $\SqZero(\sC,T)$ is compactly generated. An object
of $\SqZero(\sC,T)$ is compact if and only if the underlying
object of $\sC$ is. 

\end{prop}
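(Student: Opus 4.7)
The plan is to identify an adjunction linking $\sC$ and $\SqZero(\sC, T)$ through which compact generation can be transported. Define $L : \sC \to \SqZero(\sC, T)$ by $L\sF = (\sF, 0)$ with zero structure map; local nilpotence is automatic since the mapping telescope of a zero sequence vanishes. Unpacking the $\infty$-categorical hom, a map $(\sF, 0) \to (\sG, \mu)$ is a map $g : \sF \to \sG$ together with a nullhomotopy of $\mu \circ g$, equivalently a lift of $g$ through $\Ker(\mu) \to \sG$. So $L$ admits a right adjoint $R : (\sG, \mu) \mapsto \Ker(\mu)$.

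The main technical step, and the one place where local nilpotence is used essentially, is to show $R$ is conservative. Suppose $\Ker(\mu) = 0$; stability of $\sC$ forces $\mu$ to be an equivalence, so each transition map $T^n\mu$ in the system $\sG \xrightarrow{\mu} T\sG \xrightarrow{T\mu} T^2\sG \to \cdots$ is an equivalence. A filtered colimit along equivalences identifies with any vertex, so the displayed colimit equals $\sG$, which local nilpotence says is zero. Furthermore $R$ commutes with filtered colimits: $\Ker$ is a finite limit, and colimits in $\SqZero(\sC, T)$ are computed on underlying objects, because local nilpotence is preserved under colimits using the continuity of $T$. Combining these, $L$ sends compact objects to compact objects and the $\{L\sF : \sF \in \sC^c\}$ form a set of compact generators, so $\SqZero(\sC, T)$ is compactly generated.

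For the compactness criterion: if $\sF \in \sC$ is compact, then for any $\eta$ the functor $\Hom_{\SqZero(\sC, T)}((\sF, \eta), -)$ is the fiber of $\Hom_\sC(\sF, -) \to \Hom_\sC(\sF, T(-))$, hence commutes with filtered colimits by compactness of $\sF$ and the fact that finite limits commute with filtered colimits. Conversely, every compact of $\SqZero(\sC, T)$ lies in the thick subcategory generated by the $L\sF$'s with $\sF \in \sC^c$; applying the forgetful functor $U : \SqZero(\sC, T) \to \sC$, $(\sF, \eta) \mapsto \sF$ -- which commutes with colimits by the same argument and satisfies $U \circ L = \on{id}_\sC$ -- exhibits the underlying object of any such compact as a retract of a finite iterated cofiber of objects of $\sC^c$, and hence as itself compact.
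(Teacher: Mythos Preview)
Your proof is correct and follows essentially the same approach as the paper: both use the adjunction $L\dashv R$ with $L\sF=(\sF,0)$ and $R(\sG,\mu)=\Ker(\mu)$, the equalizer/fiber description of Homs, and local nilpotence to see that $R$ is conservative (equivalently, that the $(\sF,0)$ with $\sF$ compact generate). You additionally spell out the ``only if'' direction of the compactness criterion via the thick subcategory argument and the colimit-preserving forgetful functor, which the paper leaves implicit.
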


\begin{proof}

Suppose $(\sF,\eta_{\sF}), (\sG,\eta_{\sG}) \in \SqZero(\sC,T)$.
Note that:

\[
\ul{\Hom}_{\SqZero(\sC,T)}\big((\sF,\eta_{\sF}), (\sG,\eta_{\sG})\big) =
\on{Eq} \big(\ul{\Hom}_{\SqZero(\sC,T)}(\sF,\sG) \rightrightarrows
\ul{\Hom}_{\SqZero(\sC,T)}(\sF,\sG)\big)
\]

\noindent where the two maps in the equalizer 
are composition with $\eta_{\sF}$ and
$\eta_{\sG}$ respectively. This implies that if $\sF$
is compact in $\sC$, then $(\sF,\eta_{\sF})$ is compact
in $\SqZero(\sC,T)$ (as colimits commutes with finite limits in $\sC$).

We claim that $\SqZero(\sC,T)$ is compactly generated by
objects $(\sF,\eta = 0)$ for $\sF$ compact in $\sC$.
Indeed, suppose $(\sG,\eta_{\sG}) \in \SqZero(\sC,T)$ receives
only the zero map from such objects. This implies $\Ker(\eta_{\sG}) = 0$,
as:

\[
\ul{\Hom}_{\SqZero(\sC,T)}\big((\sF,0), (\sG,\eta_{\sG})\big) =
\ul{\Hom}_{\sC}(\sF,\Ker(\eta_{\sG})).
\]

\noindent Then local nilpotence of $\eta_{\sG}$ implies
$\sG = 0$, as desired.

\end{proof}

\subsection{Variant}\label{ss:ct}

Here is a sort of alternative to the square-zero extension construction above,
which is more convenient for our purposes.

Let $\sC$ be a compactly generated
stable category and let $T:\sC \to \sC \in \StCat_{cont}$ be an endomorphism.
We define $\sC^T$ as $\Ind(\sC^{T,c})$ where
$\sC^{T,c}$ is the category of pairs $(\sF,\eta)$ with
$\sF \in \sC^c$ and $\eta:\sF \to T(\sF)$. We remark that there
is no local nilpotence hypothesis here. 

\begin{example}\label{e:t=id}

For $k$ a field and $\sC = k\mod$ and $T = \id$, there
are natural identifications:

\[
\begin{gathered}
\SqZero(\sC,T) = 
\Ker(k[t]\mod \xar{k(t) \underset{k[t]}{\otimes} -} k(t)\mod) \\
\sC^T = 
\Ker(k[t]\mod \xar{k[t,t^{-1}] \underset{k[t]}{\otimes} -} k[t,t^{-1}]\mod).
\end{gathered}
\]

\end{example}

\subsection{}

We have the following compatibility in the above setting.
Note that there is always a fully-faithful functor
$\SqZero(\sC,T) \to \sC^T$ preserving compact objects.

\begin{lem}\label{l:ct=sqzero}

For connective $A$ and $M \in A\bimod^{\leq 0}$, 
the natural functor:

\[
\SqZero(A\mod,T_M) \to A\mod^{T_M}
\]

\noindent is an equivalence of categories.

In particular, $A\mod^{T_M}$ is canonically equivalent
to $\SqZero(A,M)\mod$.

\end{lem}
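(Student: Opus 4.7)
The plan is to use the fact that the inclusion $\SqZero(A\mod,T_M) \hookrightarrow A\mod^{T_M}$ is fully faithful and preserves compact objects (the former by construction of the two categories, the latter since the compact objects of both sides are generated by those $(\sF,\eta)$ with $\sF \in A\mod^c$, using Proposition \ref{p:sqzero-cpt}). Both categories are compactly generated, so to show the inclusion is an equivalence it is enough to check that every compact object $(\sF,\eta) \in A\mod^{T_M,c}$, i.e., a pair with $\sF \in A\mod^c$ and any morphism $\eta: \sF \to T_M(\sF)$, automatically has $\eta$ locally nilpotent. The ``in particular'' clause will then follow by composing with Proposition \ref{p:sqzero-comparison}.

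The essential input is a cohomological amplitude estimate. Because $M \in A\bimod^{\leq 0}$, we have $M[1] \in A\bimod^{\leq -1}$, so the functor $T_M = M[1] \otimes_A -$ carries $A\mod^{\leq j}$ into $A\mod^{\leq j-1}$ for every $j$. A compact object $\sF$ of $A\mod$ is perfect, hence bounded: $\sF \in A\mod^{\leq n}$ for some $n \in \bZ$. Iterating, $T_M^k(\sF) \in A\mod^{\leq n-k}$ for all $k \geq 0$.

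Now consider the telescope
\[
\on{tel}(\sF,\eta) \coloneqq \colim\bigl(\sF \xar{\eta} T_M(\sF) \xar{T_M(\eta)} T_M^2(\sF) \to \cdots\bigr).
\]
The $t$-structure on $A\mod$ is compatible with filtered colimits (since $\pi_i$ commutes with filtered colimits), so for any fixed $k_0 \geq 0$, the cofinal tail $\colim_{k \geq k_0} T_M^k(\sF)$ lies in $A\mod^{\leq n-k_0}$, and equals the full telescope. Hence $\on{tel}(\sF,\eta) \in \bigcap_{m} A\mod^{\leq -m}$. Since the standard $t$-structure on $A\mod$ (for $A$ a connective $\sE_1$-algebra) is left separated — an object with $\pi_i = 0$ for all $i \in \bZ$ vanishes — this intersection is zero, so $\eta$ is locally nilpotent.

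There is no real obstacle; the whole argument is a cohomological amplitude chase, and the only thing to check carefully is that the bookkeeping of the $t$-structure conventions (cohomological, left separated, compatible with filtered colimits) is in force for $A\mod$ over a connective $\sE_1$-algebra, which is standard.
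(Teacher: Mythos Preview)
Your argument is correct and is essentially the same as the paper's: both reduce to showing that for $\sF \in A\mod^c$ any map $\eta:\sF \to T_M(\sF)$ is automatically locally nilpotent, and both verify this by noting that $T_M$ lowers cohomological degree by one while $\sF$ is bounded above, so the telescope lies in $\bigcap_m A\mod^{\leq -m} = 0$. The paper states this more tersely, but the content is identical.
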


\begin{proof}

It suffices to observe that for any $\sF \in A\mod^c$,
any map $\eta:\sF \to T_M(\sF)$ is automatically
locally nilpotent as $\colim_n T_M^n(\sF) = 0$
(since $\sF$ is bounded above and $T_M = M \otimes_A -[1]$ 
lowers cohomological degrees by $1$).

The second point follows from 
Proposition \ref{p:sqzero-comparison}.

\end{proof}

\begin{rem}

Note that there is no contradiction with
Example \ref{e:t=id}: there $\id_{\sC} = T_{k[-1]}$
and $k[-1]$ is not connective.

\end{rem}

\subsection{Structural features of $K$-theory}

We now prove the first point of Theorem \ref{t:k}.

\begin{proof}[Proof of Theorem \ref{t:k} \eqref{i:sifted}]

To prove the extensibility, define:

\[
\begin{gathered}
\widetilde{K}_A: A\bimod^{\leq 1} \to \Sp^{\leq 0} \subset \Sp \\
\widetilde{K}_A(M) \coloneqq K(A\mod^{T_M,c}).
\end{gathered}
\]

For $M \in A\bimod^{\leq 0}$, note that
$\widetilde{K}_A(M) = K(\SqZero(A,M))$ by Lemma \ref{l:ct=sqzero}.
Therefore, we need only to show that $\widetilde{K}_A$ commutes with
sifted colimits. 

Let $\Proj(A) \subset A\mod^c$ denote the full subcategory
of \emph{(finitely-generated) projective} $A$-modules,
i.e., the full subcategory of $A\mod^c$ consisting of summands of $A^{\oplus n}$ 
for some $n \in \bZ^{\geq 0}$. 
Let $\Proj(A)^{T_M} \subset A\mod^{T_M,c}$ be the full
subcategory of pairs $(\sF,\eta)$ with $\sF \in \Proj(A)$.

Note that $\Proj(A)^{T_M}$ is an exact\footnote{In the
higher categorical sense: see \cite{barwick-exact} for an introduction
in this setup.} category. 
Standard\footnote{See \cite{fontes} for a general format
for such problems. In particular, the main theorem of \emph{loc. cit}.
implies our present claim. We thank Thomas Nikolaus for directing us
to this reference.} arguments show that
$K(\Proj(A)^{T_M}) \isom \widetilde{K}_A(M)$,
where the left hand side indicates Waldhausen $K$-theory of this
exact category. (It is essential that $T_M$ is right $t$-exact
here.)

Because $\Omega^{\infty}: \Sp^{\leq 0} \to \Gpd$ commutes with sifted
colimits (as follows from thinking of connective spectra as
group-like $\sE_{\infty}$-monoids), it suffices 
to show that the functor:

\[
\Omega^{\infty}\widetilde{K}_A: A\bimod^{\leq 1} \to \Gpd
\]

\noindent commutes with sifted colimits, or just as well,
that $\Omega^{\infty-1}\widetilde{K}_A$ does.
The latter is the geometric realization of Waldhausen's 
$S_{\dot}$ construction, so it suffices to show the individual
terms of the $S_{\dot}$ construction commute with sifted
colimits in $M$ here.

To simplify the notation, we explain why
$M\mapsto S_2(\Proj(A)^{T_M}) \in \Gpd$ commutes
with sifted colimits: the general case is the
same with more notation.

First, recall that $S_2(\Proj(A))$ is the
groupoid of data $\sF_1,\sF_2 \in \Proj(A)$
and a map $f:\sF_1 \to \sF_2$ 
with $\Coker(f) \in \Proj(A) \subset A\mod$.

Then we similarly have:

\[
S_2(\Proj(A)^{T_M}) = 
\underset{(f:\sF_1 \to \sF_2) \in S_2(\Proj(A))} {\colim} \,
\Hom_{A\mod}(\sF_1,T_M(\sF_1)) 
\underset{\Hom_{A\mod}(\sF_1,T_M(\sF_2)) }{\times} 
\Hom_{A\mod}(\sF_2,T_M(\sF_2)).
\]

\noindent Therefore, it suffices 
to show that for every point
$(f:\sF_1 \to \sF_2) \in S_2(\Proj(A))$,
the above expression commutes with sifted colimits in $M$.

Note that:

\[
\begin{gathered}
\Hom_{A\mod}(\sF_1,T_M(\sF_1)) 
\underset{\Hom_{A\mod}(\sF_1,T_M(\sF_2)) }{\times} 
\Hom_{A\mod}(\sF_2,T_M(\sF_2)) = \\
\Omega^{\infty} \Big(
\ul{\Hom}_{A\mod}(\sF_1,T_M(\sF_1)) 
\underset{\ul{\Hom}_{A\mod}(\sF_1,T_M(\sF_2)) }{\times} 
\ul{\Hom}_{A\mod}(\sF_2,T_M(\sF_2)) 
\Big).
\end{gathered}
\]

\noindent Before passing to $\Omega^{\infty}$,
this expression clearly commutes with all colimits 
in $M$: this follows from compactness of the $\sF_i$
and the fact that $A\mod$ is stable.

As $\Omega^{\infty}:\Sp^{\leq 0} \to \Gpd$ 
is conservative and commutes
with sifted colimits, it suffices to show that
this fiber product lies in $\Sp^{\leq 0}$.
Each term in the fiber product is connective
because $\sF_i \in \Proj(A)$ and
$T_M(\sF_i) \in A\mod^{\leq 0}$. Then the
fiber product is connective because
$\ul{\Hom}(\sF_2,T(\sF_2)) \to 
\ul{\Hom}(\sF_1,T(\sF_2))$ is surjective
on $H^0$ (because $\Coker(f) \in \Proj(A)$). 

\end{proof}

\subsection{Derivative of $K$-theory}

We now give the calculation of Goodwillie derivatives in
an appropriate categorical setup.

\subsection{}

For $(\sC,T)$ as in \S \ref{ss:ct},
let $K_{\sC}(T) \in \Sp$ be the (connective) $K$-theory
of $\sC^{T,c}$. 

We have the following easy result.

\begin{lem}\label{l:kc-filt}

For every compactly generated $\sC$, 
the functor $K_{\sC}:\TwoEnd_{\StCat_{cont}}(\sC) \to \Sp$
commutes with filtered colimits.

\end{lem}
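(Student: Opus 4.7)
The plan is to reduce to the standard fact that connective $K$-theory of essentially small stable categories commutes with filtered colimits. Concretely, given a filtered diagram $\{T_i\}_{i \in \sI}$ in $\TwoEnd_{\StCat_{cont}}(\sC)$ with colimit $T$, I would identify $\sC^{T,c}$ with the filtered colimit $\colim_i \sC^{T_i,c}$ as essentially small stable categories; applying connective $K$-theory then yields the lemma.

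Recall that filtered colimits in $\TwoEnd_{\StCat_{cont}}(\sC)$ are computed value-wise, so $T(\sF) = \colim_i T_i(\sF)$ for each $\sF \in \sC$. For the essential surjectivity part of the desired equivalence: any object $(\sF, \eta: \sF \to T\sF) \in \sC^{T,c}$ has $\sF \in \sC^c$ compact, whence
\[
\Hom_\sC(\sF, T\sF) \;=\; \colim_i \Hom_\sC(\sF, T_i\sF),
\]
so $\eta$ factors through some $\eta^{(i)}: \sF \to T_i\sF$, providing a preimage $(\sF, \eta^{(i)}) \in \sC^{T_i,c}$. For the comparison of mapping spectra, fix such representatives $(\sF_a, \eta_a^{(i)})$ for $a = 1,2$. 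The mapping spectrum at level $T_i$ fits in the equalizer
\[
\ul\Hom_{\sC^{T_i,c}}\bigl((\sF_1,\eta_1^{(i)}), (\sF_2,\eta_2^{(i)})\bigr) \;=\; \on{Eq}\bigl(\ul\Hom_\sC(\sF_1,\sF_2) \rightrightarrows \ul\Hom_\sC(\sF_1, T_i\sF_2)\bigr),
\]
with the two maps given by $f \mapsto T_i(f) \circ \eta_1^{(i)}$ and $f \mapsto \eta_2^{(i)} \circ f$, and an analogous formula holds for $\sC^{T,c}$. Compactness of $\sF_1$ together with the commutation of finite limits with filtered colimits of spectra then yields the required equivalence of mapping spectra upon passing to $\colim_i$.

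Finally, I would invoke the standard fact that connective $K$-theory commutes with filtered colimits of essentially small stable categories; this can be verified directly by the same pattern as in the proof of Theorem \ref{t:k}\eqref{i:sifted}, i.e., by expressing $\Omega^\infty K$ as the geometric realization of Waldhausen's $S_\bullet$ construction and noting that geometric realization, $\Omega^\infty$ on $\Sp^{\leq 0}$, and each $S_n$ (a finite-limit construction on groupoids of compact objects) all commute with filtered colimits. The only subtle point is the compactness argument controlling the space of endomorphism data over a fixed compact $\sF \in \sC^c$; once that is in hand, the identification $\sC^{T,c} \simeq \colim_i \sC^{T_i,c}$, and hence the lemma, is essentially formal.
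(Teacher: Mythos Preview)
Your proposal is correct and takes essentially the same approach as the paper: the paper's proof is the one-liner ``$T \mapsto \sC^{T,c}$ commutes with filtered colimits, so the result follows from the commutation of $K$-theory and filtered colimits,'' and your argument simply unpacks the first claim (via compactness of $\sF$ and the equalizer formula for mapping spectra) and the second. Nothing is missing.
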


\begin{proof}

Clearly the functor
$T \mapsto \sC^{T,c}$ (as a functor to essentially small stable categories, say)
commutes with filtered colimits, so the result follows from the
commutation of $K$-theory and filtered colimits. 

\end{proof}

\subsection{}\label{ss:lax-func}

Note that $K_{\sC}$ has a little more functoriality.

Let $\StCat_{cg,endo}$ be the following 1-category.\footnote{Meaning
$(\infty,1)$-category, of course; we are distinguishing
here from an $(\infty,2)$-category.} 
Objects are pairs $(\sC,T)$ with 
$\sC$ a compactly generated stable category
and $T:\sC\to \sC$ a functor commuting with colimits.
Morphisms are lax commuting diagrams:

\[
\xymatrix{
\sC \ar[rr]^{T} \ar[d]^F & & \sC \ar[d]^F \ar@{=>}[dll]_{\vareps} \\
\sD \ar[rr]^{T^{\prime}} \ar[rr]  & & \sD
}
\]

\noindent (so $\vareps:F T \to T^{\prime} F$ is a natural
transformation) with $F$ preserving compact 
objects.\footnote{This does not completely define a structure
of category, of course: we have not
written compositions, never mind higher data. 
But all of this data is implicit in the standard $2$-categorical
structure on $\StCat_{cont}$; we refer to \cite{grbook} for
the appropriate formalism, including how to properly
define this category.}

Then $(\sC,T) \mapsto K_{\sC}(T)$ upgrades to a functor
out of $\StCat_{cg,endo}$. 
For a diagram as above, 
the map $K_{\sC}(T) \to K_{\sD}(T^{\prime})$
is induced by the functor:

\[
\begin{gathered}
\sC^{T,c} \to \sD^{T^{\prime},c} \\
(\sF,\eta:\sF \to T(\sF)) \mapsto 
\big(F(\sF),F(\sF) 
\xar{F(\eta)} FT(\sF) \xar{\vareps} T^{\prime}F(\sF)\big).
\end{gathered}
\]

\subsection{}

In the setting of \S \ref{ss:ct}, 
let $\partial K_{\sC}$ denote the Goodwillie derivative
of the functor $K_{\sC}:\TwoEnd_{\StCat_{cont}}(\sC) \to \Sp$;
although $K_{\sC}$ does not commute with sifted colimits,
the definition and construction of the 
Goodwillie derivative still apply
(c.f. \S \ref{ss:convention}).
Moreover, $\partial K_{\sC}$
commutes with arbitrary colimits by Lemma \ref{l:kc-filt}.

\subsection{}

By Lemma \ref{l:ct=sqzero}, the following result is
a generalization of Theorem \ref{t:k} \eqref{i:trace}.

\begin{thm}\label{t:k-deriv}

The functor $\partial K$ is canonically isomorphic to the
trace functor $\tr_{\sC}:\End(\sC) \to \Sp$. 

\end{thm}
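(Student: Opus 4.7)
I would construct a natural ``Dennis-style'' trace $K_\sC \to \tr_\sC$ of functors on $\End(\sC) = \sC \otimes \sC^\vee$, and then show it induces the desired equivalence upon taking Goodwillie derivatives. The map is built objectwise as follows: for $\sF \in \sC^c$ with $\eta: \sF \to T(\sF)$, the data of $\eta$ determines a morphism $\sF \boxtimes \bD\sF \to T$ in $\End(\sC)$, namely the natural transformation $\sF \otimes \ul{\Hom}_\sC(\sF,-) \Rightarrow T$ sending $\phi: \sF \to \sH$ to $T(\phi) \circ \eta$. Pushing forward the canonical class $\id_\sF \in \ul{\Hom}_\sC(\sF,\sF) = \tr_\sC(\sF \boxtimes \bD\sF)$ gives a point of $\tr_\sC(T)$, and packaging this construction over Waldhausen's $S_\bullet$-construction (and invoking additivity) upgrades the assignment to a natural map $K_\sC(T) \to \tr_\sC(T)$.

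Because $\tr_\sC$ is continuous and exact by construction (as the evaluation pairing for the duality of $\sC$), the universal property of the Goodwillie derivative factors the reduced Dennis trace as a natural transformation $\partial K_\sC \to \tr_\sC$ between continuous exact functors $\End(\sC) \to \Sp$. To verify this is an equivalence, both sides preserve all colimits, so it suffices to check on a family of generators of $\End(\sC) = \sC \otimes \sC^\vee$. The natural choice is $T = \sF \boxtimes \bD\sG$ for $\sF, \sG \in \sC^c$; on such $T$, the trace side computes directly to $\tr_\sC(T) = \ul{\Hom}_\sC(\sG, \sF)$ by the defining property of the counit.

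The main obstacle is the matching computation on the $K$-theory side: showing $\partial K_\sC(\sF \boxtimes \bD\sG) \simeq \ul{\Hom}_\sC(\sG, \sF)$ functorially in $\sF$ and $\sG$. This is essentially the original Dundas--McCarthy content. One approach is to analyze Waldhausen's $S_\bullet$-construction for $\sC^{\Sigma^n T, c}$ directly: its $n$-th term is the groupoid of flags in $\sC^c$ together with compatible $\eta$-data valued in $\Sigma^n T$. The filtration of Lemma \ref{l:filt} together with the connectivity estimates of Corollary \ref{c:tstr}, applied to the stabilization $\Omega^n K_{\sC,red}(\Sigma^n T)$, should isolate the linear-in-$\eta$ part while forcing the higher cross-effect contributions to die in the limit; direct inspection of the linear part recovers $\ul{\Hom}_\sC(\sG, \sF)$. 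A Morita-theoretic alternative is to present $\sC$ as $A\mod$ for a suitable connective $\sE_1$-algebra $A$ (where such a presentation exists) and invoke the classical Dundas--McCarthy computation for split square-zero extensions by free bimodules; then the general case follows from functoriality in the category $\sC$.
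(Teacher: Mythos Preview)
Your construction of the Dennis-style trace map $K_\sC \to \tr_\sC$ and its factorization through $\partial K_\sC$ is correct and is essentially how the paper obtains the comparison map (via Lemma~\ref{l:dk-init} applied to $\Phi = \tr_\sC$). The strategy of then checking the map is an equivalence on generators $\sF \boxtimes \bD\sG$ is sound in principle.

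However, the paper finishes by a different route: rather than computing $\partial K_\sC(\sF \boxtimes \bD\sG)$ directly, it shows that both $\partial K_\sC$ and $\tr_\sC$ satisfy the \emph{same universal property}. Lemma~\ref{l:dk-init} shows $\partial K$ is initial among additive, colimit-preserving-in-$T$ functors $\StCat_{cg,endo} \to \Sp$ equipped with a base-point at $(\Sp,\id_\Sp)$; the bulk of the proof of Theorem~\ref{t:k-deriv} then verifies that $\tr_\sC$ is \emph{also} initial for this data, by constructing the natural transformation $\tr_\sC \to \Phi$ for arbitrary such $\Phi$ using the cyclic-invariance property of traces (Step~\ref{st:vareps}). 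This bypasses any computation of $\partial K_\sC$ on individual objects entirely.

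Your proposed resolutions of the ``main obstacle'' both have genuine gaps. Approach~1 is too vague: the filtration of Lemma~\ref{l:filt} gives $\gr_i K_{\sC,red}(\Sigma T)$ as a summand of $K_{\sC,red}(T^{\oplus i})[i]$, but to show the $i \geq 2$ contributions die in the colimit defining $\partial K_\sC$ you would need quantitative connectivity estimates for $K$-theory (analyticity bounds in Goodwillie's sense), which you have not supplied and which are not lighter than the statement you are trying to prove; ``should isolate'' and ``direct inspection'' are not arguments here. Approach~2 is circular in the context of this paper: Theorem~\ref{t:k-deriv} \emph{is} the paper's proof of the Dundas--McCarthy identification of stable $K$-theory with $\THH$ (it is stated as a generalization of Theorem~\ref{t:k}~\eqref{i:trace}), so invoking the classical Dundas--McCarthy computation as a black box assumes the result. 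The paper's universal-property argument is precisely designed to avoid this computation.
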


The proof of this result occupies the remainder of this section.

\subsection{}

We will prove Theorem \ref{t:k-deriv} 
using the following convenient characterization 
of $\partial K$.

First, note that $K(\Sp)$ has a canonical 
base-point, i.e., there's a canonical
map $\bS \to K(\Sp) \in \Sp$ corresponding
to the point of $\Omega^{\infty} K(\Sp)$ which
is the class of the sphere spectrum. Similarly,
we have a canonical point of $K_{\Sp}(\id_{\Sp})$
defined by $(\bS,\id_{\bS}) \in \Sp^{\id_{\Sp},c}$.

We recall that the notation $\StCat_{cg,endo}$ introduced
in \S \ref{ss:lax-func}.

\begin{lem}\label{l:dk-init}

Suppose that we are given a functor
$\Phi: \StCat_{cg,endo} \to \Sp$, which we denote
by $(\sC,T) \mapsto \Phi_{\sC}(T)$. 
Suppose that we are given a base-point 
$x:\bS \to \Phi_{\Sp}(\id_{\Sp}) \in \Sp$.

Suppose moreover that:

\begin{itemize}

\item For every $\sC$, the functor $\Phi_{\sC}(-)$ 
commutes with colimits.

\item The functor $\Phi$ is \emph{additive} in the following
sense. Abuse notation in writing
$T$ for the endofunctor of
$\TwoHom(\Delta^1,\sC) = \{\sF \to \sG \in \sC\}$
sending $\sF \to \sG$ to $T(\sF) \to T(\sG)$.
Then we suppose (using the notation
of \S \ref{ss:ct}) that the map:

\[
\begin{gathered}
\TwoHom(\Delta^1,\sC)^T \to \sC \times \sC \\
\vcenter{\xymatrix{\sF \ar[d]^{\eta_{\sF}} \ar[r]^f & \sG \ar[d]^{\eta_{\sG}} \\ 
T(\sF) \ar[r] & T(\sG)} } \mapsto \big((\sF,\eta_{\sF}),(\Coker(f),\eta_{\Coker(f)})\big)
\end{gathered}
\]

\noindent induces an isomorphism 
$\Phi_{\TwoHom(\Delta^1,\sC)}(T) \isom 
\Phi_{\sC}(T) \times \Phi_{\sC}(T)$.

\end{itemize}

Then there is a unique natural transformation:

\[
\partial K_{\sC}(T) \to \Phi_{\sC}(T)
\]

\noindent of functors $\StCat_{cg,endo} \to \Sp$
equipped with a structure of based map when
evaluated on $(\Sp,\id_{\Sp})$. In particular,
$(\sC,T) \mapsto \partial K_{\sC}(T)$ is initial with respect 
to the above data.

\end{lem}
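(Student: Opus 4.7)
The plan is to characterize $\partial K_{\sC}(T)$ via two successive universal properties: first the universal property of connective algebraic $K$-theory as the initial pointed additive invariant of essentially small stable categories (Blumberg--Gepner--Tabuada, \cite{bgt}), and second the universal property of the Goodwillie derivative as the initial colimit-preserving approximation.

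First I would verify that $K_{\sC}$ itself, regarded as a functor $\StCat_{cg,endo} \to \Sp$, satisfies the additivity hypothesis of the lemma and carries the required basepoint. Stability of $\sC^{T,c}$ is inherited from $\sC$, and one has a natural identification $\TwoHom(\Delta^1,\sC)^{T,c} \simeq \Fun(\Delta^1,\sC^{T,c})$; in the stable setting this is the Waldhausen $S_2$-construction, so Waldhausen's additivity theorem delivers the required equivalence $K_{\TwoHom(\Delta^1,\sC)}(T) \simeq K_{\sC}(T) \times K_{\sC}(T)$. The canonical basepoint at $(\Sp,\id_{\Sp})$ is supplied by the class of $(\bS,\id_{\bS}) \in \Sp^{\id_{\Sp},c}$, and Lemma \ref{l:kc-filt} provides filtered-colimit preservation in $T$.

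Next, for any $\Phi$ satisfying the hypotheses, I would produce a canonical basepoint-preserving natural transformation $K_{\sC}(T) \to \Phi_{\sC}(T)$ by invoking the universal property of connective $K$-theory: among pointed additive invariants of essentially small stable categories that preserve filtered colimits, connective $K$-theory is initial. Applied with $\sE = \sC^{T,c}$, this produces the desired map at each $(\sC,T)$, and the hypothesized functoriality of $\Phi$ along lax commuting squares in $\StCat_{cg,endo}$ promotes this pointwise family to a natural transformation of functors. Because $\Phi_{\sC}(-)$ commutes with \emph{all} colimits in $T$ (not just filtered ones), this natural transformation factors uniquely through the Goodwillie derivative $\partial K_{\sC}(T) \to \Phi_{\sC}(T)$ by the defining property of the derivative, producing the required natural transformation; uniqueness at each stage follows from the respective universal properties.

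The main obstacle is promoting the map $K_{\sC}(T) \to \Phi_{\sC}(T)$ from a compatible family of pointwise maps to a genuine natural transformation of functors on $\StCat_{cg,endo}$, which requires tracking compatibility with the lax squares $(\vareps\colon FT \Rightarrow T'F)$ constituting morphisms in $\StCat_{cg,endo}$. Concretely one realizes each compact pair $(\sF,\eta) \in \sC^{T,c}$ as a morphism $(\Sp,\id_{\Sp}) \to (\sC,T)$ in $\StCat_{cg,endo}$ (underlying functor $V \mapsto V \otimes \sF$ via the $\Sp$-module structure on $\sC$, with lax datum $\vareps$ supplied by $\eta$), pulls the basepoint $x$ back along such morphisms, and uses the additivity of $\Phi$ to glue these contributions into a map out of $K_{\sC}(T)$.
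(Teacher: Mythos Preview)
Your overall strategy---produce a pointed natural transformation $K_{\sC}(T) \to \Phi_{\sC}(T)$ and then factor through the Goodwillie derivative---is exactly the paper's, and your final paragraph sketches precisely the construction the paper carries out: realize each $(\sF,\eta) \in \sC^{T,c}$ as a morphism $(\Sp,\id_{\Sp}) \to (\sC,T)$ in $\StCat_{cg,endo}$, push the basepoint $x$ forward to get $\sC^{T,c,\simeq} \to \Omega^{\infty}\Phi_{\sC}(T)$, use additivity together with the Waldhausen $S_{\dot}$-construction to factor through $\Omega^{\infty}K_{\sC}(T)$, then upgrade to spectra (the paper does this via its Lemma~\ref{l:nat-trans-sp}) and pass to the derivative.

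The one genuine wrinkle is your middle paragraph, where you invoke the Blumberg--Gepner--Tabuada universal property as a black box ``applied with $\sE = \sC^{T,c}$''. This does not literally apply: BGT characterizes $K$ as initial among additive invariants of essentially small stable categories, but $\Phi$ is given as a functor on $\StCat_{cg,endo}$, not as an invariant of $\sC^{T,c}$ alone. Nothing in the hypotheses tells you that $\Phi_{\sC}(T)$ depends only on the stable category $\sC^{T,c}$, so there is no additive invariant $F$ on $\Cat_{\infty}^{\mathrm{ex}}$ to which you can feed $\sE = \sC^{T,c}$. What you actually need---and what your final paragraph correctly outlines---is to rerun the \emph{proof} of BGT's universal property in this slightly different context rather than cite the theorem. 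Once you make that adjustment, your argument and the paper's coincide; in particular, the ``main obstacle'' you flag (naturality in the lax squares) is not a separate difficulty but is handled automatically because the construction $\sC^{T,c,\simeq} \to \Omega^{\infty}\Phi_{\sC}(T)$ is itself built from the functoriality of $\Phi$ on $\StCat_{cg,endo}$.
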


\begin{proof}

Suppose $(\sC,T) \in \StCat_{cg,endo}$.
By functoriality of $\Phi$ and using its base-point,
we have a canonical map:

\[
\Hom_{\StCat_{cg,endo}}((\Sp,\id_{\Sp}),(\sC,T)) =
\sC^{T,c,\simeq} \to 
\Hom_{\Sp}(\Phi_{\Sp}(\id_{\Sp}),\Phi_{\sC}(T)) \to 
\Omega^{\infty} \Phi_{\sC}(T) \in \Gpd.
\]

\noindent By additivity of $\Phi$ and the Waldhausen
construction of $K$-theory, this map factors through
a canonical map from $\Omega^{\infty}K_{\sC}(T)$,
and by Lemma \ref{l:nat-trans-sp} this
canonically upgrades to a map of spectra
$K_{\sC}(T) \to \Phi_{\sC}(T)$. Finally, by
definition of the Goodwillie derivative, 
this natural transformation factors through
$\partial K_{\sC}(T) \to \Phi_{\sC}(T)$. Clearly
this construction is natural in $(\sC,T)$, giving the claim.

\end{proof}

We used the following result in the course
of the proof, which we explicitly record for clarity.

\begin{lem}\label{l:nat-trans-sp}

For $\sD$ stable and $F,G: \sD \to \Sp$ exact
functors, natural transformations between $F$ and $G$ are the
same as natural transformations between the functors
$\Omega^{\infty}F,\Omega^{\infty}G: \sD \to \Gpd$.
That is, the natural map:

\[
\Hom_{\TwoHom(\sD,\Sp)}(F,G) \to 
\Hom_{\TwoHom(\sD,\Gpd)}(\Omega^{\infty}F,\Omega^{\infty} G)
\]

\noindent is an isomorphism.

\end{lem}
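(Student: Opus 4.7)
The plan is to construct a two-sided inverse to the displayed map, exploiting exactness of $F$ and $G$.

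Given a natural transformation $\alpha: \Omega^\infty F \to \Omega^\infty G$, for each $\sF \in \sD$ and each $n \geq 0$, evaluating $\alpha$ at $\Sigma^n \sF$ gives a map $\alpha_{\Sigma^n \sF}: \Omega^\infty F(\Sigma^n \sF) \to \Omega^\infty G(\Sigma^n \sF)$. Using the exactness isomorphisms $F(\Sigma^n \sF) \simeq \Sigma^n F(\sF)$ and similarly for $G$, we view this as a map of pointed groupoids $\Omega^\infty \Sigma^n F(\sF) \to \Omega^\infty \Sigma^n G(\sF)$, which is the $n$-th component of a prospective map of spectra $\widetilde\alpha_\sF: F(\sF) \to G(\sF)$. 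The naturality of $\alpha$ ensures these components are compatible with the structure maps of the spectra $F(\sF)$ and $G(\sF)$, which come from $\Omega^\infty \circ \Omega = \Omega \circ \Omega^\infty$ combined with the exactness isomorphisms; they thus assemble into the desired map, natural in $\sF$. The assignment $\alpha \mapsto \widetilde\alpha$ is a two-sided inverse to the displayed map: applying $\Omega^\infty$ to $\widetilde\alpha$ specializes to $\alpha$ at $n = 0$, while conversely $\widetilde{\Omega^\infty \beta}$ recovers any $\beta: F \to G$ because a map of spectra is determined by its effect on the underlying sequence of loop spaces.

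The only real obstacle is that the above must be carried out as an equivalence of $\Hom$-\emph{groupoids} and not merely a bijection on $\pi_0$. I would handle this by invoking the universal property of stabilization (\cite{higheralgebra} Corollary 1.4.2.23), which identifies the $\infty$-category of exact functors $\sD \to \Sp$ with the $\infty$-category of left-exact functors $\sD \to \Gpd_*$ via $F \mapsto \Omega^\infty F$. Since exactness is a property, the Hom-groupoid between $F$ and $G$ inside $\TwoHom(\sD, \Sp)$ agrees with that inside the full subcategory of exact functors; and since the base point on $\Omega^\infty F(\sF)$ is canonically determined by $F(0) = 0$, the Hom-groupoid between $\Omega^\infty F$ and $\Omega^\infty G$ in $\TwoHom(\sD, \Gpd)$ agrees with that in the $\infty$-category of left-exact functors into $\Gpd_*$. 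Combining these equivalences yields the claim.
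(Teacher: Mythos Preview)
Your proof is correct, but the route differs from the paper's. The paper gives a direct two-line argument: it writes
\[
\Hom_{\TwoHom(\sD,\Sp)}(F,G) = \underset{n}{\lim}\, \Hom_{\TwoHom(\sD,\Gpd)}\big(\Omega^{\infty}(F[n]),\Omega^{\infty}(G[n])\big)
\]
using $\Sp = \lim_n \Gpd_*$, and then observes that every structural map in this tower is an isomorphism because exactness gives $\Omega^{\infty}(F[n]) \simeq (\Omega^{\infty}F)\circ \Sigma_{\sD}^n$ and $\Sigma_{\sD}$ is an equivalence. You instead cite the universal property of stabilization (HA 1.4.2.23) as a black box to identify exact functors $\sD\to\Sp$ with left-exact functors $\sD\to\Gpd_*$, and then argue separately that the forgetful $\Gpd_*\to\Gpd$ induces an equivalence on mapping spaces between reduced functors out of a pointed category. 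Your first paragraph is essentially the heuristic underlying the paper's argument, but as you recognize, making it homotopy-coherent is exactly what the limit description (or HA 1.4.2.23) accomplishes.

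Both approaches need the passage from $\Gpd_*$ to $\Gpd$; you make it explicit (via $\sD$ having an initial object $0$ with $\Omega^\infty G(0)=*$, so $\Hom(\underline{*},\Omega^\infty G)=*$), while the paper leaves it implicit. The paper's version is more self-contained and slightly shorter; yours trades a short computation for a citation, which is fine if the reader has HA at hand.
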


\begin{proof}

We have:

\[
\Hom_{\TwoHom(\sD,\Sp)}(F,G) = 
\underset{n}{\lim} \, 
\Hom_{\TwoHom(\sD,\Gpd)}\big(\Omega^{\infty}(F[n]),
\Omega^{\infty}(G[n])\big). 
\]

\noindent Now observe that each of the structural maps
in this limit is an isomorphism (as suspension is an equivalence
for both $\sC$ and $\sD$).

\end{proof}

\subsection{}

We now prove Theorem \ref{t:k-deriv}.

\begin{proof}[Proof of Theorem \ref{t:k-deriv}]

We verify that the functor $(\sC,T) \mapsto \tr_{\sC}(T)$
satisfies the same universal property as in Lemma \ref{l:dk-init}.

\step 

First, note that there actually is a canonical such a functor out
of $\StCat_{cg,endo}$: this follows from the
functoriality of traces discussed in \S \ref{s:tc} (and almost
established in \cite{kp} \S 1).

Clearly this functor commutes with colimits in $T$.
It is straightforward to check additivity;
we omit the verification here.

Moreover, $\tr_{\Sp}(\id_{\Sp}) = \bS \in \Sp$, so
there is a tautological base-point here.

It remains to show universality of the trace.
So suppose that we are given $\Phi$ as in 
Lemma \ref{l:dk-init}. 

\step\label{st:vareps}

We now make some preliminary constructions.

Fix $(\sC,T) \in \StCat_{cg,endo}$ and suppose
$\sF \in \sC^c$ compact. Then there is a canonical map:

\[
\vareps_{\sF}:\Hom_{\sC}(\sF,T(\sF)) \to 
\sC^{T,c,\simeq} \to \Omega^{\infty}\Phi_{\sC}(T) \in \Gpd
\]

\noindent with the first map being obvious and the
second map coming from Lemma \ref{l:dk-init}.

Moreover, suppose that for some $n \geq 0$, we are given a diagram: 

\[
\sF_0 \xar{\alpha_0} \ldots \xar{\alpha_{n-1}} \sF_n \xar{\alpha_n} 
T(\sF_0) \xar{T(\alpha_0)} \ldots \xar{T(\alpha_{n-1})} T(\sF_n) \in \sC
\]

\noindent with each $\sF_i$ compact. For each $i$, we have 
an induced map $\sF_i \to T(\sF_i)$, and we claim that
the induced point of $\Omega^{\infty}\Phi_{\sC}(T)$ is canonically
independent of $i$. More precisely, we have a simplicial
groupoid sending $[n]$ to the groupoid of diagrams
as above, and we claim there is a natural transformation
to the constant simplicial groupoid with value 
$\Omega^{\infty}\Phi_{\sC}(T)$ that coincides with
the construction $\vareps_{\sF}$ for $n = 0$.
(The additivity of $\Phi$ is essential here.) 

First, observe
that for any $\sF \in \sC^c$, 
$\vareps_{\sF}$ is pointed (and even upgrades to a map of
spectra) by exactness of $\Phi_{\sC}(-)$. Under the above
hypotheses, we may regard $\sF_n$ as a filtered object of
$\sC$, and the given data as a filtered map
$\sF_n \to T(\sF_n)$. Clearly on associated graded,
the induced maps:

\[
\Coker(\sF_i \to \sF_{i+1}) \to \Coker(T(\sF_i) \to T(\sF_{i+1}))
\]

\noindent are zero for all $i \geq 0$. Additivity
(and the pointedness noted above) then implies that:

\[
\vareps_{\sF_n}(T(\alpha_{n-1} \ldots T(\alpha_0) \alpha_n)) =
\vareps_{\sF_0}(\alpha_n \ldots \alpha_1\alpha_0).
\]
 
This argument immediately upgrades to give the 
desired natural transformation of simplicial groupoids; we omit
the details. Moreover, we note that these constructions
are natural in $(\sC,T) \in \StCat_{cg,endo}$ in the obvious sense.

\step 

Now fix $\sC$ compactly generated and stable.
Recall that $\sC$ is dualizable in $\StCat_{cont}$, so:

\[
\sC \otimes \sC^{\vee} \isom \TwoEnd_{\StCat_{cont}}(\sC).
\]

\noindent Therefore, it is enough to give the natural
transformation of the induced functors:

\[
\sC \times \sC^{\vee} \to \Sp.
\]

\noindent The left hand side is $\Ind(\sC^c \times \sC^{c,op})$,
so it is enough to construct our natural transformation 
when restricted to $\sC^c \times \sC^{c,op}$, i.e.,
for functors of the form $\sG \boxtimes \bD \sF$ for
$\sF,\sG \in \sC^c$. 

\step 

We have $\tr_{\sC}(\sG \boxtimes \bD \sF) = \ul{\Hom}_{\sC}(\sF,\sG)$.
By Lemma \ref{l:nat-trans-sp}, it suffices to
construct natural maps:

\[
\Hom_{\sC}(\sF,\sG) \to \Phi_{\sC}(\sG \boxtimes \bD \sF) \in \Gpd.
\]

We have map of spectra:

\[
\ul{\Hom}_{\sC}(\sF,\sG) \to 
\ul{\End}_{\sC}(\sF) \otimes \ul{\Hom}(\sF,\sG) \isom 
\ul{\Hom}(\sF,\ul{\End}_{\sC}(\sF) \otimes \sG) =
\ul{\Hom}(\sF,(\sG \boxtimes \bD \sF)(\sF)).
\]

\noindent Applying $\Omega^{\infty}$ and using the
construction from Step \ref{st:vareps}, we obtain a
map:

\[
\Hom_{\sC}(\sF,\sG) \to \Omega^{\infty}\Phi_{\sC}(\sG \boxtimes \bD \sF).
\]

\noindent This map is clearly natural in the variable $\sG$,
only natural with respect to isomorphisms in the variable $\sF$.
I.e., we have constructed a natural transformation
of functors $\sC^c \times \sC^{c,op,\simeq} \to \Gpd$.

\step It remains to upgrade the above construction to a natural transformation
of functors defined on all of $\sC^c \times \sC^{c,op}$.

First, at a homotopically naive level, 
suppose we are given $f:\sF_0 \to \sF_1$ and 
$g:\sF_1 \to \sG$. We a priori obtain two points
of $\Omega^{\infty}\Phi_{\sC}(\sG \boxtimes \bD \sF_0)$:

\begin{equation}\label{eq:func}
\begin{gathered}
gf \in \Hom_{\sC}(\sF_0,\sG), \hspace{.5cm} 
\Hom_{\sC}(\sF_0,\sG) \to \Omega^{\infty}\Phi(\sG \boxtimes \bD \sF_0) \\
f \in \Hom_{\sC}(\sF_1,\sG), \hspace{.5cm} 
\Hom_{\sC}(\sF_1,\sG) \to \Omega^{\infty}\Phi(\sG \boxtimes \bD \sF_1)
\Omega^{\infty}\Phi(\sG \boxtimes \bD \sF_0).
\end{gathered}
\end{equation}

\noindent We claim that they are canonically identified. 

We prove this by identifying both points with a third:
we have a canonical map:

\[
\sF_1 = \bS \otimes \sF_1 \xar{f\otimes g} \ul{\Hom}(\sF_0,\sF_1) \otimes \sG =
(\sG \boxtimes \bD \sF_0)(\sF_1)
\] 

\noindent which (by Step \ref{st:vareps}) gives a point of
$\Omega^{\infty} \Omega^{\infty}\Phi(\sG \boxtimes \bD \sF_0)$.

It is tautological that this map coincides with
the second map in \eqref{eq:func}. To identify it with
the first, we use the diagram:

\[
\sF_0 \xar{f} \sF_1 \xar{\id_{\sF_0} \otimes g}
(\sG \boxtimes \bD \sF_0)(\sF_0) \to 
(\sG \boxtimes \bD \sF_0)(\sF_1)
\]

\noindent and Step \ref{st:vareps}.

To upgrade this map to a homotopically correct one, one
shows that we have a morphism of the complete Segal spaces 
defined by $\sC^c \times \sC^{c,op}$ and $\Gpd$ respectively;
obviously this uses the full simplicial construction
from Step \ref{st:vareps}. We leave the details to the
reader.

\end{proof}

\section{Topological cyclic homology}\label{s:tc}

\subsection{}

In this section, we prove Theorem \ref{t:tc}.
We will deduce this result from an explicit calculation of $\TC$ 
for split square-zero extensions, see Theorem \ref{t:tc-calc} below.

\begin{rem}

Throughout this section, if not otherwise mentioned,
stable categories lie in $\StCat_{cont}$ and functors
between stable categories are morphisms there (i.e., continuous
exact functors).

\end{rem}

\subsection{Mea culpa and references}

Throughout this section, we need various functoriality properties
of traces. Unfortunately, these are not so well
documented at the moment.

In \S \ref{ss:trace-review}, \S \ref{ss:trace-review-2} and 
\S \ref{ss:tate-diag},
we indicate what functoriality we require. This material (especially
first two of these sections), is well-known folklore that does not
seem to quite have a convenient reference. 

We do not feel so much guilt on this point for three 
reasons. First, some of this
functoriality is established in \cite{kp} \S 1. Moreover, the
constructions (especially
Proposition 1.2.9) from \emph{loc. cit}. can be readily 
be generalized to provide the
desired functoriality using Segal spaces.

Second, Thomas Nikolaus has forthcoming work \cite{thomas-func} 
completely establishing the functoriality we postulate here.

Finally, if we had chosen to work with algebras instead of
categories (as is all we need in practice), 
then one could make do with the methods
of \cite{nikolaus-scholze}. But we have not used this approach
here because we find it to be not as well-suited 
as the categorical approach for the problems at hand.

\subsection{Review of traces}\label{ss:trace-review}

Let $\sC$ be dualizable in $\StCat_{cont}$.
Then the trace functor:

\[
\tr_{\sC}:\TwoEnd_{\StCat_{cont}}(\sC) \to \Sp
\]

\noindent satisfies:

\[
\tr_{\sC}(TS) = \tr_{\sC}(ST)
\]

\noindent for $S,T \in \TwoEnd_{\StCat_{cont}}(\sC)$, and
more generally:

\[
\tr_{\sC}(T_1\ldots T_n) = \tr_{\sC}(T_2 \ldots T_n T_1) 
\]

\noindent for $T_1,\ldots T_n \in \TwoEnd_{\StCat_{cont}}(\sC)$.
Here we are lazily writing an equals sign for an existence
of canonical isomorphism; and more functorially, we should
work with a series of functors indexed by a cyclic set. 

In particular, there is a $\bZ/n$-action on\footnote{For clarity:
throughout this section, e.g. $T^n$ denotes the $n$-fold composition
of $T$ with itself (and not, say, the $n$-fold product of it with itself).}
$\tr_{\sC}(T^n)$ for any $T$. As a variant,
we have a cyclic functor
with constant value $\tr_{\sC}(\id_{\sC})$ and whose underlying
simplicial functor is constant;
this recovers the usual $\bB \bZ$-action on $\THH$.

\begin{rem}\label{r:bb-bz}

Here we are using somewhat non-standard notation:
we use $\bB \bZ$ rather than $S^1$ or $\bT$ to emphasize that
the story is not at all transcendental. 
Note that in this perspective,
the usual homomorphism $\bZ/n \to S^1 = \bB \bZ$ corresponds to the
extension $0 \to \bZ \xar{n} \bZ \to \bZ/n$ of 
abelian groups.

\end{rem}

\subsection{}\label{ss:trace-review-2}

Next, recall the following additional functoriality of
traces. Note that we have used some of this
material already in the proof of Theorem \ref{t:k}.

Suppose $T \in \TwoEnd(\sC)$ and $S \in \TwoEnd(\sD)$.
Suppose moreover that 
$\psi:\sC \to \sD \in \StCat_{cont}$
is an exact functor between dualizable stable categories that
admits a continuous right adjoint, and that we are given
a natural transformation:

\[
\psi T \to S \psi.
\]

\noindent Then there is an induced map:

\[
\tr_{\sC}(T) \to \tr_{\sD}(S)
\]

\noindent satisfying expected compatibilities.

Note that in such a case, for $n\in \bZ^{>0}$ we also
obtain a natural transformation:

\[
\psi T^n \to S \psi T^{n-1} \to \ldots \to
S^{n-1} \psi T \to S^n \psi
\]

\noindent and so a map:

\[
\tr_{\sC}(T^n) \to \tr_{\sD}(S^n).
\]

\noindent By construction, this map is $\bZ/n$-equivariant.

\begin{example}\label{e:trace}

Suppose $\sC = \Sp$ and $T$ is the identity
functor. Then a functor $\psi$ as above is equivalent
to a compact object $\sF \in \sD$, and a natural transformation
as above is equivalent to a map $\eta:\sF \to S(\sF)$.
From this datum, the above constructs a canonical
map:

\[
\tr_{\Sp}(\id_{\Sp}) = \bS \to \tr_{\sD}(S)
\] 

\noindent i.e., it gives a point of $\Omega^{\infty} \tr_{\sD}(S)$.
For later use, we denote this point $\tr_{\sF}(\eta)$.

We remark that we have already seen this construction in
Example \ref{e:trace}.

\end{example}

\subsection{Calculation of $\THH$}

Suppose that $\sC \in \StCat_{cont}$ is dualizable
and equipped with a continuous endofunctor $T$.

\begin{prop}\label{p:thh}

There is a canonical $\bB \bZ$-equivariant isomorphism:

\[
\THH(\sC) \bigoplus
\oplus_{n> 0} \, \on{Ind}_{\bZ/n}^{\bB \bZ} \tr_{\sC}(T^n) \isom  
\THH(\SqZero(\sC,T)).
\]

\noindent Here $\on{Ind}_{\bZ/n}^{\bB \bZ}$ is the 
right adjoint induction functor from spectra with (naive) 
$\bZ/n$-actions
to spectra with $\bB \bZ$-actions.

\end{prop}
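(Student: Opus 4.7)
The plan is a Morita-style reduction to the algebra case, where the statement follows from the classical formula for $\THH$ of a split square-zero extension. The map from left to right is constructed using the functoriality of traces from \S\ref{ss:trace-review-2}.

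First, I would construct the comparison map. The $n=0$ summand comes from the fully-faithful colimit-preserving embedding $\iota: \sC \to \SqZero(\sC,T)$, $\sF \mapsto (\sF,0)$, which preserves compact objects (Proposition \ref{p:sqzero-cpt}) and admits a continuous right adjoint, so by the functoriality of traces it induces a $\bB\bZ$-equivariant map $\THH(\sC) \to \THH(\SqZero(\sC,T))$. For the $n$-th summand, I would exploit the tautological natural transformation $\theta: \id_{\SqZero(\sC,T)} \to T_{\SqZero}$, where $T_{\SqZero}(\sF,\eta) = (T\sF, T(\eta))$ and $\theta_{(\sF,\eta)} = \eta$ (a morphism in $\SqZero(\sC,T)$ because $T(\eta)\circ\eta = T(\eta)\circ\eta$ tautologically). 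Iterating $\theta$ and combining it with $\iota$ via \S\ref{ss:trace-review-2} produces, for each $n>0$, a $\bZ/n$-equivariant map $\tr_\sC(T^n) \to \THH(\SqZero(\sC,T))$, and the cyclic functoriality of traces assembles these into a $\bB\bZ$-equivariant map from the stated coproduct.

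Second, to verify the assembled map is an equivalence, I would reduce to the algebra case. Since $\SqZero(\sC,T)$ is dualizable and compactly generated (Proposition \ref{p:sqzero-cpt}) and $\THH$ is Morita invariant, we may replace $(\sC,T)$ by $(A\mod, T_M)$ for an $\sE_1$-algebra $A$ and bimodule $M$ (e.g.\ $A$ the endomorphism algebra of a compact generator of $\sC$, with $M$ the bimodule arising from $T$ under Morita equivalence). By Proposition \ref{p:sqzero-comparison}, $\SqZero(\sC,T) \simeq (A\oplus M)\mod$, and the assertion reduces to the classical formula for $\THH(A\oplus M)$. This is proved by decomposing the cyclic bar complex $B^{\on{cyc}}(A\oplus M)_k = (A\oplus M)^{\otimes(k+1)}$ according to the cyclic placement of $M$-factors: the pieces with exactly $n$ $M$-factors assemble, after geometric realization, into the summand $\on{Ind}_{\bZ/n}^{\bB\bZ}\THH(A; M[1]^{\otimes_A n}) = \on{Ind}_{\bZ/n}^{\bB\bZ}\tr_\sC(T_M^n)$.

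The main obstacle is organizing the $\bB\bZ$-equivariance coherently at the categorical level: the individual $\bZ/n$-equivariant summand maps must fit into a single $\bB\bZ$-equivariant map on the coproduct, and this requires the refined cyclic functoriality of traces alluded to in \S\ref{ss:trace-review-2} (not yet fully documented, but provided by the forthcoming \cite{thomas-func}). The algebra-level decomposition of the cyclic bar complex is standard, so once the $\bB\bZ$-structure on both sides is set up correctly, the remaining verification is formal.
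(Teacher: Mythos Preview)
Your approach is genuinely different from the paper's. The paper never reduces to algebras or to the cyclic bar complex: it works entirely in $\StCat_{cont}$, explicitly identifying $\SqZero(\sC,T)^{\vee} \simeq \SqZero(\sC^{\vee},T^{\vee})$ by writing down evaluation and coevaluation maps, then composing them to obtain the formula \emph{as a bare spectrum}. The $\bB\bZ$-equivariance is handled by a grading trick: $\SqZero(\sC,T)$ carries a natural $\Rep(\bG_m)$-module structure, so $\THH$ is computed as a graded spectrum, and the degree-$n$ piece is then matched $\bB\bZ$-equivariantly using the forgetful functor $\Oblv:\SqZero(\sC,T)\to\sC$ together with the natural transformation $\Oblv \to T\circ\Oblv$. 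Your Morita/cyclic-bar route is more concrete and closer to the classical literature (e.g.\ Lindenstrauss--McCarthy), while the paper's argument has the virtue of being intrinsic to dualizable categories.

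There are two genuine gaps in your proposal. First, the proposition is stated for $\sC$ merely \emph{dualizable} in $\StCat_{cont}$, not compactly generated; your Morita reduction to $(A\mod,T_M)$ requires a compact generator, which is not available in general (Proposition \ref{p:sqzero-cpt} itself assumes $\sC$ compactly generated). The paper's duality-data argument avoids this hypothesis entirely.

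Second, your construction of the comparison map for $n>0$ does not go through as written. The functoriality of \S \ref{ss:trace-review-2} produces a map $\tr_{\sC}(T^n) \to \tr_{\sD}(S^n)$ from a functor $\psi:\sC\to\sD$ with continuous right adjoint and a natural transformation $\psi T \to S\psi$. With $\psi=\iota$ and $S=\id_{\SqZero(\sC,T)}$ you would need $\iota T \to \iota$, i.e.\ natural maps $(T\sF,0)\to(\sF,0)$, which do not exist; and your $\theta:\id\to T_{\SqZero}$ points the wrong way to help. The natural maps in this setup go from $\THH(\SqZero(\sC,T))$ \emph{to} $\tr_{\sC}(T^n)$, using $\Oblv$ (which has a continuous right adjoint) and the transformation $\Oblv \to T\circ\Oblv$ given by $\eta$ --- exactly what the paper does. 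So either you should build the projections rather than the inclusions, or you should extract the inclusions directly from the cyclic bar decomposition and then check $\bB\bZ$-equivariance against those; but the trace-functoriality package as you have invoked it does not furnish the maps you claim.
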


\begin{proof}

Here is the method. 

Suppose $\sD$ a dualizable stable category and we wish
to calculate its Hochschild homology. Note that
$\sD \otimes \sD^{\vee} \isom \TwoEnd(\sD)$, and the
trace map on the right hand side corresponds to the
canonical pairing on the left hand side.
Then we might try to calculate this tensor product
in some explicit terms, calculate what corresponds to the
identity functor for $\sD$, and then apply the evaluation
functor. This will be our approach for $\sD = \SqZero(\sC,T)$.

We remark that 
some of the manipulations below may also be
understood in terms of usual square-zero extensions (using 
Proposition \ref{p:sqzero-comparison}), and we encourage the reader to
do the exercise of translating.

\step 

First, we claim that for any $\sD \in \StCat_{cont}$,
the functor:

\begin{equation}\label{eq:sqzero-tens}
\SqZero(\sC,T) \otimes \sD \isom 
\SqZero(\sC \otimes \sD, T \otimes \id_{\sD})
\end{equation}

\noindent is an equivalence.
We will show both sides map to $\sC\otimes \sD$ monadically,
and the induced morphism of monads is an isomorphism.

Note that the functor 
$\SqZero(\sC,T) \xar{(\sF,\eta) \mapsto \Ker(\eta)} \sC$
admits a left adjoint equipping an object
of $\sC$ with the zero map to $T$ of itself. 
Moreover, the local nilpotence condition in the definition
of $\SqZero(\sC,T)$ implies that this functor is
conservative, and therefore (being continuous)
monadic. Note that the underlying monad on 
$\sC$ sends $\sF \in \sC$ to 
$\sF \oplus T(\sF)[-1] = \Ker(0:\sF \to T(\sF))$.

Then we recall that monadicity is preserved under
tensor products in $\StCat_{cont}$, so
the left hand side of \eqref{eq:sqzero-tens}
maps monadically to $\sC \otimes \sD$.
Moreover, applying the above to 
$\SqZero(\sC \otimes \sD, T \otimes \id_{\sD})$,
we obtain that this category maps monadically
to $\sC\otimes \sD$. Then it is immediate
to verify that the functor in \eqref{eq:sqzero-tens} 
intertwines these monadic functors and induces an
equivalence of monads on $\sC \otimes \sD$, and therefore
is an equivalence.

\step 

Next, we claim that 
$\SqZero(\sC,T)$ is dualizable
with dual $\SqZero(\sC^{\vee},T^{\vee})$. 
Here we recall that a functor $T: \sC \to \sC$
induces a dual functor $T^{\vee}: \sC^{\vee} \to \sC^{\vee}$;
explicitly, for 
$\lambda \in \sC^{\vee} = \TwoHom(\sC,\Sp)$, 
$T^{\vee}(\lambda) = \lambda \circ T$.

First, we construct the evaluation map:

\[
\SqZero(\sC,T) \otimes \SqZero(\sC^{\vee},T^{\vee}) \to \Sp.
\] 

\noindent It is equivalent to construct a functor:

\[
\SqZero(\sC,T) \times \SqZero(\sC^{\vee},T^{\vee}) \to \Sp
\]

\noindent commuting with colimits in each variable separately.
This pairing sends:

\[
\big((\sF,\eta:\sF \to T(\sF)),(\lambda,\mu:\lambda \to \lambda T)
\big)
\]

\noindent to:

\[
\on{Eq}\big(\lambda(\sF) 
\overset{\lambda(\eta)}{\underset{\mu}{\rightrightarrows}}
\lambda T(\sF) \big).
\]

Next, we define the coevaluation map:

\[
\Sp \to \SqZero(\sC,T) \otimes \SqZero(\sC^{\vee},T^{\vee}).
\]

\noindent For this, it is helpful to realize the
right hand side more explicitly using
the previous step.
Iteratively applying the previous step, we obtain:

\[
\begin{gathered}
\SqZero(\sC,T) \otimes \SqZero(\sC^{\vee},T^{\vee}) =
\SqZero(\sC \otimes \SqZero(\sC^{\vee},T^{\vee}), T \otimes \id) =
\\
\SqZero\big(\sC \otimes \sC^{\vee}, 
(T \otimes \id_{\sC}^{\vee}) \times 
(\id_{\sC} \otimes T^{\vee})\big).
\end{gathered}
\]

Noting that $\sC \otimes \sC^{\vee} \isom \TwoEnd(\sC)$
by duality, we obtain
that objects of the above tensor product
are the same as data:\footnote{
A posteriori, we have 
$\SqZero(\sC,T) \otimes \SqZero(\sC^{\vee},T^{\vee})  
\isom \TwoEnd(\SqZero(\sC,T))$.

Explicitly, the endofunctor of
$\SqZero(\sC,T)$ corresponding to the above
data sends $(\sF,\eta)$ to:

\[
\sF^{\prime} \coloneqq
\on{Eq}\big(
S(\sF) \overset{S(\eta)}{\underset{\alpha}{\rightrightarrows}} ST(\sF) 
\big) 
\]

\noindent with 
$\eta^{\prime}: \sF^{\prime} \to T(\sF^{\prime})$
induced by taking equalizers along rows in the (appropriately
commuting) diagram:

\[
\xymatrix{
S(\sF) \ar@<.4ex>[r]^{S(\eta)} \ar@<-.4ex>[r]_{\alpha} \ar[d]^{\beta}
& ST(\sF) \ar[d]^{\beta} \\
TS(\sF) \ar@<.4ex>[r]^{TS(\eta)} \ar@<-.4ex>[r]_{T(\alpha)} & TST(\sF).
}
\]
}

\begin{equation}\label{eq:s}
\vcenter{\xymatrix{
S \ar[r]^{\alpha} \ar[d]_{\beta} & ST \\
TS
}}
\end{equation} 

\noindent with:

\[
\begin{gathered}
\colim \, \big(S \xar{\alpha} ST \xar{ (-\circ T)(\alpha)} ST^2 \ldots\big) 
= 0\\
\colim \, \big( S \xar{\beta} TS \xar{ (T \circ -)(\beta)} T^2 S \ldots \big)
= 0.
\end{gathered}
\]

Now our coevaluation map is specified
by an object of the above tensor product (since it is
a continuous exact functor out of spectra).
In the graphical display of \eqref{eq:s}, this object is:

\[
\xymatrix{
\oplus_{n \geq 0} T^n \ar[r]^{\pi} \ar[d]_{\pi} & 
\oplus_{n \geq 1} T^n \\
\oplus_{n \geq 1} T^n
}
\]

\noindent where $\pi$ denotes the natural projection.

To verify that this actually defines a duality datum,
we should show that the composition:

\[
\SqZero(\sC,T) \xar{\id \otimes \on{coev}} 
\SqZero(\sC,T) \otimes \SqZero(\sC^{\vee},T^{\vee}) 
\otimes \SqZero(\sC,T) \xar{\on{ev} \otimes \id}
\SqZero(\sC,T)
\]

\noindent is isomorphic to the identity functor
(by the symmetry of $\sC$ and $\sC^{\vee}$ here,
this suffices).

This composition sends $(\sF,\eta) \in \SqZero(\sC,T)$ to:

\[
\sF^{\prime} = \on{Eq}
(\oplus_{n \geq 0} T^n(\sF) 
\overset{\oplus_n T^n(\eta)}{\underset{\pi}{\rightrightarrows}}
\oplus_{n \geq 1} T^n(\sF))
\]

\noindent equipped with the map 
$\eta^{\prime}:\sF^{\prime} \to T(\sF^{\prime})$
induced by the natural projection from $\sF^{\prime}$ to: 

\[
T(\sF^{\prime}) = 
\on{Eq}
(\oplus_{n \geq 1} T^n(\sF) \rightrightarrows \oplus_{n \geq 2} T^n(\sF)).
\]

\noindent We wish to construct a functorial isomorphism
$(\sF,\eta) \simeq (\sF^{\prime},\eta^{\prime})$.

First, we observe:\footnote{Note the difference
from the equalizer we aim to calculate: it is 
in indexing the second term in the equalizer.}

\[
\begin{gathered}
\on{Eq}
(\oplus_{n \geq 0} T^n(\sF) \underset{\id}{\rightrightarrows} \oplus_{n \geq 0} T^n(\sF)) =
\on{Coeq}
(\oplus_{n \geq 0} T^n(\sF) \rightrightarrows \oplus_{n \geq 0} T^n(\sF))[-1] = \\
\colim_{n \geq 0} T^n(\sF)[-1] = 0
\end{gathered}
\]

\noindent by local nilpotence of $\eta$. 
Therefore, the equalizer we are trying
to calculate is:

\[
\Coker\big(\on{Eq}(0 \rightrightarrows \sF) \to 
0\big) = \Coker(\sF[-1] \to 0) = \sF.
\]

\noindent The additional compatibility
between $\eta$ and $\eta^{\prime}$ is readily
seen.

\step

We now obtain a formula for Hochschild homology
as a bare spectrum by 
composing the evaulation and coevaluation maps.

First, note that the evaluation map
sends an object \eqref{eq:s} to:

\[
\on{Eq}(\tr_{\sC}(S) 
\overset{\alpha}{\underset{\beta}{\rightrightarrows}}
\tr_{\sC}(ST) \simeq \tr_{\sC}(TS)).
\]

\noindent Indeed, this follows by identifying the
two on ``pure tensors"
$(\sF,\eta)\boxtimes (\lambda,\mu)$ as in the
construction of the evaluation map.

We thus obtain:

\begin{equation}\label{eq:thh-fmla}
\THH(\SqZero(\sC,T)) \isom 
\underset{n \geq 0}{\oplus} 
\on{Eq}\big(\tr_{\sC}(T^n) 
\overset{\id}{\underset{\sigma_n}{\rightrightarrows}}
\tr_{\sC}(T^n)\big)
\end{equation}

\noindent where 
$\sigma_n:\on{\tr}_{\sC}(T^n) \isom \on{\tr}_{\sC}(T^n)$
is the action of the generator of $\bZ/n$ on
this trace (c.f. \S \ref{ss:trace-review}).

We observe that the summand:

\[
\on{Eq}\big(\tr_{\sC}(T^n) 
\overset{\id}{\underset{\sigma_n}{\rightrightarrows}}
\tr_{\sC}(T^n)\big)
\] 

\noindent is isomorphic as a spectrum
to $\on{Ind}_{\bZ/n}^{\bB \bZ} \tr_{\sC}(T^n)$.

Therefore, the left and right hand sides of \eqref{eq:thh-fmla}
have natural $\bB\bZ$-actions. It remains to show that
isomorphism of \eqref{eq:thh-fmla} upgrades
to a $\bB \bZ$-equivariant one.

\step 

First, as a (slightly\footnote{If the right
hand side were a product instead of a sum,
what we explain here would be adequate. And in
fact, for our application we may assume 
$\tr_{\sC}(T^n) \in \Sp^{\leq -n}$, which forces
the direct sum and direct product to coincide.})
toy version of the problem, fix $n>0$. We claim that 
the composition:

\[
\THH(\SqZero(\sC,T)) \xar{\eqref{eq:thh-fmla}} 
\on{Eq}\big(\tr_{\sC}(T^n) 
\overset{\id}{\underset{\sigma_n}{\rightrightarrows}}
\tr_{\sC}(T^n)\big) \to 
\tr_{\sC}(T^n)
\]

\noindent is $\bZ/n$-equivariant,
and that the induced map:

\[
\THH(\SqZero(\sC,T)) \to \on{Ind}_{\bZ/n}^{\bB \bZ} \tr_{\sC}(T^n) =
\on{Eq}\big(\tr_{\sC}(T^n) 
\overset{\id}{\underset{\sigma_n}{\rightrightarrows}}
\tr_{\sC}(T^n)\big)
\]

\noindent is the natural projection arising from 
\eqref{eq:thh-fmla}.

Note that we have a natural functor:

\[
\begin{gathered}
\Oblv:\SqZero(\sC,T) \to \sC \\
(\sF,\eta) \mapsto \sF
\end{gathered}
\]

\noindent that admits a continuous right 
adjoint.\footnote{Explicitly,
this right adjoint sends $\sG \in \sC$ to 
$\oplus_{n \geq 0} T^n(\sG)$ equipped with 
the projection map $\oplus_{n \geq 0} T^n(\sG) \to 
T(\oplus_{n \geq 0} T^n(\sG)) = \oplus_{n \geq 1} T^n(\sG)$.}
Moreover, there is a canonical natural transformation:

\begin{equation}\label{eq:eta-nat-trans}
\Oblv \to T \circ \Oblv 
\end{equation}

\noindent that evaluates on $(\sF,\eta)$ as the map $\eta$.

By \S \ref{ss:trace-review-2}, we obtain a natural 
$\bZ/n$-equivariant map:

\[
\THH(\SqZero(\sC,T)) = 
\tr_{\SqZero(\sC,T)}(\id_{\SqZero(\sC,T)}) \to 
\tr_{\sC}(T^n).
\]

\noindent It is straightforward to see that this map has the desired
compatibilities with \eqref{eq:thh-fmla}.

\step 

Finally, we explain how to complete the argument. We do this using
a general format for $\THH$ (and traces more generally) to have
gradings.\footnote{The present discussion admits some natural extensions,
which we highlight here.

First, there is a notion of \emph{graded cyclotomic spectrum}, which
is discussed in \S \ref{ss:cyc-filt}. (The key point is
that the Frobenius at $p$ multiplies degrees by $p$.)
In particular, up to adapting \cite{amr} to the graded setting,
the present discussion shows that $\THH$ of a graded object
of $\StCat_{cont}$ is a graded cyclotomic spectrum.

Moreover, similar ideas may developed in the filtered setting.}

Let $\Rep(\bG_m)$ denote the symmetric monoidal category of $\bZ$-graded spectra
with the convolution\footnote{I.e., if $\sF$ and $\sG$ are spectra,
then $\sF(n) \otimes \sG(m) = (\sF\otimes \sG)(n+m)$, where
e.g. $\sF(n)$ indicates we consider $\sF$ as graded purely in degree $n$,
on the left hand side $\otimes$ indicates our convolution monoidal structure,
and on the right hand side it indicates the usual tensor product.}
 monoidal structure.\footnote{This category
is readily seen to in fact be comodules over 
the (bi-$\sE_{\infty}$) Hopf algebra $\Sigma^{\infty}\bZ$.}
A \emph{grading} on $\sD \in \StCat_{cont}$ is the datum of 
a category $\sD^{gr} \in \StCat_{cont}$ (of ``graded objects in $\sD$") equipped with 
$\Rep(\bG_m)$-module category structure and an isomorphism:

\[
\sD^{gr} \underset{\Rep(\bG_m)}{\otimes} \Sp \isom \sD.
\]

\noindent Here we are using the symmetric monoidal functor
$\Rep(\bG_m) \to \Sp$ of forgetting the 
grading.\footnote{Here is another language for categorical gradings, which
the reader may safely skip. We use some terminology and notation that we do not
wish to explain here.
 
Note that \cite{shvcat} Theorem 2.2.2
is true for spectra in the special case $G = \bG_m$: the proof from
\emph{loc. cit}. \S 7.2 works in this setup. (This is closely related to
$\bG_m$ being linearly reductive in any characteristic.)

The upshot is that a 
grading on $\sD$ is equivalent to a weak $\bG_m$-action on $\sD$,
where one recovers $\sD^{gr}$ as $\sD^{\bG_m,w}$. 

For example, in slightly imprecise terms, 
the relevant weak $\bG_m$-action on $\SqZero(\sC,T)$ that
we use below scales the map $\eta$.}

We claim that if $\sD$ is dualizable and equipped with 
a grading, then $\THH(\sD)$ is naturally graded
spectrum. 

Indeed, one can show\footnote{E.g., using the previous footnote and
standard techniques.}
that $\sD^{gr}$ is automatically dualizable as a $\Rep(\bG_m)$-module category.
Therefore, we can take its $\THH$ (the trace of the identity) 
in this category to obtain an object:

\[
\THH_{/\Rep(\bG_m)}(\sD^{gr}) \in \Rep(\bG_m).
\]

\noindent Then by functoriality, this object maps to 
$\THH(\sD)$ under the forgetful functor $\Rep(\bG_m) \to \Sp$, i.e., 
it induces a grading on $\THH(\sD)$.
Moreover, this construction makes manifest functoriality of traces
in the graded setting, similar to \S \ref{ss:trace-review}; we do
not spell out the details here.

We apply this to $\sD = \SqZero(\sC,T)$. 
We set $\SqZero(\sC,T)^{gr}$ to be the category
whose objects are collections $\sF_n \in \sC$ for each $n \in \bZ$
and equipped with maps $\eta_n:\sF_n \to T(\sF_{n+1})$ that are locally
nilpotent in the sense that $\colim_n T^n(\sF_{n+m}) = 0$ for any $m$.

This category has an obvious action of $\Rep(\bG_m)$, and the functor:

\[
\begin{gathered}
\SqZero(\sC,T)^{gr} \to \SqZero(\sC) \\
\big((\sF_n)_{n \in \bZ},(\eta_n)_{n\in \bZ}\big) \mapsto 
(\oplus_n \sF_n,\oplus_n \eta_n)
\end{gathered}
\]

\noindent naturally upgrades to an equivalence:

\[
\SqZero(\sC,T)^{gr} \underset{\Rep(\bG_m)}{\otimes} \Sp \isom  
\SqZero(\sC,T).
\]

Then it is straightforward to verify that the grading on 
$\THH(\SqZero(\sC,T))$ coming from the above coincides with
the grading appearing in \eqref{eq:thh-fmla}. 

Now note that
the natural transformation \eqref{eq:eta-nat-trans} is
\emph{graded of degree 1} in the natural sense. 
Therefore, we can apply the method from the previous step
to see that the degree $n$ part of $\THH(\SqZero(\sC,T))$
is $\bB \bZ$-equivariantly isomorphic to 
$\on{Ind}_{\bZ/n}^{\bB \bZ} \tr_{\sC}(T^n)$, completing
the argument. 

\end{proof}

\subsection{Cyclotomic structure}

In Proposition \ref{p:thh}, for $\sC$ dualizable and
$T:\sC \to \sC$ continuous and exact, we calculated 
$\THH(\SqZero(\sC,T))$ with its $\bB \bZ$-action.
We now wish to describe its cyclotomic structure.

\subsection{}\label{ss:tate-diag}

First, we need some additional functoriality for traces.
Fix $p$ a prime. Then we claim that there is a \emph{Tate diagonal} map:

\[
\Delta_p: \tr_{\sC}(T) \to \tr_{\sC}(T^p)^{t\bZ/p}
\]

\noindent functorial in $T$ (actually, satisfying functoriality
as in \S \ref{ss:trace-review-2}, but we do not need this).

First, note that 
$\TwoEnd(\sC) \xar{T \mapsto \tr_{\sC}(T^p)^{t\bZ/p}} 
\Sp$ 
is exact. The argument\footnote{We
remark that this argument is a variant
of the standard combinatorial proof of Fermat's little theorem;
see Wikipedia for example.} is standard, c.f. \cite{nikolaus-scholze} Proposition III.1.1. 

By Lemma \ref{l:nat-trans-sp}, it suffices to construct
the natural transformation of functors to $\Gpd$
obtained by applying $\Omega^{\infty}$.
Moreover, as $T \mapsto \tr_{\sC}(T)$ commutes with all colimits,
it suffices to define the restriction our natural transformation
when restricted along:

\[
\sC \times \sC^{\vee} \to \sC \otimes \sC^{\vee} = \TwoEnd(\sC).
\]

Now for $(\sF,\lambda) \in \sC \times \sC^{\vee}$,
the trace of the corresponding functor is
$\lambda(\sF)$, while the trace of its $p$-fold
composition is
$\lambda(\sF)^{\otimes p}$. Then we use the natural map:

\[
\begin{gathered}
\Omega^{\infty}\lambda(\sF) \isom  
\Big(\big(\Omega^{\infty}\lambda(\sF)\big)^p\Big)^{h\bZ/p} \to
\big(\Omega^{\infty}\lambda(\sF)^{\otimes p}\big)^{h\bZ/p} = \\
\Omega^{\infty}\lambda(\sF)^{\otimes p,h\bZ/p} \to 
\Omega^{\infty}\lambda(\sF)^{\otimes p,t\bZ/p}. 
\end{gathered}
\]

\begin{example}

For $\sC = \Sp$, a functor $T$ of this type is necessarily the
tensor product with some spectrum. In this case, the
Tate diagonal construction above recovers that
of \cite{nikolaus-scholze} \S III.1.

\end{example}

\begin{variant}\label{v:diag-equivariant}

Generalizing \cite{nikolaus-scholze} \S III.3,
there is some additional functoriality.
For example, for $n>0$, the Tate diagonal map:

\[
\Delta_p:\tr_{\sC}(T^n) \to \tr_{\sC}(T^{np})^{t\bZ/p}
\]

\noindent is naturally $\bZ/n$-equivariant,
where we use
the natural $(\bZ/np)/(\bZ/p) = \bZ/n$-action on the right hand
side.

\end{variant}

\subsection{}\label{ss:cyc-str}

We now construct a cyclotomic structure on the
$\bB\bZ$-spectrum:

\[
\underset{n \geq 1}{\oplus} \, \on{Ind}_{\bZ/n}^{\bB \bZ} \tr_{\sC}(T^n).
\]

\noindent We remark that a version of this construction appears
in \cite{lindenstrauss-mccarthy} in a related context
(but using the formalism of equivariant homotopy theory).

For simplicity, we assume $\tr_{\sC}(T^n) \in \Sp^{\leq 0}$
for all $n$ so we are in the setting of
\cite{nikolaus-scholze} (the general non-connective setting can be
treated following \cite{amr}). So for every prime $p$,
we need to construct a suitable ``Frobenius" map. 

For any integer $n$, we have the $\bZ/n$-equivariant Tate diagonal:

\[
\tr_{\sC}(T^n) \to \tr_{\sC}(T^{np})^{t\bZ/p}. 
\]

\noindent We then induce to $\bB\bZ$-representations:

\[
\on{Ind}_{\bZ/n}^{\bB \bZ}\tr_{\sC}(T^n) \to 
\on{Ind}_{\bZ/n}^{\bB \bZ}\big(\tr_{\sC}(T^{np})^{t\bZ/p}\big)
\]

\noindent and observe\footnote{To see this, suppose in 
generality that we are given:

\[
\xymatrix{
K \ar@{=}[d] \ar[r] & H_1 \ar[d] \ar[r] & G_1 \ar[d] \\
K \ar[r] & H_2 \ar[r] & G_2
}
\]

\noindent where the rows are fiber sequences of groups, 
the maps
$H_i \to G_i$ are epimorphisms (i.e., surjective on $\pi_0$),
and $K$ is 
a finite (discrete) group.

Then for $V$ a spectrum with a (naive) $H_1$-action, there
is an obvious commuting diagram of spectra with $G_2$-actions:

\[
\xymatrix{
\Ind_{H_1}^{H_2}(V)_{hK} \ar[r] \ar[d] & \Ind_{G_1}^{G_2}(V_{hK}) \ar[d] \\
\Ind_{H_1}^{H_2}(V)^{hK} & \Ind_{H_1}^{H_2}(V^{hK}) \ar[l]
}
\]

\noindent where the vertical maps come from norm maps for $K$.

We take $K = \bZ/p \to H_1 = \bZ/np \to G_1 = \bZ/p$ and
$K = \bZ/p \to H_2 = \bB \bZ \xar{p} G_2 = \bB \bZ$ as our rows,
with the natural maps relating them. Then the 
morphism in the top row of our diagram above is an isomorphism,
since the functor $\Ind_{\bZ/m}^{\bB \bZ}$ commutes with colimits
for any $m$. Passing to kernels along the vertical arrows then
gives the desired map.}
that there is a natural map: 

\[
\on{Ind}_{\bZ/n}^{\bB \bZ}\big(\tr_{\sC}(T^{np})^{t\bZ/p}\big) \to
\big(\on{Ind}_{\bZ/np}^{\bB \bZ}\tr_{\sC}(T^{np})\big)^{t\bZ/p}
\]

\noindent that is equivariant for the multiplication 
by $p$-map $\bB\bZ \to \bB \bZ$.

Composing the above morphisms and taking 
the direct sum over $n$, we obtain:

\[
\vph_p:
\underset{n \geq 1}{\oplus} \on{Ind}_{\bZ/n}^{\bB \bZ}\tr_{\sC}(T^n) \to 
\underset{n \geq 1}{\oplus} 
\big(\on{Ind}_{\bZ/np}^{\bB \bZ}\tr_{\sC}(T^{np})\big)^{t\bZ/p} 
\]

\noindent which is again equivariant against $p:\bB \bZ \to \bB \bZ$.
By \cite{nikolaus-scholze}, 
these maps over all $p$ define a cyclotomic structure assuming connectivity.
(And again, refining this construction somewhat gives a cyclotomic
structure in general, following \cite{amr}.)

Tracing the constructions, we have:

\begin{lem}\label{l:cyclotomic}

This cyclotomic structure is
the canonical one on:

\[
\THH_{red}(\SqZero(\sC,T)) \coloneqq 
\Ker\big(\THH(\SqZero(\sC,T)) \to \THH(\sC)\big)
\]

\noindent under the isomorphism of Proposition \ref{p:thh}.

\end{lem}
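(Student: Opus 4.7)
The plan is to verify, using naturality of the Tate diagonal, that the Frobenius $\vph_p$ constructed in \S\ref{ss:cyc-str} agrees with the canonical cyclotomic Frobenius at $p$ on $\THH_{red}(\SqZero(\sC,T))$. The summand $\THH(\sC)$ is handled automatically by functoriality with respect to the splitting $\sC \to \SqZero(\sC,T)$, $\sF \mapsto (\sF,0)$, so the content is to identify the Frobenius on each summand $\on{Ind}_{\bZ/n}^{\bB\bZ}\tr_{\sC}(T^n)$ for $n \geq 1$.

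First, I would recall that by \cite{nikolaus-scholze} (or \cite{amr} in the non-connective setting), the cyclotomic Frobenius at $p$ on $\THH(\SqZero(\sC,T)) = \tr_{\SqZero(\sC,T)}(\id)$ is, by definition, the Tate diagonal $\Delta_p$ of \S\ref{ss:tate-diag}. Next, I would combine this with the $\bZ/n$-equivariant projection $\tr_{\SqZero(\sC,T)}(\id) \to \tr_{\sC}(T^n)$ built in the penultimate step of the proof of Proposition \ref{p:thh}, which is induced via \S\ref{ss:trace-review-2} from the forgetful functor $\Oblv$ and the natural transformation \eqref{eq:eta-nat-trans}. Applying Variant \ref{v:diag-equivariant} (naturality of the Tate diagonal for the lax functoriality of \S\ref{ss:trace-review-2}) yields a commuting $\bZ/n$-equivariant square placing the canonical Frobenius of $\THH(\SqZero(\sC,T))$ over the map $\Delta_p: \tr_{\sC}(T^n) \to \tr_{\sC}(T^{np})^{t\bZ/p}$ used in the definition of $\vph_p$. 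Composing with the comparison morphism $\on{Ind}_{\bZ/n}^{\bB\bZ}(-)^{t\bZ/p} \to (\on{Ind}_{\bZ/np}^{\bB\bZ}-)^{t\bZ/p}$ spelled out in the footnote of \S\ref{ss:cyc-str} then recovers exactly $\vph_p$ on the $n$-th summand.

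To assemble these summand-by-summand comparisons into a single isomorphism of cyclotomic spectra, and to see that all $n$ and $p$ are treated coherently, I would invoke the graded refinement sketched at the end of the proof of Proposition \ref{p:thh}: the $\Rep(\bG_m)$-module structure on $\SqZero(\sC,T)^{gr}$ gives $\THH(\SqZero(\sC,T))$ a canonical grading under which the decomposition of Proposition \ref{p:thh} is the decomposition into homogeneous pieces. Moreover, the natural transformation $\Oblv \to T \circ \Oblv$ is homogeneous of degree $1$, and the Tate diagonal multiplies degrees by $p$; this matches exactly the degree shift $n \mapsto np$ built into the definition of $\vph_p$.

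The main obstacle is the coherence of Variant \ref{v:diag-equivariant} together with the compatibility of the Tate construction with $\on{Ind}$-functors across all $n$ and $p$ simultaneously. No single piece is deep, but arranging all the required $\bB\bZ$-equivariance and the cyclic structure on traces (as in \S\ref{ss:trace-review}) is precisely the functoriality for which the paper has deferred to the forthcoming \cite{thomas-func}; granting that functoriality, the verification reduces to a diagram chase of the sort already performed in the final step of the proof of Proposition \ref{p:thh}.
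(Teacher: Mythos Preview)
Your proposal is correct and matches the paper's approach: the paper offers no detailed proof, simply asserting the lemma follows by ``tracing the constructions,'' and your sketch is precisely the kind of tracing one would carry out---using naturality of the Tate diagonal (Variant \ref{v:diag-equivariant}) with respect to the map $\tr_{\SqZero(\sC,T)}(\id) \to \tr_{\sC}(T^n)$ from the proof of Proposition \ref{p:thh}, together with the graded refinement to handle coherence across all $n$ and $p$. Your acknowledgment that the full coherence ultimately rests on the functoriality deferred to \cite{thomas-func} is also consistent with the paper's own caveats in \S\ref{ss:trace-review}.
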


\subsection{Genuine fixed points}\label{ss:genuine}

It is convenient to introduce the following notation.

For $p$ a prime, let $\tr_{\sC}(T^p)^{\bZ/p} \in \Sp$ denote the
\emph{genuine fixed points}, which by definition is
the fiber product:

\[
\xymatrix{
\tr_{\sC}(T^p)^{\bZ/p} \ar[r] \ar[d] & \tr_{\sC}(T^p)^{h\bZ/p} \ar[d] \\
\tr_{\sC}(T) \ar[r]^{\Delta_p} & \tr_{\sC}(T^p)^{t\bZ/p}.
}
\]

More generally, for $n \geq 1$, we construct the
genuine fixed points $\tr_{\sC}(T^n)^{\bZ/n}$
as the iterated fiber product:

\[
\begin{gathered} 
\tr_{\sC}(T^n)^{\bZ/n} \coloneqq \\
\tr_{\sC}(T) 
\underset{
\underset{p \text{ prime}}{\underset{p\mid n}{\prod}}
\tr_{\sC}(T^p)^{t\bZ/p}
}{\bigtimes} 
\underset{p \text{ prime}}{\underset{p\mid n}{\prod}}
\tr_{\sC}(T^p)^{h\bZ/p} 
\ldots \underset{
\underset{p \text{ prime}}
{\underset{\varpi(d) = k}{{\underset{pd\mid n}{\prod}}}}
\tr_{\sC}(T^{pd})^{t\bZ/p,h\bZ/d}
}
{\bigtimes}
\underset{\varpi(d) = k+1}{\underset{d\mid n}{\prod}}
\tr_{\sC}(T^d)^{h\bZ/d} \ldots \\
\ldots 
\underset{
\underset{p \text{ prime}}
{\underset{\varpi(d) = \varpi(n)-1}{{\underset{pd\mid n}{\prod}}}}
\tr_{\sC}(T^{pd})^{t\bZ/p,h\bZ/d}
}
{\bigtimes}
\tr_{\sC}(T^n)^{h\bZ/n}.
\end{gathered}
\]

\noindent Here for $d \in \bZ^{>0}$, we let 
$\varpi(d) = \sum_{p \text{ prime}} v_p(d)$. The structure
maps in the above iterated fiber product are constructed
as follows. Going right we use the Tate diagonal maps:

\[
\tr_{\sC}(T^d)^{h\bZ/d} \to \tr_{\sC}(T^{pd})^{t\bZ/p,h\bZ/d} 
\]

\noindent coming from Variant \ref{v:diag-equivariant}.
And going left we simply use the canonical projection from invariants
to the Tate construction.

Note that for any $d\mid n$, there is a canonical restriction map:

\[
\tr_{\sC}(T^n)^{\bZ/n} \to \tr_{\sC}(T^d)^{\bZ/d}. 
\]

\begin{lem}\label{l:norm-cofib}

For any $n$ there is a canonical isomorphism:

\[
\xymatrix{
\tr_{\sC}(T^n)_{h\bZ/n} \isom 
\Ker\big(\tr_{\sC}(T^n)^{\bZ/n} \to 
\underset{d\mid n}{\lim} \, \tr_{\sC}(T^d)^{\bZ/d}\big).
}
\]

\end{lem}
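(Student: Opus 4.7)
The plan is to induct on $\varpi(n) = \sum_p v_p(n)$, reading the indexing in $\lim_{d\mid n}$ as ranging over \emph{proper} divisors of $n$ (the only reading compatible with a nonzero kernel). The base case $n = 1$ is immediate, as $\tr_\sC(T)_{h\bZ/1} = \tr_\sC(T) = \tr_\sC(T)^{\bZ/1}$ and the limit over an empty indexing category is terminal.

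For the inductive step, my first move is to peel off the last stage of the iterated pullback defining $\tr_\sC(T^n)^{\bZ/n}$. Since the only divisor $d$ of $n$ with $\varpi(d) = \varpi(n)$ is $d = n$ itself, the final stage attaches a single pullback:
\[
\tr_\sC(T^n)^{\bZ/n} \simeq P(n) \underset{A(n)}{\times} \tr_\sC(T^n)^{h\bZ/n},
\]
where $A(n) = \prod_{p \text{ prime},\, p \mid n} \tr_\sC(T^n)^{t\bZ/p,\, h\bZ/(n/p)}$ and $P(n)$ is the iterated pullback comprising all stages with $\varpi$-index at most $\varpi(n) - 1$. I would then check combinatorially that $P(n) \simeq \lim_{d\mid n,\, d<n} \tr_\sC(T^d)^{\bZ/d}$: the Tate and homotopy-fixed-point pieces appearing in $P(n)$ are indexed precisely by divisors $pd'$ of $n$ with $pd' < n$, and this is exactly the coherence data assembling the $\tr_\sC(T^d)^{\bZ/d}$ for proper $d$ into a limit diagram. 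Small cases $n = p$, $n = p^2$, and $n = pq$ (where for instance $P(pq) = F(p)\times_{F(1)} F(q)$) confirm the pattern, and the general verification reduces to tracking which Tate pieces occur at each stage.

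Given step two, the fiber of the restriction map $\tr_\sC(T^n)^{\bZ/n} \to \lim_{d\mid n,\, d<n} \tr_\sC(T^d)^{\bZ/d}$ becomes the fiber of the final-stage map $\tr_\sC(T^n)^{h\bZ/n} \to A(n)$. The substantive step is to identify this fiber with $\tr_\sC(T^n)_{h\bZ/n}$. My plan is to invoke the Tate orbit lemma of Nikolaus--Scholze (Lemma I.2.1 of \cite{nikolaus-scholze}) iteratively, using the connectivity assumption from \S \ref{ss:cyc-str}. In the $p$-primary case $n = p^r$, $A(n)$ has a single factor and the fiber of $(X^{h\bZ/p})^{h\bZ/p^{r-1}} \to (X^{t\bZ/p})^{h\bZ/p^{r-1}}$ equals $(X_{h\bZ/p})^{h\bZ/p^{r-1}}$; the Tate orbit lemma applied to the connective $\bZ/p^{r-1}$-spectrum $X_{h\bZ/p}$ then yields $(X_{h\bZ/p})^{h\bZ/p^{r-1}} \simeq (X_{h\bZ/p})_{h\bZ/p^{r-1}} = X_{h\bZ/p^r}$. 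For mixed $n$, one iterates prime by prime, using the various Tate pieces in $A(n)$ to successively collapse mixed fixed-point--orbit constructions to orbits.

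The main obstacle is this last step when $n$ has multiple distinct prime factors. Then $A(n)$ is a nontrivial product and the fiber of $\tr_\sC(T^n)^{h\bZ/n} \to A(n)$ must be analyzed as an iterated pullback; one must carefully verify that the simultaneous vanishing provided by the Tate orbit lemma at each prime collapses all the mixed $(-)^{t\bZ/p,\, h\bZ/(n/p)}$ terms in a coherent way, so that the norm map $\tr_\sC(T^n)_{h\bZ/n} \to \tr_\sC(T^n)^{h\bZ/n}$ identifies the fiber with $\tr_\sC(T^n)_{h\bZ/n}$. The cleanest organization is likely an induction along the prime factorization of $n$ that applies the Tate orbit lemma at each prime in turn, treating one Tate factor at a time and using connectivity to ensure the hypotheses remain satisfied at every stage.
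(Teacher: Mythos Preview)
Your approach is essentially the paper's: both peel off the final stage of the iterated pullback defining $\tr_\sC(T^n)^{\bZ/n}$ and reduce to computing $\Ker\big(\tr_\sC(T^n)^{h\bZ/n} \to \prod_{p\mid n} \tr_\sC(T^n)^{t\bZ/p,h\bZ/(n/p)}\big)$. The paper's endgame is cleaner than your prime-by-prime iteration, however. Rather than inducting along the prime factorization, the paper observes that for any connective $V$ with a $\bZ/n$-action the natural map
\[
V^{t\bZ/n} \to \prod_{p\mid n} V^{t\bZ/p,\,h\bZ/(n/p)}
\]
is an isomorphism: one first decomposes $V^{t\bZ/n} \simeq \prod_p V^{t\bZ/p^{v_p(n)},\,h\bZ/(n/p^{v_p(n)})}$ via $p$-adic completion, and then applies the Tate orbit lemma (\cite{nikolaus-scholze} Lemma II.4.1) to each factor. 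The kernel is then immediately $\Ker(V^{h\bZ/n} \to V^{t\bZ/n}) = V_{h\bZ/n}$ via the norm fiber sequence, bypassing the mixed-prime coherence issues you flag as the main obstacle. (Your outer induction on $\varpi(n)$ is also not really needed: the identification of $\lim_{d\mid n,\,d<n}\tr_\sC(T^d)^{\bZ/d}$ with the iterated pullback minus its last stage is direct from the definition.)
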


\begin{proof}

The limit on the right is calculated as an iterated fiber
product as in the definition of $\tr_{\sC}(T^n)$, but where we omit
the last fiber product in its definition.
Therefore, the kernel we are trying to compute coincides with:

\[
\Ker\Big(\tr_{\sC}(T^n)^{h\bZ/n} \to 
\underset{p \text{ prime}}
{\underset{\varpi(d) = \varpi(n)-1}{{\underset{pd\mid n}{\prod}}}}
\tr_{\sC}(T^{pd})^{t\bZ/p,h\bZ/d}\Big)
\]

\noindent Note that on the right, $pd$ is necessarily equal to $n$,
so we can rewrite this expression as:

\[
\Ker\Big(\tr_{\sC}(T^n)^{h\bZ/n} \to 
\underset{p \text{ prime}}
{\underset{p\mid n}{\prod}}
\tr_{\sC}(T^n)^{t\bZ/p,h\bZ/(n/p)}\Big).
\]

Now we observe that for any $V \in \Sp^{\leq 0}$
with a (naive) $\bZ/n$-action, the map:

\[
V^{t\bZ/n} \to 
\underset{p \text{ prime}}
{\underset{p\mid n}{\prod}}
V^{t\bZ/p,h\bZ/(n/p)}
\]

\noindent is an isomorphism. Indeed, it is easy to 
see\footnote{E.g., one notes that $V^{t\bZ/n}$ is $n$-adically complete and
shows that $V^{t\bZ/p^{v_p(n)},h\bZ/(n/p^{v_p(n)})}$ is
its $p$-adic completion.}
that:

\[
V^{t\bZ/n} \isom \prod_{p \text{ prime}} 
V^{t\bZ/p^{v_p(n)},h\bZ/(n/p^{v_p(n)})}
\]

\noindent for arbitrary $V$, and in the connective case we can further
apply \cite{nikolaus-scholze} Lemma II.4.1, which is a version of
the Tate orbit lemma.

Therefore, by our connectivity assumption on traces
of powers of $T$, we need to calculate:

\[
\Ker\Big(\tr_{\sC}(T^n)^{h\bZ/n} \to 
\tr_{\sC}(T^n)^{t\bZ/n}
\Big)
\]

\noindent which is certainly $\tr_{\sC}(T^n)_{h\bZ/n}$.

\end{proof}

\subsection{Calculation of $\TC$}

We use the above as follows.

\begin{thm}\label{t:tc-calc}

Suppose that for every $n > 0$, $\tr_{\sC}(T^n) \in \Sp^{\leq -n}$.

Then there is a natural isomorphism:

\[
\TC_{red}(\SqZero(\sC,T)) \coloneqq 
\Ker\big(\TC(\SqZero(\sC,T)) \to \TC(\sC)\big) \isom 
\underset{n}{\lim} \, \tr_{\sC}(T^n)^{\bZ/n}
\]

\noindent where the limit is over positive integers
ordered under divisibility.

In particular, $\TC_{red}(\SqZero(\sC,T))$ has a 
complete decreasing filtration indexed by positive integers
under divisibility, and there is a canonical isomorphism:

\[
\gr_n \TC_{red}(\SqZero(\sC,T)) \isom 
\tr_{\sC}(T^n)_{h\bZ/n}.
\]

\end{thm}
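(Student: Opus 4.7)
The plan is to combine the explicit cyclotomic spectrum structure on $\THH(\SqZero(\sC,T))$ from Proposition \ref{p:thh}, Lemma \ref{l:cyclotomic}, and \S\ref{ss:cyc-str} with the Nikolaus-Scholze recipe expressing $\TC$ of a connective cyclotomic spectrum as an inverse limit of genuine fixed points. For any connective cyclotomic spectrum $Y$, there is an equivalence $\TC(Y) \isom \lim_n Y^{\bZ/n}$, where the right-hand side is built as an iterated fiber product of homotopy fixed points $Y^{h\bZ/d}$ and Tate-twisted fixed points $Y^{t\bZ/p, h\bZ/d}$ using the cyclotomic Frobenii, in exactly the format of \S\ref{ss:genuine}. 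The hypothesis $\tr_{\sC}(T^n) \in \Sp^{\leq -n}$ ensures that $\THH_{red}(\SqZero(\sC,T))$ is connective, so this formula applies and gives $\TC_{red}(\SqZero(\sC,T)) \isom \lim_n \THH_{red}(\SqZero(\sC,T))^{\bZ/n}$.

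The main identification to prove is then $\THH_{red}(\SqZero(\sC,T))^{\bZ/n} \isom \tr_{\sC}(T^n)^{\bZ/n}$. Using the direct-sum decomposition $\THH_{red}(\SqZero(\sC,T)) = \bigoplus_m \on{Ind}_{\bZ/m}^{\bB\bZ} \tr_{\sC}(T^m)$ and the description of the Frobenius in \S\ref{ss:cyc-str} (which sends the $m$-th summand into the $mp$-th summand via the Tate diagonal $\tr_{\sC}(T^m) \to \tr_{\sC}(T^{mp})^{t\bZ/p}$), one checks that only the $m = n$ summand contributes to the iterated fiber product defining $Y^{\bZ/n}$, and that the Tate diagonals produced in the process are exactly those used to define $\tr_{\sC}(T^n)^{\bZ/n}$ in \S\ref{ss:genuine}. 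The vanishing of contributions from $m \ne n$ rests on Tate-vanishing for $(\on{Ind}_{\bZ/m}^{\bB\bZ} V)^{t\bZ/p}$-type expressions in the spirit of \cite{nikolaus-scholze} Lemma II.4.1, made effective by our connectivity hypothesis (which forces direct sums and direct products to agree in the relevant ranges).

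Once the equivalence $\TC_{red}(\SqZero(\sC,T)) \isom \lim_n \tr_{\sC}(T^n)^{\bZ/n}$ is in hand, the filtration and graded identification are formal. The inverse limit over positive integers under divisibility carries a natural complete decreasing filtration whose layer at $n$ is $\Ker(\tr_{\sC}(T^n)^{\bZ/n} \to \lim_{d \mid n,\, d < n} \tr_{\sC}(T^d)^{\bZ/d})$, and Lemma \ref{l:norm-cofib} identifies this kernel with $\tr_{\sC}(T^n)_{h\bZ/n}$. Completeness of the filtration follows from cofinality of finite divisor-closed subsets in the divisibility poset. The main obstacle throughout is the verification in the second step: the identification on any single summand is essentially tautological from the construction of the cyclotomic structure, but ruling out cross-contributions from other summands requires careful manipulation of how $\on{Ind}_{\bZ/m}^{\bB\bZ}$ interacts with homotopy and Tate fixed points, and a careful matching of indexing conventions between $\bB\bZ$-subgroups, divisibility relations, and Frobenius shifts.
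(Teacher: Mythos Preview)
Your approach has a genuine gap. You invoke a general formula $\TC(Y) \isom \lim_n Y^{\bZ/n}$ for connective cyclotomic spectra $Y$, with $Y^{\bZ/n}$ defined as an iterated fiber product ``in the format of \S\ref{ss:genuine}'', and then claim the termwise identification $\THH_{red}(\SqZero(\sC,T))^{\bZ/n} \isom \tr_{\sC}(T^n)^{\bZ/n}$. But this termwise identification is false. The construction in \S\ref{ss:genuine} places $\tr_{\sC}(T^d)^{h\bZ/d}$ at the stage indexed by each divisor $d$ of $n$, whereas the analogous construction for a general cyclotomic spectrum $Y$ would place $Y^{h\bZ/d}$ there, and these differ. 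Already for $n = 1$ your claim reads $Y \isom \tr_{\sC}(T)$, which is wrong since $Y = \THH_{red}$ contains all the summands $\on{Ind}_{\bZ/m}^{\bB\bZ}\tr_{\sC}(T^m)$. Your slogan ``only the $m = n$ summand contributes'' simply does not hold at the level of individual genuine fixed points; it is a statement about the final equalizer, not about each stage.

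The paper's proof avoids this by working directly with the Nikolaus--Scholze equalizer formula $\TC_{red} = \on{Eq}\big(\THH_{red}^{h\bB\bZ} \rightrightarrows \prod_p \THH_{red}^{t\bZ/p,h\bB\bZ}\big)$ rather than through any intermediate $Y^{\bZ/n}$. The connectivity hypothesis converts the direct sum in Proposition~\ref{p:thh} to a product, after which one computes $\THH_{red}^{h\bB\bZ} = \prod_n \tr_{\sC}(T^n)^{h\bZ/n}$ via the $(\on{Ind}_{\bZ/n}^{\bB\bZ},\text{res})$ adjunction, and $\THH_{red}^{t\bZ/p,h\bB\bZ} = \prod_{p\mid n} \tr_{\sC}(T^n)^{t\bZ/p,h\bZ/(n/p)}$, the terms with $p\nmid n$ vanishing because $\on{Ind}_{\bZ/n}^{\bZ/pn}$ kills the Tate construction. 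One then checks, using Lemma~\ref{l:cyclotomic} to identify the Frobenius legs, that the resulting equalizer diagram is exactly the limit defining $\lim_n \tr_{\sC}(T^n)^{\bZ/n}$. The associated graded identification follows from Lemma~\ref{l:norm-cofib} as you say. So the ingredients you list are the right ones, but the organizing principle---pass to $h\bB\bZ$ of the whole cyclotomic spectrum first and unwind the equalizer, rather than to $h\bZ/d$ stage by stage---is different from what you propose, and your route does not go through as written.
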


\begin{rem}

This result is implicit in \cite{lindenstrauss-mccarthy}.   

\end{rem}

\begin{proof}[Proof of  Theorem \ref{t:tc-calc}]

By the Nikolaus-Scholze formula \cite{nikolaus-scholze} Corollary 1.5
for $\TC$ (using the connectivity assumption of \S \ref{ss:cyc-str}), 
we have:

\[
\TC_{red}(\SqZero(\sC,T)) = 
\on{Eq}\big(\THH_{red}(\SqZero(\sC,T))^{h\bB \bZ} \rightrightarrows
\prod_{p \text{ prime}}\THH_{red}(\SqZero(\sC,T))^{t\bZ/p,h\bB \bZ} \big).
\]

\noindent Here we recall that on the right hand side, 
we are taking the residual $\bB \bZ$-action using the
natural isomorphism 
$\bB \bZ = \Coker(\bZ/p \to \bB \bZ)$.\footnote{Here 
we are taking the cokernel in the category of
$\sE_{\infty}$-groups. In a less commutative setting, 
it would be better to note that 
there is a fiber sequence $\bZ/p \to \bB \bZ \xar{p} \bB \bZ$
with the right map surjective on $\pi_0$; this is the appropriate
notion of ``group quotient" in the homotopical setting.} 
We further recall that one map in the equalizer uses the
cyclotomic Frobenius, and the other uses the tautological
projection from homotopy $\bZ/p$-invariants to the Tate construction.

Now under our assumption of increasing connectivity on 
the traces of powers of $T$,
we have:

\begin{equation}\label{eq:sum=prod}
\THH_{red}(\SqZero(\sC,T) \xar{\overset{\text{Prop. \ref{p:thh}}}{\simeq}}
\underset{n}{\oplus} \, \on{Ind}_{\bZ/n}^{\bB \bZ} \tr_{\sC}(T^n) = 
\underset{n}{\prod} \, \on{Ind}_{\bZ/n}^{\bB \bZ} \tr_{\sC}(T^n).
\end{equation}

\noindent Therefore, we have:

\[
\begin{gathered}
\THH_{red}(\SqZero(\sC,T))^{h\bB \bZ} = 
\big(\underset{n}{\prod} \, \on{Ind}_{\bZ/n}^{\bB \bZ} \tr_{\sC}(T^n)\big)^{h \bB \bZ} = 
\underset{n}{\prod} \on{Ind}_{\bZ/n}^{\bB \bZ} \tr_{\sC}(T^n)^{h \bB \bZ} = \\
\underset{n}{\prod} \, \on{Ind}_{\bZ/n}^{\bB \bZ} \tr_{\sC}(T^n)^{h \bB \bZ} = 
\underset{n}{\prod} \, \tr_{\sC}(T^n)^{h \bZ/n}.
\end{gathered}
\]

For $p$ prime, we obtain:

\[
\THH_{red}(\SqZero(\sC,T))^{t\bZ/p,h\bB \bZ} =
\underset{n}{\prod} \, 
\big(\on{Ind}_{\bZ/n}^{\bB \bZ} \tr_{\sC}(T^n)\big)^{t\bZ/p,h \bB \bZ}.
\]

\noindent First, note that the factor $n$ vanishes if 
$p$ does not divide $n$. Indeed, in this case we have:

\[
\big(\on{Ind}_{\bZ/n}^{\bB \bZ} \tr_{\sC}(T^n)\big)^{t\bZ/p,h \bB \bZ} =
\big(\on{Ind}_{\bZ/pn}^{\bB \bZ} \on{Ind}_{\bZ/n}^{\bZ/pn} 
\tr_{\sC}(T^n)\big)^{t\bZ/p,h \bB \bZ} = 
\Big(\on{Ind}_{\bZ/pn}^{\bB \bZ} \big(\on{Ind}_{\bZ/n}^{\bZ/pn} 
\tr_{\sC}(T^n)\big)^{t\bZ/p}\Big)^{h \bB \bZ} = 0.
\]

\noindent And if $p$ does divide $n$, then
we have:

\[
\on{Ind}_{\bZ/n}^{\bB \bZ} \tr_{\sC}(T^n)\big)^{t\bZ/p,h \bB \bZ} =
\tr_{\sC}(T^n)^{t\bZ/p,h \bZ/(n/p)}.
\]

By Lemma \ref{l:cyclotomic}, the Frobenius map at $p$ is given
by the product over $n$ of the maps:

\[
\tr_{\sC}(T^n)^{h \bZ/n} \to \tr_{\sC}(T^{pn})^{t\bZ/p,h \bZ/n}.
\]

Now the fact that the equalizer above is the 
limit of genuine fixed points is formal from the definition
of genuine fixed points.
Moreover, the associated graded term was already calculated
in Lemma \ref{l:norm-cofib}.

\end{proof}

\begin{rem}

The mild connectivity assumption that $\tr_{\sC}(T^n) \in \Sp^{\leq 0}$
for all $n$ played an inessential role in our calculation of the
cyclotomic structure on $\THH(\SqZero(\sC,T))$
and in the definition of genuine fixed
points $\tr_{\sC}(T^n)^{\bZ/n}$: it is straightforward to generalize
to the non-connective setting here. In other words, we were
merely lazy there (the cost being our use of the Tate orbit lemma).

However, in the proof of the above theorem, the harsher
assumption that 
the connectivity of $\tr_{\sC}(T^n)$ tends to $\infty$ with $n$
played an essential role above. Indeed, it was crucially used 
in \eqref{eq:sum=prod}, which for example allowed us to
compute the homotopy $\bB\bZ$-invariants termwise. 

\end{rem}

\subsection{Proof of the main theorem}

We can now prove the main result of this section.

\begin{proof}[Proof of Theorem \ref{t:tc}]

For (essentially notational) convenience, we begin by ignoring
the compatibility with $K$-theory but treating the
general categorical setup, using Proposition \ref{p:sqzero-comparison}.

As $t$-structures are used in Theorem \ref{t:tc}, we define
a $t$-structure on $\TwoEnd(\sC)$ by setting
$T \in \TwoEnd(\sC)^{\leq 0}$ if $\tr_{\sC}(T^n) \in \Sp^{\leq -n}$
for every $n>0$. Note that for $A$ a connective
$\sE_1$-algebra and $M \in A\bimod^{\leq 0}$, 
we have:\footnote{Here we are using the $t$-structure
we just constructed. So in other words, there are 
more connective objects in this $t$-structure than
the usual one on $\TwoEnd(A\mod) = A\bimod$.}

\[
T_M \in \TwoEnd(A\mod)^{\leq 0}
\]

\noindent as $T_M \coloneqq M\otimes_A - [1]$ gives
so $\tr_{A\mod}(T_M^n) = \THH(A,M^{\otimes n}[n])$.

Now first observe that the functor:

\[
\begin{gathered}
\TwoEnd(\sC) \to \Sp \\
T \mapsto \tr_{\sC}(T^n)_{h\bZ/n}
\end{gathered}
\]

\noindent commutes with sifted colimits. Indeed, $T \mapsto T^n$
commutes with sifted colimits (as composition of functors commutes
with colimits in each variable), and traces and coinvariants
both commute with all colimits. 

Moreover, for $n \neq 1$, we claim that
the Goodwillie derivative of this
functor vanishes. Again, by exactness of traces and 
coinvariants, it suffices to show this for the functor sending $T$ to
its $n$-fold self-composition $T^n$. 
Here it is straightforward\footnote{See \cite{higheralgebra}
Proposition 6.1.3.4 for a statement in the general setting of
Goodwillie calculus.}
to see that the natural map:

\[
T^n \to \Omega((\Sigma T)^n) = \Omega(\Sigma^n(T^n)) = \Sigma^{n-1} (T^n)
\]

\noindent is nullhomotopic (and naturally so in $T$).

Therefore, by Lemma \ref{l:norm-cofib} and induction,
the functor:\footnote{Here we ask the reader to believe that
Lemma \ref{l:norm-cofib} is true in the non-connective setting
for appropriate definition of genuine fixed points, or to
read $\TwoEnd(\sC)$ as $\TwoEnd(\sC)^{\leq 1}$.}

\[
\begin{gathered}
\TwoEnd(\sC) \to \Sp \\
T \mapsto \tr_{\sC}(T^n)^{\bZ/n} 
\end{gathered}
\]

\noindent commutes with sifted colimits, the canonical projection
$\tr_{\sC}(T^n)^{\bZ/n} \to \tr_{\sC}(T)$ realizes
the right hand side as the Goodwillie derivative of the
left hand side (as functors of $T$).

Next, observe that for $T \in \TwoEnd(\sC)^{\leq 0}$, we have:

\[
\Ker\big(\TC_{red}(\SqZero(\sC,T)) \to \tr_{\sC}(T^{n!})^{\bZ/n!}\big)
\in \Sp^{<-n}
\]

\noindent by Theorem \ref{t:tc-calc}, as 
$\gr_m \TC_{red}(\SqZero(\sC,T)) = 
\tr_{\sC}(T^m)_{h\bZ/m} \in \Sp^{< -n}$ for $m > n$.
From here the results are formal:
by left completeness of the $t$-structure on $\Sp$,
we obtain Theorem \ref{t:tc} \eqref{i:tc-conn},
and similarly the identification of the Goodwillie derivative
from Theorem \ref{t:tc} \eqref{i:tc-deriv}.
Moreover, Lemma \ref{l:ext'n-left-complete} 
and the observation that each 
$\TwoEnd(\sC)^{\leq 0} \xar{T \mapsto \tr_{\sC}(T^n)^{\bZ/n}}
\Sp $ is manifestly extensible imply
pseudo-extensibility of $\TC_{red}$, i.e., 
Theorem \ref{t:tc} \eqref{i:ps-ext}.
 
It now remains to show the compatibility with the cyclotomic
trace from $K$-theory and the 
identifications of Goodwillie derivatives.
For this, we suppose that $\sC$ is compactly generated.
Recall from \S \ref{ss:ct} that in this case we have 
$\SqZero(\sC,T) \subset \sC^T$, with this inclusion preserving
compact objects. We will show that the diagram:

\[
\xymatrix{
K(\SqZero(\sC,T)^c) \ar[r] \ar[d] & K(\sC^{T,c}) \ar[d] \\
\TC(\SqZero(\sC,T)) \ar[r] & \tr_{\sC}(T)
}
\]

\noindent commutes, where the left vertical arrow is the
cyclotomic trace map, and the bottom and right arrows
are maps to Goodwillie derivatives (using the above
for $\TC$ and Theorem \ref{t:k-deriv} for $K$-theory).
This suffices by Lemma \ref{l:ct=sqzero}.

Now observe that (by construction) the map
$\TC(\SqZero(\sC,T)) \to \tr_{\sC}(T)$ factors
through $\THH(\SqZero(\sC,T)$.
Hence, we can show the commutation of the above diagram 
with $\THH$ in place of $\TC$, and the Dennis trace
replacing the cyclotomic trace.

By Lemma \ref{l:nat-trans-sp},
it suffices to show the commutation of the
above diagram functorially in $T$ after applying $\Omega^{\infty}$.
Moreover, by Lemma \ref{l:dk-init}, it suffices to show the commutation
of the diagram:

\[
\xymatrix{
\SqZero(\sC,T)^{c,\simeq} \ar[r] \ar[d] & \Omega^{\infty} K(\sC^{T,c}) \ar[d] \\
\Omega^{\infty} \THH(\SqZero(\sC,T)) \ar[r] & \Omega^{\infty} \tr_{\sC}(T).
}
\]

Now suppose $(\sF,\eta) \in \SqZero(\sC,T)^c$, i.e.,
$\sF \in \sC^c$ and $\eta:\sF \to T(\sF)$ is locally nilpotent.
Recall that Example \ref{e:trace} produced a point
$\tr_{\sF}(\eta) \in \Omega^{\infty} \tr_{\sC}(T)$,
i.e., \emph{loc. cit}. gave a map
$\SqZero(\sC,T)^{c,\simeq} \to  \Omega^{\infty} \tr_{\sC}(T)$. We
claim that each leg of the above diagram identifies with
this map. 

For the upper leg of the diagram, this is tautological from
the proof of Lemma \ref{l:dk-init}.

We now treat the lower leg. The Dennis trace map applied
to $(\sF,\eta)$ produces a point
of $\Omega^{\infty} \THH(\SqZero(\sC,T))$, which (by definition of
the Dennis trace) is $\tr_{(\sF,\eta)}(\id_{\sF,\eta})$ in the notation
of Example \ref{e:trace}.
The proof of Proposition \ref{p:thh}
shows that its image under $\Omega^{\infty}$ of the map:\footnote{Note
that we use a direct sum indexed by $n \geq 0$ in the last term;
the $0$-fold composition of $T$ with itself is the identity functor,
so the leading term here is $\THH(\sC)$.}

\[
\THH(\SqZero(\sC,T)) = 
\THH(\sC) \bigoplus
\oplus_{n> 0} \, \on{Ind}_{\bZ/n}^{\bB \bZ} \tr_{\sC}(T^n)
\to 
\oplus_{n\geq 0} \, \tr_{\sC}(T^n) 
\]

\noindent is 
$(\tr_{\sF}(\id_{\sF}),\tr_{\sF}(\eta),\tr_{\sF}(T(\eta)\eta),\ldots)$
(which genuinely gives a point of the direct sum by local nilpotence).
Clearly the projection of this point to $\tr_{\sC}(T)$ is
$\tr_{\sF}(\eta)$.

\end{proof}

\section{Reduction to the split square-zero case}\label{s:gen'l}

\subsection{}

In this section, we prove the general form of Theorem \ref{t:dgm}.

\subsection{}

We return to the general format of Theorem \ref{t:conn-algs}
and Corollary \ref{c:full-vanishing},
letting $\Psi:\Alg_{conn} \to \Sp$ be a functor.
Our present goal is to axiomatize when $\Psi$ is constant
along general nilpotent extensions.

\subsection{Square-zero extensions}

We briefly review the theory of 
square-zero in the homotopical setting. 

Let $A \in \Alg$ be fixed, and let $m:A \otimes A \to A$
denote the multiplication. For $I \in A\bimod$,
a \emph{square-zero extension} of $A$ by $I$ is
by definition a morphism $\delta:\Ker(m) \to I[1] \in A\bimod$.

To obtain an algebra from such a datum, recall that
$\delta$ is equivalent by adjunction
to a morphism $A \to \SqZero(A,I[1]) \in \Alg$. We then set the underlying
algebra of $(A,I,\delta)$ to be the fiber product:

\[
A \underset{\SqZero(A,I[1])}{\times} A
\]

\noindent where one of the structure morphisms is induced
by $\delta$ and the other by the zero map $0 \to I[1]$
(noting $A = \SqZero(A,0)$). For example, for
$\delta = 0$ we recover the split square-zero extension
$\SqZero(A,I)$. 

This construction enhances to define a category 
$\Alg^{\SqZero}$ whose objects are data
$(A,I,\delta)$, and where morphisms
$(A_1,I_1,\delta_1) \to (A_2,I_2,\delta_2)$
consist of a morphism $f:A_1 \to A_2$ of algebras
and a commutative diagram of $A_1$-bimodules:

\[
\xymatrix{
\Ker(m_1) \ar[d]^{\delta_1} \ar[r]^f & \Ker(m_2) \ar[d]^{\delta_2} \\
I_1[1] \ar[r] & I_2[1].
}
\]

We let $\Alg_{conn}^{\SqZero}$ be the full subcategory
of $\Alg^{\SqZero}$ consisting of objects $(A,I,\delta)$ where
$A \in \Alg_{conn}$ and $I \in A\bimod^{\leq 0}$.

\subsection{Notation}\label{ss:rel-notation}

Throughout this section, for $\Psi:\Alg_{conn} \to \Sp$
and $B \to A$ a square-zero extension, we will use
the notation $\Psi(B/A)$ for $\Ker(\Psi(B) \to \Psi(A))$.

\subsection{}

We now introduce the following hypotheses on our functor 
$\Psi:\Alg_{conn} \to \Sp$.

\begin{defin}

$\Psi$ is
\emph{convergent} if for any $A \in \Alg_{conn}$,
the natural morphism:

\[
\Psi(A) \to \underset{n}{\lim} \, \Psi(\tau^{\geq -n} A)
\]

\noindent is an isomorphism.

\end{defin}

\begin{defin}

$\Psi$ \emph{infinitesimally commutes with sifted colimits}
if the functor:

\[
\begin{gathered}
\Alg_{conn}^{\SqZero} \to \Sp \\
(B \to A) \mapsto \Psi(B/A)
\end{gathered}
\]

\noindent commutes with sifted colimits.

\end{defin}

\begin{prop}\label{p:inf}

Suppose that $\Psi$ is convergent and infinitesimally commutes with
sifted colimits.
Suppose moreover that $\Psi$ is constant on split square-zero
extensions, i.e., for every $A \in \Alg_{conn}$ and
$M \in A\bimod^{\leq 0}$, the
morphism $\Psi(\SqZero(A,M)/A) = 0$.

Then $\Psi$ is \emph{infinitesimally constant}, that is,
for every $f:B \to A \in \Alg_{conn}$ with $H^0(B) \to H^0(A)$
surjective with nilpotent kernel, the map
$\Psi(B) \to \Psi(A)$ is an isomorphism.

\end{prop}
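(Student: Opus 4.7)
The plan is to reduce Proposition~\ref{p:inf} to the hypothesis on split square-zero extensions via three successive reductions: convergence strips us down to bounded algebras; Postnikov induction together with a discrete-ideal filtration reduces to individual square-zero extensions; and the sifted-colimit hypothesis is used to pass from arbitrary to split square-zero extensions.

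I would first exploit convergence: $\Psi(B/A) = \lim_n \Psi(\tau^{\geq -n} B / \tau^{\geq -n} A)$, and since the nilpotence condition on $H^0$ is unaffected by truncation, it suffices to handle the case when both $B$ and $A$ are bounded. I then induct on the truncation level. In the base case (both $B$ and $A$ discrete), the kernel $I = \Ker(B \to A)$ is a nilpotent two-sided ideal; filtering by its powers $I \supset I^2 \supset \cdots \supset I^{k+1} = 0$ expresses $B \to A$ as a finite chain of discrete square-zero extensions. For the inductive step, set $B' = \tau^{\geq -n+1} B$ and $A' = \tau^{\geq -n+1} A$. The induction hypothesis supplies $\Psi(B'/A') = 0$, and the vertical maps $B \to B'$ and $A \to A'$ are themselves square-zero extensions, with kernels $H^{-n}(B)[n]$ and $H^{-n}(A)[n]$ in $A\bimod^{\leq -n}$. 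The total-fiber decomposition of the square $\{B \to A;\, B' \to A'\}$ yields $\Psi(B/A) \simeq \fib(\Psi(B/B') \to \Psi(A/A'))$, which reduces both the inductive step and the discrete filtration to a single claim: $\Psi(C/A) = 0$ for every individual square-zero extension $(A, I, \delta) \in \Alg_{conn}^{\SqZero}$ with $I \in A\bimod^{\leq 0}$.

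At this point the remaining hypotheses combine. The functor $\tilde{\Psi}: \Alg_{conn}^{\SqZero} \to \Sp$ sending $(A, I, \delta) \mapsto \Psi(C/A)$ commutes with sifted colimits and vanishes when $\delta = 0$. It therefore suffices to exhibit every $(A, I, \delta)$ as a geometric realization of split square-zero extensions inside $\Alg_{conn}^{\SqZero}$. My plan is to use the monadic (bar) resolution $A = |A_\bullet|$ by free $\sE_1$-algebras $A_n = T^{n+1}(A)$ for the free-$\sE_1$-algebra monad $T$ on $\Sp^{\leq 0}$; pulling back $I$ and $\delta$ along each $A_n \to A$ produces simplicial data $(A_\bullet, I_\bullet, \delta_\bullet)$ whose geometric realization recovers $(A, I, \delta)$. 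The key is to refine this into a resolution in which each $\delta_n$ vanishes on the nose, using that for free $A_n = \on{Free}(V_n)$ the classifying map out of $\Ker(m_{A_n})$ is controlled by a spectrum-level map $V_n \to I[1]$ which can be trivialized by enlarging $I_n$ via a further sifted resolution.

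The main obstacle is this last construction: exhibiting $(A, I, \delta)$ as a sifted colimit of split square-zero extensions inside $\Alg_{conn}^{\SqZero}$ compatibly with the classifying maps. The delicate point is that morphisms in $\Alg_{conn}^{\SqZero}$ enforce a strict compatibility of $\delta$'s, so one cannot naively set $\delta_n = 0$ in each simplicial degree — rather, the null-homotopies witnessing $\delta_n = 0$ must themselves be coherent and must reassemble (via the higher simplicial data) to the original $\delta$ on the geometric realization. Once this resolution is in hand, vanishing of $\tilde{\Psi}$ on split extensions together with its commutation with sifted colimits forces $\tilde{\Psi}(A, I, \delta) = 0$, completing the reduction and the proof.
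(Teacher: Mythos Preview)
Your overall strategy matches the paper's: reduce to individual square-zero extensions, then resolve the base by free $\sE_1$-algebras and use that square-zero extensions of free algebras are split. The order of reductions is reorganized (you do convergence first, the paper does square-zero first), but this is inessential.

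However, your treatment of the final step contains one phantom difficulty and one real gap.

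\textbf{The phantom difficulty.} You worry that the null-homotopies witnessing $\delta_n \simeq 0$ must be simplicially coherent. They need not be. Once you have a simplicial object $(A_\bullet, I_\bullet, \delta_\bullet)$ in $\Alg_{conn}^{\SqZero}$ with geometric realization $(A,I,\delta)$, infinitesimal commutation with sifted colimits gives $\tilde{\Psi}(A,I,\delta) = |\tilde{\Psi}(A_n,I_n,\delta_n)|$. If each $\delta_n$ is \emph{individually} nullhomotopic, then each term of this simplicial spectrum is zero, and the realization of the zero diagram is zero. No coherence of the splittings is required; the simplicial structure maps on the zero diagram are forced.

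\textbf{The real gap.} Your proposed resolution is the bar resolution $A_n = T^{n+1}(A)$, i.e.\ free algebras on general connective spectra $V_n$. For such $A_n$ the classifying map $\delta_n$ corresponds to a map of spectra $V_n \to I_n[1]$, and this need \emph{not} be nullhomotopic: $V_n$ may have higher homotopy mapping nontrivially into $I_n[1] \in \Sp^{\leq -1}$. The paper instead resolves $A$ as a geometric realization of free algebras $F_n = T(\bS^{\oplus J_n})$ on wedges of sphere spectra concentrated in degree $0$ (using that $\Alg_{conn}$ is generated under sifted colimits by such algebras, cf.\ \cite{htt} \S 5.5.8). Then any map $\bS^{\oplus J_n} \to I_n[1]$ lands in $\pi_0(I_n[1]) = 0$ and is automatically null, so each $\delta_n$ is split with no further work. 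Pulling back $B$ along $F_n \to A$ gives the required simplicial object in $\Alg_{conn}^{\SqZero}$, and the argument concludes as above. Your suggestion to ``enlarge $I_n$ via a further sifted resolution'' is unnecessary once the correct class of free algebras is used.
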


\begin{proof}

First, suppose $B \to A$ is a square-zero extension.

Note that if $A = T(V)$ is a \emph{free} $\sE_1$-algebra
on $V = \bS^{\oplus J}$ for some set $J$, then the 
then this square-zero extension is necessarily
a split square-zero extension. Indeed,
$\Ker(m)$ is then canonically isomorphic
to $T(V) \otimes V \otimes T(V)$ as a $T(V)$-bimodule;
in particular, it is free on $V$. It follows that
for $I \in T(V)\bimod^{-1}$, any morphism
$\Ker(m) \to I[1]$ is necessarily nullhomotopic.
In particular, our hypothesis implies $\Psi(B) \isom \Psi(A)$
for $A$ of this form.

In general, $A = |F_{\dot}|$ can be written
as a geometric realization of free $\sE_1$-algebras 
$F_n$ as above.\footnote{This follows
from the formalism of \cite{htt} \S 5.5.8, especially Lemma 5.5.8.14.}

 Then $B = |F_{\dot} \times_A B|$,
and each $F_n \times_A B \to F_n$ is a square-zero
extension, and this is a simplicial object of
$\Alg_{conn}^{\SqZero}$. Therefore, if
$\Psi$ infinitesimally commutes with sifted colimits,
we have:

\[
\Psi(B/A) =  
|\Psi(F_{\dot} \times_A B/F_{\dot})| = 
|0| = 0.
\]

\noindent This shows the result for general square-zero extensions.

Next, fix $A$. Recall (c.f. \cite{higheralgebra}
Corollary 7.4.1.28) that each morphism 
$\tau^{\geq -n -1}A \to \tau^{\geq -n} A$ admits a structure
of square-zero extension, so by the above
is an isomorphism on $\Psi$. By convergence, we then have:

\[
\Psi(A) \isom \underset{n}{\lim} \, \Psi(\tau^{\geq -n} A) \isom
\Psi(H^0(A)).
\]

Then for $f:B \to A$ surjective with nilpotent kernel,
we have $\Psi(B) = \Psi(H^0(B))$ and $\Psi(A) = \Psi(H^0(A))$.
Clearly $H^0(B) \to H^0(A)$ is a composition of square-zero
extensions, so we obtain the result.

\end{proof}

\subsection{Application to Dundas-Goodwillie-McCarthy}

Proposition \ref{p:inf} and the split square-zero case of
Theorem \ref{t:dgm} reduce the general case of
Theorem \ref{t:dgm} to the following to results.

\begin{thm}\label{t:conv/inf}

The functors $K,\TC:\Alg_{conn} \to \Sp$ are
convergent and infinitesimally commute with sifted colimits.

\end{thm}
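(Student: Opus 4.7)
The plan is to treat the two properties (convergence and infinitesimal commutation with sifted colimits) separately for $K$ and for $\TC$, with the common thread being connectivity estimates for maps of connective $\sE_1$-algebras.

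For convergence, I would show that for each of $\Psi = K$ and $\Psi = \TC$, the fiber of $\Psi(A) \to \Psi(\tau^{\geq -n}A)$ lies in $\Sp^{\leq -n-c}$ for some absolute constant $c$; passing to the limit over the Postnikov tower then gives the desired isomorphism. For $\THH$ this is standard, as the cyclic bar construction preserves connectivity of maps. For $\TC$ I would feed this into the Nikolaus--Scholze formula $\TC = \on{eq}(\THH^{hS^1} \rightrightarrows \prod_p \THH^{tC_p, hS^1/C_p})$: each of the operations $(-)^{hS^1}$ and $(-)^{tC_p, hS^1/C_p}$ shifts connectivity by only a bounded amount (and by nothing at all in sufficiently low cohomological degrees, given mild boundedness of inputs), so $\TC$ inherits the estimate from $\THH$. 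For $K$, I would use that each transition in the Postnikov tower $\tau^{\geq -n-1}A \to \tau^{\geq -n}A$ is itself a square-zero extension by a bimodule concentrated in degree $-(n+1)$, and then control the induced map on the Waldhausen $S_\bullet$ construction: mapping spectra between finitely generated projective modules depend only on $\tau^{\geq -N}A$ for $N$ controlled by the homotopical degree under consideration.

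For infinitesimal commutation with sifted colimits of $\TC$, the starting point is that $\THH$ commutes with sifted colimits of $\sE_1$-algebras (it is a trace), so $\THH(B/A)$ does as well by stability of the fiber sequence in $\Sp$. The passage from $\THH$ to $\TC$ is more delicate because $(-)^{hS^1}$ and its variants do not commute with sifted colimits in general. The way around is to use the filtration from Theorem \ref{t:tc-calc} (or a mild generalization to the category of all square-zero extensions, not merely the split ones): one shows $\TC_{red}(B/A)$ carries a complete decreasing filtration whose associated graded is of the form $\tr_{\sC}(T^n)_{hC_n}$ for a suitable endofunctor $T$ built functorially from $(I, \delta)$, each such piece is manifestly sifted-colimit-preserving in the triple $(A, I, \delta)$, and completeness follows from the increasing connectivity of the pieces (the same connectivity argument as in the convergence step).

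The main obstacle is infinitesimal commutation with sifted colimits for $K$-theory, since unlike $\THH$ the functor $K$ does not commute with sifted colimits of $\sE_1$-algebras in any useful sense. I expect this to require the Volodin-style result alluded to as Theorem \ref{t:volodin} in the introduction: one models $K(B/A)$ for a square-zero (more generally, nilpotent) extension by an explicit construction built functorially out of the bimodule $I$ and the classifying map $\delta$, and then verifies that this model manifestly commutes with sifted colimits in $(A, I, \delta) \in \Alg_{conn}^{\SqZero}$. Reducing the relative $K$-theory of a square-zero extension to a colimit-friendly Volodin-type model is the real content of the argument, and is where one would expect to invoke the categorical framework developed in \S \ref{s:k}.
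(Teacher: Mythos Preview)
Your plan for convergence of both $K$ and $\TC$, and for infinitesimal commutation with sifted colimits of $K$-theory via the Volodin model, matches the paper's approach. The paper's convergence arguments are terser than yours but equivalent in content (for $K$ it simply notes $\tau^{\geq -n-1}K(A) = \tau^{\geq -n-1}K(\tau^{\geq -n}A)$; for $\TC$ it checks convergence of $\THH$, then $\TC^-$, then $\TP$ and its profinite completion separately). For $K$-theory, you correctly anticipate that the Volodin construction (Theorem~\ref{t:volodin}) is the key input: the paper uses it to write $\Omega^{\infty+1}K(B/A)$ as a quotient $GL_\infty(B/A)/\on{St}^{der}(B/A)$, each piece of which visibly commutes with sifted colimits in $(B\to A)$.

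Where you diverge from the paper is the $\TC$ case of infinitesimal commutation with sifted colimits. You propose to extend the filtration of Theorem~\ref{t:tc-calc} from split to arbitrary square-zero extensions, with graded pieces of the form $\tr_{\sC}(T^n)_{h\bZ/n}$ for a functorially constructed $T$. This is not what the paper does, and the generalization is not as mild as you suggest: for a non-split extension there is no endofunctor $T$, only a two-step filtration on $B$ inducing a \emph{filtered} cyclotomic structure on $\THH(B/A)$, and extracting a complete filtration on $\TC$ with the desired graded pieces from this data is exactly the kind of limit-colimit interchange that fails in general. The paper instead splits the problem into $\TC(-)/p$ and $\TC(-)_{\bQ}$. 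For the mod-$p$ part it cites \cite{cmm} that $\TC(-)/p$ commutes with sifted colimits on all of $\Alg_{conn}$. For the rational part it proves (Theorem~\ref{t:rat'l}) that $\TC(B/A)_{\bQ} \simeq \TC(B_{\bQ}/A_{\bQ})$ via a meromorphic Frobenius argument on filtered cyclotomic spectra; since $\TP$ vanishes rationally in the relative setting, this reduces $\TC(B/A)_{\bQ}$ to $\THH(B_{\bQ}/A_{\bQ})_{h\bB\bZ}[1]$, which manifestly commutes with sifted colimits. Your filtration idea is in the right spirit and related machinery does appear in the paper's proof of Theorem~\ref{t:rat'l}, but the paper uses it to prove the rationalization statement rather than to directly filter $\TC(B/A)$.
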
 

We prove this result separately for $K$-theory and $\TC$
below.

\subsection{$K$-theory}

First, convergence of $K$-theory is simple: the 
description of $K$-theory as a group completion
makes it clear that 
$\tau^{\geq -n-1} K(A) = \tau^{\geq -n-1} K(\tau^{\geq -n}A)$.

We will show $K$-theory infinitesimally commutes
with sifted colimits essentially following \cite{dgm}.
The proof uses \emph{Volodin's construction}, which is
an avatar of Milnor's definition of $K_2$ that
we review below.\footnote{I do not feel like I 
understand this method so well. Perhaps there is a more conceptual
approach.}

\subsection{Volodin theory}

Fix $A \in \Alg_{conn}$ in what follows. 
Recall that $\Proj(A)$ denotes the
category of projective $A$-modules, i.e., summands of $A^{\oplus n}$.
Let $\Proj(A)^{\simeq} \in \Gpd$ denote the underlying groupoid of this
category. Direct sums make $\Proj(A)^{\simeq}$ into an $\sE_{\infty}$-space,
and we recall that $\Omega^{\infty} K(A)$ is its group completion.

Note that if once we invert $A \in \Proj(A)^{\simeq}$, we obtain
a group. This motivates considering the canonical map:

\[
\chi = \chi_A:\underset{n \geq 0}{\colim} \, \Proj(A)^{\simeq} \to \Omega^{\infty} K(A) 
\in \Gpd
\]

\noindent where the structure maps in this colimit are given
by adding $A \in \Proj(A)$. Note that the left hand side
does not necessarily admit an $\sE_{\infty}$-structure, and
this map is not at all an equivalence. However, it is
an isomorphism on $\pi_0$.

Volodin's construction will provide a convenient expression for 
$\on{fib}(\chi)$ (the fiber over $0 \in \Omega^{\infty} K(A)$).

\subsection{}

For $n \geq 0$, let $GL_n(A)$ denote the group\footnote{Meaning
group-like $\sE_1$-groupoid.} of automorphisms
of $A^{\oplus n}$. 

Consider $A^{\oplus n}$ as a filtered $A$-module
in the standard way $0 \to A \to A^{\oplus 2} \to \ldots \to A^{\oplus n}$.
Let $B_n(A)$ be the group of automorphisms of
$A^{\oplus n}$ as a filtered $A$-module and let
$U_n(A)$ denote the kernel of the ``symbol" map:\footnote{I.e.,
the $i$th factor takes the induced automorphism
of $\gr_i A^{\oplus n} = A$.}

\[
B_n(A) \to \prod_{i=1}^n GL_1(A).
\]

\subsection{}\label{ss:volodin-funct}

We will need some care about the functoriality of the above
constructions. 

Let $\fSet^{inj}$ 
denote the category of finite sets and injective maps between them.
Then for any $I \in \fSet^{inj}$, we have 
$GL_I(A) \coloneqq \Aut(A^{\oplus I})$; and for $I \into J$, we have
a morphism $GL_I(A) \to GL_J(A)$ induced by the direct sum decomposition
$A^{\oplus J} = A^{\oplus I} \oplus A^{\oplus J\setminus I}$.

Relatedly, we have a functor from $\fSet^{inj} \to \Gpd$
sending $I$ to $\Proj(A)^{\simeq}$ for every
$I$, and sending a morphism $I \into J$ to the map 
$\Proj(A)^{\simeq} \xar{-\oplus A^{J \setminus I}} \Proj(A)^{\simeq}$.
Note that the source of the map $\chi$ is obtained by taking
the colimit of this construction along the map
$\bZ^{\geq 0} \xar{n \mapsto \{1,\ldots,n\}} \fSet^{inj}$.
(And one can show that the target is the colimit along
$\fSet^{inj}$, although we will not need this fact.)

The construction of $U_n$ depends also on a linear ordering,
i.e., for $I \in \bDelta_{aug}^{inj}$ a finite set with a linear ordering,
we have the group $U_I(A)$ functorial for injective
order preserving maps, and mapping naturally to
mapping to $GL_I(A)$. 

Moreover, the natural map: 

\[
\bB U_I(A) \to \bB GL_I(A) \to \Proj(A)^{\simeq} \to 
\Omega^{\infty} K(A)
\]

\noindent is canonically constant with constant value $A^{\oplus I}$.
Indeed, this is clear from the Waldhausen realization of $K(A)$.

\subsection{}

Putting the functoriality above
together, let $\sJ$ denote the
category (even poset) of
pairs $I \in \bDelta_{aug}^{inj}$ and
an isomorphism $\alpha:I \simeq \{1,\ldots,n\}$,
where morphisms $(I,\alpha) \to (J,\beta)$
are order-preserving maps $f:I \to J$ making
the diagram:

\[
\xymatrix{
I \ar[d]^{\alpha} \ar[r]^f & J \ar[d]^{\beta} \\
\{1,\ldots,n\} \ar@{^(->}[r] & \{1,\ldots,m\}
}
\]

\noindent commute, where the bottom arrow is the
standard embedding.

\begin{defin}

The \emph{Volodin space} 
$\sX(A)$ is the groupoid
$\colim_{(I,\alpha) \in \sJ} \bB U_I(A)$.

\end{defin}

We claim the observations from \S \ref{ss:volodin-funct} 
in effect equip $\sX(A)$ with a canonical map to
$\on{fib}(\chi)$. 

Indeed, note that
$\sJ$ maps to $\bZ^{\geq 0}$ (using the
isomorphism $\alpha$ from the pair $(I,\alpha)$),
inducing a map to
$\bB U_I(A) \to \bB GL_{|I|}(A) \to \Proj(A)^{\simeq}$.
On colimits, we obtain a map
to $\colim_{n \geq 0} \Proj(A)$, i.e., the source
of the map $\chi$. Then the 
Waldhausen realization of $K$-theory
shows that the composite map to
$\Omega^{\infty} K(A)$ is constant with
value the base-point.

\begin{thm}\label{t:volodin}

The map $\sX(A) \to \on{fib}(\chi)$ is an equivalence.

\end{thm}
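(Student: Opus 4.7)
The plan is to identify both $\sX(A)$ and $\on{fib}(\chi)$ with a common model constructed from the Waldhausen $S_\bullet$-construction $wS_\bullet \Proj(A)$, thereby realizing Theorem \ref{t:volodin} as the homotopy-coherent incarnation of Volodin's classical theorem that the homotopy fiber of $\bB GL(A) \to \bB GL(A)^+$ is the Volodin space. The map $\sX(A) \to \on{fib}(\chi)$ has already been constructed; the substance lies in proving it is an equivalence.

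First I would use the Waldhausen identification $\Omega^\infty K(A) \simeq \Omega|wS_\bullet \Proj(A)|$, under which $\chi$ factors through the colimit projection $\colim_n \Proj(A)^\simeq \to wS_1 \Proj(A) = \Proj(A)^\simeq$ followed by the group-completion/Segal delooping map. Consequently, $\on{fib}(\chi)$ over $0$ admits a concrete path-space description: it is the based loop space of the pair $(|wS_\bullet \Proj(A)|, \colim_n \Proj(A)^\simeq)$, or equivalently a homotopy colimit indexed by simplices $\sigma \in wS_n\Proj(A)$ whose outer faces land at the base-point. The key geometric observation is that for each $(I,\alpha) \in \sJ$ the standard filtered object $0 \into A \into A^{\oplus 2} \into \cdots \into A^{\oplus|I|}$, with successive quotients tautologically identified with $A$, determines a canonical such simplex $\sigma_{(I,\alpha)} \in wS_{|I|}\Proj(A)$; the group of automorphisms of this simplex fixing the trivializations of its successive quotients is precisely $U_I(A)$. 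Assembling over $\sJ$ yields $\sX(A)$ and one checks this matches the map built in the paragraph preceding the theorem.

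The main obstacle, and the technical heart of the argument, is the cofinality needed to promote this matching to an equivalence: one must show that the ``standard'' simplices $\sigma_{(I,\alpha)}$, acted on by the unipotent groups $U_I(A)$, generate under colimits all the null-homotopies parameterized by $\on{fib}(\chi)$. The essential input is that every compact $A$-module is a summand of some $A^{\oplus n}$, so any simplex of $wS_\bullet\Proj(A)$ can be enhanced---after direct-summing with enough copies of $A$---to one whose successive quotients are all $A$; the colimit over $\sJ$ is then designed precisely so that this stabilization is harmless.

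Making this precise at the level of homotopy colimits is where the classical proof invokes Suslin's stabilization lemma, Quillen's Theorem A, or a direct bar-construction argument; in the present $\sE_1$-setting I would proceed similarly, computing both $\sX(A)$ and the path-space description of $\on{fib}(\chi)$ as explicit two-sided bar constructions and exhibiting a cofinal comparison of their indexing categories. The hardest part of this reduction is verifying cofinality up-to-homotopy in the higher-categorical sense, i.e.\ checking the requisite slice categories are contractible; here one should be able to use that the addition-of-$A$ maps are cofinal in $\sJ$ combined with a standard stabilization argument to contract the slices. As a sanity check on the final answer, both sides are connected (the statement is about a fiber over the base-point and $\chi$ is surjective on $\pi_0$), so once the bar-construction comparison is in place, no separate $\pi_0$ argument is needed.
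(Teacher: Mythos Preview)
Your approach differs substantially from the paper's. The paper argues via group theory and the plus construction: it sets $\on{St}^{der}(A) = \Omega\sX(A)$, uses Suslin's homology-vanishing argument to show that $\on{St}^{der}(A)$ is an \emph{acyclic group} (i.e.\ $\Sigma^{\infty}\bB\on{St}^{der}(A) = 0$), and then invokes the recognition principle that an acyclic group mapping onto a perfect group $G$, with conjugation acting trivially on $\pi_*$ of the kernel, realizes the universal central extension of $G$. Taking $G = \Ker(GL_{\infty}(A) \to K_1(A))$ and identifying $(\bB G)^+ \simeq \Omega^{\infty}\tau^{\leq -2}K(A)$ via Quillen's plus construction then produces the fiber sequence $\sX(A) \to \bB G \to \Omega^{\infty}\tau^{\leq -2}K(A)$, from which the theorem follows by stacking Cartesian squares.

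Your proposal instead aims for a direct geometric comparison: model $\on{fib}(\chi)$ as a homotopy colimit over simplices of $wS_\bullet\Proj(A)$ and exhibit the subdiagram of ``standard'' filtrations (whose automorphism groups are the $U_I(A)$) as cofinal. The difficulty is that the hard content has been relocated, not removed. The contractibility of the relevant slice categories is exactly what encodes the nontrivial cancellation in $K$-theory, and the stabilization you gesture at (adding copies of $A$) does not obviously yield it: stabilization shows every simplex becomes \emph{isomorphic} to a standard one, but higher-categorical cofinality requires contracting the space of all such comparisons, which is a much stronger statement. In the classical literature Suslin's lemma is used to prove \emph{acyclicity} of $\sX(A)$, not cofinality of an indexing diagram, and these are not interchangeable inputs here. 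So as written the proposal has a genuine gap at precisely the step you flag as ``the hardest part''; the paper's acyclicity/plus-construction route is the argument that actually closes it.
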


\begin{rem}

The reader willing to take this result on faith may
safely skip ahead to \S \ref{ss:post-volodin}. 

\end{rem}

\begin{proof}[Proof of Theorem \ref{t:volodin} (sketch)]

This result is shown in \cite{fov} Proposition 2.6.
We give some indications of the ideas that go into it here.

Roughly, the construction works as follows. 
Note that $\sX(A)$ is connected (since $\sJ$ is contractible, admitting
an initial object). Let $\on{St}^{der}(A) \coloneqq \Omega \sX(A)$
be the \emph{derived Steinberg group} of $A$, so 
$\sX(A) = \colim_{(I,\alpha) \in \sJ} U_I(A)$, the colimit
being taken in the category $\mathsf{Gp}$ of group-like
$\sE_1$-groupoids. (It is straightforward to see from this description
that $\pi_0(\on{St}^{der}(A))$ is the usual Steinberg group
of $\pi_0(A)$, which is the reason we use this terminology.)

In essence, the argument relating $\on{St}^{der}(A)$ and $K(A)$ 
imitates the classical argument relating the Steinberg group and $K_2$
of usual ring.

\step First, suppose $G \in \mathsf{Gp}$ is a group (in the 
homotopy-theoretic sense). 

By definition, a \emph{central extension} of $G$ by 
an $\sE_2$-group\footnote{Meaning a group-like $\sE_2$-groupoid, i.e.,
a double loop space.} $A$ is the data of a pointed map
$\bB G \to \bB^2 A$. We frequently let $E$ denote the fiber
of the map $G \to \bB A$, which clearly fits into a fiber sequence
of groups:

\[
A \to E \to G.
\]

\noindent We sometimes say $A \to E \to G$ is a central extension
to mean it arises by this procedure.

\step We say $G$ is \emph{perfect} if
$\pi_0(G)$ is perfect in the usual sense, i.e.,
its (non-derived) abelianization is trivial.
We claim that perfect $G$ admits a \emph{universal central extension},
i.e., an initial central extension.

Indeed, in this case Quillen's plus construction\footnote{See \cite{hoyois}
for a discussion in the higher categorical setting.}
implies that there is simply-connected $Y \in \Gpd$ with a map from $\bB G$ 
realizing $Y$ as the initial simply-connected
space with a map from $\bB G$ (indeed: $Y = (\bB G)^+$).
Clearly $Y$ is initial for
pointed maps as well. Then looping implies $G$ has a universal
central extension by $\Omega^2 Y$.

We denote the universal central extension by $G^{univ} \to G$
in this case.

\step Next, we recall that there is a simple recognition principle
for a central extension $A \to E \to G$ to be the universal one.

Namely, this is the case if and only if $E$ is an \emph{acyclic group},
meaning that $\Sigma^{\infty}(\bB E) = 0 \in \Sp$
(or equivalently, the group homology of $E$ is trivial).

Indeed, this is a slight 
rephrasing of a well-known feature of Quillen's plus construction
(see \cite{dgm} Theorem 3.1.1.7 for example).

\step Now suppose $G$ is perfect and we are given a central extension:

\[
A \to H \to G
\]

\noindent with $H$ perfect as well. Then we claim the natural
map $H^{univ} \to G^{univ}$ is an isomorphism.

Indeed, let $A_G^{univ}$ denote $\Ker(G^{univ} \to G)$.
By universality, we have a canonical (pointed) map 
$\bB^2 A_G^{univ} \to \bB^2 A$. Let $F$ denote its
fiber. Note that $F$ receives a canonical map from $\bB H$
with fiber $\bB G^{univ}$. 

We claim $F$ is simply-connected. Indeed, its $\pi_1$ is
a quotient of $\pi_0(A)$, so abelian, but also a quotient
of the perfect group $\pi_0(H)$, so trivial.

Therefore, $F$ defines a central extension:

\[
\Omega^2 F \to G^{univ} \to H
\]

\noindent which is the universal one by the recognition principle:
$G^{univ}$ is an acyclic group, being the universal central
extension for $G$.

\step We now weaken the recognition principle given above
(c.f. \cite{dgm} Lemma 3.1.1.13).

Suppose we are given $f:E \to G$ surjective on $\pi_0$
with $G$ perfect and $E$ an acyclic group,
but without assuming centrality of the extension.
Observe that acyclicity of $E$ implies $E$ is perfect with
$E^{univ} \isom E$. Therefore, we obtain a map
$E^{univ} \to G^{univ}$, and we can ask when this map is an isomorphism.

We claim that this is the case if we assume that 
for every $g \in E$, the induced (conjugation)
automorphism of $\Ker(f)$ is trivial on $\pi_*(\Ker(f))$.

Indeed, we have the fiber sequence:

\[
\bB \Ker(f) \to \bB E \to \bB G
\]

\noindent with $\pi_1(\bB E) = \pi_0(E)$ acting trivially on
$\pi_*(\bB \Ker(f)) = \pi_{*+1}(\Ker(f))$. A well-known obstruction theoretic argument
then shows that for every $n \geq 0$, the map
$\tau_{\leq n} E \to \tau_{\leq n} G$ factors as a composition of
central extensions:

\[
\tau_{\leq n} E = H_0 \to H_1 \to \ldots \to H_r \to H_{r+1} = \tau_{\leq n} G
\in \mathsf{Gp}.
\]

\noindent The maps $H_i \to H_{i+1}$ are surjective on $\pi_0$ (being
extensions), so each group $H_i$ is perfect. Therefore, the previous
step implies that each such map induces an isomorphism on
universal central extensions. We then obtain
$(\tau_{\leq n} E)^{univ} \isom (\tau_{\leq n} G)^{univ}$.

Finally, it is clear that 
$G^{univ} = \lim_n (\tau_{\leq n} G)^{univ}$ (and similarly for $E$),
giving the claim.

\step We now apply these methods to study $\on{St}^{der}(A)$, leaving
details to references.

First, an argument of Suslin \cite{suslin} shows that
$\on{St}^{der}(A)$ is an acyclic group. 
More precisely, he shows that for any $(I,\alpha) \in \sJ$ 
and $r \geq 0$,
there is a map $(I,\alpha) \to (J,\beta) \in \sJ$
such that $\bB U_I(A) \to \bB U_J(A)$ is zero 
on homology groups $H_{\leq r}$ with coefficients in some field. 
This immediately implies the vanishing of such homology groups
for $\sX(A) = \bB \on{St}^{der}(A)$, which implies the vanishing
for integral coefficients, which gives our desired acyclicity. 

(We remark that although Suslin formally treats classical algebras,
his argument readily adapts to the connective $\sE_1$-setting.)  

\step Next, we would like to realize $\on{St}^{der}(A)$ as 
the universal central extension of something.

Let $GL_{\infty}(A) \coloneqq \colim_{n } GL_n(A) = 
\Omega(\colim_n \Proj(A))$.
Although $GL_{\infty}(A)$ is not perfect, it is not so far:
by Whitehead's lemma, $\pi_0(GL_{\infty}(A))$ has perfect derived
group an (non-derived) abelianization $K_1(A) (= K_1(\pi_0(A)))$.
Therefore, the kernel of the composition:
 
\[
GL_{\infty}(A) \to \Omega^{\infty+1} K(A) 
\xar{\Omega\chi} \pi_0(\Omega^{\infty+1} K(A)) =
K_1(A)
\]

\noindent is perfect. Let us denote\footnote{More standard notation
would be $E(A)$, but we have been using $E$ above for extensions.} 
this kernel by $G$.

There is a natural map $f:\on{St}^{der}(A) \to G$ induced
by our map from $\sX(A)$ to the fiber of $\chi$. We claim that
this realizes $\on{St}^{der}(A)$ as the universal central extension
of $G$.

Indeed, by acyclicity of $\on{St}^{der}(A)$, it suffices to show
that any $g \in \on{St}^{der}(A)$ acts trivially on $\pi_*(\Ker(f))$. 
This is a variant of the classical argument
that the usual Steinberg group is a central extension
\cite{milnor} Theorem 5.1, which we outline below
(see also \cite{fov} \S 4 and \cite{dgm} Lemma 3.1.3.4).

Let $\sJ_{\leq n} \subset \sJ$ be the full subcategory of
pairs $(I,\alpha)$ with $|I|\leq n$.
Let $\on{St}_n^{der}(A)$ be 
$\colim_{\sJ_{\leq n}} \on{St}_n^{der} \in \mathsf{Gp}$.

Note that $\on{St}_n^{der}(A)$ has a canonical map to $GL_n(A)$,
so acts on the spectrum $A^{\oplus n}$. 
Moreover, the map $\on{St}_n^{der}(A) \to \on{St}_{n+1}(A)$
clearly factors through $\on{St}_n^{der}(A) \rhd \Omega^{\infty} A^{\oplus n}$.

Let $K_n^{\prime}$ 
denote the kernel of the map $\on{St}_n^{der}(A) \to GL_n(A)$,
and let $K_{n+\frac{1}{2}}^{\prime}$ denote 
$\Ker(\on{St}_n^{der}(A) \rhd \Omega^{\infty} A^{\oplus n} \to GL_{n+1}(A))$.
Clearly the natural map $K_n^{\prime} \to K_{n+1}^{\prime}$ factors
through $K_{n+\frac{1}{2}}^{\prime}$.

It is immediate to see that 
for $x \in \Omega^{\infty} A^{\oplus n}$, the natural conjugation action
on $K_{n+\frac{1}{2}}^{\prime}$ fixes the image of
$K_n^{\prime}$, i.e., if we compose 
$\vph:K_n^{\prime} \to K_{n+\frac{1}{2}}^{\prime}$ with this conjugation
map, we obtain a map canonically homotopic to $\vph$.
This implies that the image of $x$ in $\pi_0(\on{St}^{der}(A))$ acts
by the identity on the image of $\pi_*(K_n^{\prime})$ in  
$\colim_m \pi_*(K_m^{\prime}) = 
\pi_*(\Ker(\on{St}^{der}(A) \to GL_{\infty}(A)))$.

The same arguments apply if we replace e.g.
$K_n^{\prime}$ by $K_n \coloneqq \Ker(\on{St}_n(A) \to GL_n(A) \times_{GL_{\infty}(A)} G)$. Then $\colim_n K_n$ is the
kernel of the map $\on{St}(A) \to GL_{\infty}(A)$
(by filteredness of the colimit here). 

Then observe that $\pi_0(\on{St}^{der}(A))$ is the usual Steinberg group
of $\pi_0(A)$, and the above shows that in the standard notation,
$x_{ij}(\alpha)$ acts trivially on the image of $\pi_*(K_n)$ in
$\pi_*(\on{St}^{der}(A))$ for $i<j$ and $j>n$. 
A similar argument shows the claim for $x_{ji}(\alpha)$ for
such $i$ and $j$. Such
elements then generate the (usual) Steinberg group, completing the
argument.

\step 

Let $G$ be as above. By Quillen's plus construction of $K$-theory,
the plus construction of $\bB G$ is 
$\Omega^{\infty} \tau^{\leq -2} K(A)$. 

Recall that the classifying space of the universal central
extension of $\bB G$ is the fiber of the canonical map
to the plus construction. Therefore, we have a diagram:

\[
\xymatrix{
\sX(A) = \bB \on{St}^{der}(A)  \ar[r] & \bB G \ar[r] \ar[d] & 
\Omega^{\infty} \tau^{\leq -2} K(A) \ar[d] \\
 & \bB GL_{\infty}(A) \ar[r] \ar[d] & 
\Omega^{\infty} \tau^{\leq -1} K(A) \ar[d] \\
& \underset{n}{\colim} \Proj(A) \ar[r] & 
\Omega^{\infty} K(A) 
}
\]

\noindent where the squares are all Cartesian and the top row
is a fiber sequence, implying the theorem.
(In particular, the universal central extension $\on{St}^{der}(A)$ 
of $G$ is by $\Omega^{\infty+2} K(A)$, in analogy with the classical
definition of $K_2$.)

\end{proof}

\subsection{Consequences}\label{ss:post-volodin}

We now deduce the following.

\begin{cor}

The functor:

\[
\begin{gathered}
\Alg_{conn} \to \Gpd \\
A \mapsto \on{fib}(\chi_A)
\end{gathered}
\]

\noindent commutes with sifted colimits.

\end{cor}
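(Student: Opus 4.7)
My plan is to apply Theorem \ref{t:volodin}, which provides a natural equivalence $\on{fib}(\chi_A) \simeq \sX(A) = \colim_{(I,\alpha) \in \sJ} \bB U_I(A)$ of functors of $A \in \Alg_{conn}$. Since colimits commute with colimits, it suffices to show that for each fixed $(I,\alpha) \in \sJ$, the functor $A \mapsto \bB U_I(A):\Alg_{conn} \to \Gpd$ commutes with sifted colimits.

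I would then compute $\bB U_I(A)$ as the geometric realization of the bar construction, i.e., of the simplicial groupoid $[n] \mapsto U_I(A)^n$. Since $\Delta^{op}$ is sifted, the geometric realization functor $|\cdot|:\Gpd^{\Delta^{op}} \to \Gpd$ commutes with sifted colimits (Fubini). Hence it is enough to show that for each $n \geq 0$, the functor $A \mapsto U_I(A)^n$ commutes with sifted colimits; and since finite products commute with sifted colimits in $\Gpd$ (essentially the definition of sifted), this further reduces to showing that $A \mapsto U_I(A)$ commutes with sifted colimits as a functor to $\Gpd$, forgetting the group structure.

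The essential content is the identification of the underlying groupoid of $U_I(A)$ with the finite product $\prod_{i < j \in I} \Omega^{\infty} A$. Consider the standard filtration $F_0 \subset F_1 \subset \ldots \subset F_{|I|} = A^{\oplus I}$ given by $F_k = A^{\oplus \{1,\ldots,k\}}$. A filtered endomorphism of $A^{\oplus I}$ inducing the identity on each associated graded piece is automatically an automorphism, and such data may be specified inductively: given such a $\phi_k$ on $F_k$, the space of extensions to such a $\phi_{k+1}$ on $F_{k+1}$ is a torsor over $\ul{\Hom}_{A\mod}(A, F_k) \simeq \Omega^{\infty}(A^{\oplus k})$, canonically trivialized by the direct sum splitting $F_{k+1} = F_k \oplus A$. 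Assembling these trivializations yields the desired product identification. Since $\Omega^{\infty}: \Sp^{\leq 0} \to \Gpd$ commutes with sifted colimits (as exploited in the proof of Theorem \ref{t:k} \eqref{i:sifted}), the same holds for $A \mapsto U_I(A)$, completing the argument.

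The main obstacle is the bookkeeping in the third paragraph: the torsor trivializations at each stage of the filtration must be assembled in a homotopically coherent fashion, naturally in $A$. Once this identification is in place, the remaining reductions are formal manipulations of sifted colimits.
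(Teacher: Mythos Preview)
Your proposal is correct and follows essentially the same route as the paper: reduce via Theorem \ref{t:volodin} to $\sX(A)$, pass the sifted colimit through the $\sJ$-indexed colimit and the bar construction to reduce to $A \mapsto U_I(A)$, and then identify the underlying space of $U_I(A)$ with a finite product of copies of $\Omega^{\infty} A$. One minor quibble: the siftedness of $\bDelta^{op}$ is not what makes geometric realization commute with sifted colimits---that is just Fubini for iterated colimits---but this does not affect the argument.
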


\begin{proof}

By Theorem \ref{t:volodin}, it suffices to show this
for $A \mapsto \sX(A)$ instead. We are
reduced to showing $A \mapsto \bB U_n(A)$ commutes with
sifted colimits for every $n$,
and then to $A \mapsto U_n(A) \in \Gpd$. But
$U_n(A)$ is functorially isomorphic to $A^{\oplus {{n-1}\choose{2}}}$,
which clearly commutes with sifted colimits in $A$.

\end{proof}

\subsection{}

We now show $K$-theory infinitesimally commutes with sifted colimits.

For convenience, we use some notation from the proof of Theorem \ref{t:volodin}
and let $\on{St}^{der}(A)= \Omega \sX(A) \in \mathsf{Gp}$. 
By Theorem \ref{t:volodin},
for any $A \in \Alg_{conn}$ we have a canonical homomorphism 
$\on{St}^{der}(A) \to GL_{\infty}(A)$ with:

\[
GL_{\infty}(A)/\on{St}^{der}(A) = \Omega^{\infty+1} K(A)
\]

\noindent (where the quotient on the left is the usual geometric realization
of the bar construction).

Similarly, we have:

\begin{lem}

Let $B \to A$ is a square-zero extension.

As in \S \ref{ss:rel-notation}, we let
$\on{St}^{der}(B/A) \coloneqq \Ker(\on{St}^{der}(B) \to \on{St}^{der}(A))$,
and similarly for $K(B/A)$ and $GL_{\infty}(B/A)$. 

Then:

\[
GL_{\infty}(B/A)/\on{St}^{der}(B/A) = \Omega^{\infty+1} K(B/A).
\]

\end{lem}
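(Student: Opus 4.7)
The plan is to deduce the relative identification from its absolute counterpart, stated in the preceding paragraph, by applying that statement functorially in the algebra and taking fibers of the resulting morphism of sequences.

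Concretely, Theorem \ref{t:volodin} produces, naturally in any $C \in \Alg_{conn}$, a fiber sequence of pointed groupoids
\[
\sX(C) \to \underset{n}{\colim} \, \Proj(C)^{\simeq} \xar{\chi_C} \Omega^{\infty} K(C).
\]
Looping at the natural basepoint, and interchanging $\Omega$ with the sequential colimit (legitimate because $S^1$ is compact, and the structure maps $-\oplus C:\Proj(C)^{\simeq} \to \Proj(C)^{\simeq}$ are well-behaved), one obtains the fiber sequence
\[
\on{St}^{der}(C) \to GL_{\infty}(C) \to \Omega^{\infty+1} K(C)
\]
in $\mathsf{Gp}$, naturally in $C$. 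Applying this to the map $B \to A$ gives a morphism between the corresponding fiber sequences, and taking column-wise fibers (which preserves fiber sequences) yields the desired fiber sequence
\[
\on{St}^{der}(B/A) \to GL_{\infty}(B/A) \to \Omega^{\infty+1} K(B/A),
\]
in the notation of \S \ref{ss:rel-notation}.

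To promote this fiber sequence to the claimed quotient identification, I verify that the right-hand map is surjective on $\pi_0$. This is a short diagram chase from the surjectivity of $\pi_0 GL_{\infty}(C) \onto K_1(\pi_0 C)$ for $C = A, B$ (a classical fact, using that square-zero extensions yield $\pi_0$-surjections with nilpotent kernel): these provide $\pi_0$-surjective horizontal maps in the square defining the fibers, from which the induced map between fibers is $\pi_0$-surjective too, by a straightforward chase. Once this is known, a fiber sequence of group-like $\sE_1$-groupoids whose right-hand map is $\pi_0$-surjective realizes its target as the quotient of its middle term by its left term, giving the lemma.

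The only mildly delicate point is the interchange of $\Omega$ with the sequential colimit in the first step; beyond this, everything is standard manipulation with fiber sequences of group-like $\sE_1$-groupoids, and there is no substantive obstacle.
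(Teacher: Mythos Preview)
Your fiber sequence
\[
\on{St}^{der}(B/A) \to GL_{\infty}(B/A) \to \Omega^{\infty+1} K(B/A)
\]
is correctly obtained: limits commute with limits, so taking vertical fibers of a map of fiber sequences again yields a fiber sequence, and $\Omega^{\infty+1}$ preserves fibers. The interchange of $\Omega$ with the sequential colimit is not an issue.

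The gap is in your justification that $GL_{\infty}(B/A) \to \Omega^{\infty+1} K(B/A)$ is $\pi_0$-surjective. The principle you invoke---that $\pi_0$-surjectivity of the two horizontal maps in a commutative square forces $\pi_0$-surjectivity of the induced map on vertical fibers---is false in general. For a counterexample in $\mathsf{Gp}$, take the square with top row $0 \to 0$, bottom row $0 \to K_A$ for $K_A$ any connected group with $\pi_1 K_A \neq 0$: both horizontals are trivially $\pi_0$-surjective, the left vertical is too, yet the map on vertical fibers is $0 \to \Omega K_A$, which is not $\pi_0$-surjective. In your situation the obstruction sits in the image of $K_2(A) = \pi_1\Omega^{\infty+1}K(A)$ inside $\pi_0\Omega^{\infty+1}K(B/A)$, and nothing in your chase touches it.

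The paper's argument avoids this problem by working one level up, with classifying spaces. One forms the $3\times 3$ diagram whose bottom two rows are the fiber sequences $\Omega^{\infty+1}K(-) \to \bB\on{St}^{der}(-) \to \bB GL_{\infty}(-)$ for $-=B,A$. The relevant input is then $\pi_0$-surjectivity of the \emph{vertical} maps $\on{St}^{der}(B) \to \on{St}^{der}(A)$ and $GL_{\infty}(B) \to GL_{\infty}(A)$, which is immediate from the square-zero hypothesis; this is exactly what makes the right two columns of the $\bB$-diagram into fiber sequences. Commuting limits then identifies the top-left corner $GL_{\infty}(B/A)/\on{St}^{der}(B/A) = \on{fib}\big(\bB\on{St}^{der}(B/A) \to \bB GL_{\infty}(B/A)\big)$ with $\on{fib}\big(\Omega^{\infty+1}K(B)\to\Omega^{\infty+1}K(A)\big) = \Omega^{\infty+1}K(B/A)$. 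Note that in the counterexample above, the analogue of $\pi_0\on{St}^{der}(B)\to\pi_0\on{St}^{der}(A)$ is $0 \to \pi_1 K_A$, which fails to be surjective---so the paper's hypothesis is precisely what rules out such examples. Your argument can be repaired by passing to $\bB$ in this way, but as written the $\pi_0$-surjectivity step does not go through.
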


\begin{proof}

We have the commutative diagram:

\[
\xymatrix{
GL_{\infty}(B/A)/\on{St}^{der}(B/A) \ar[r] \ar[d] &
\bB \on{St}^{der}(B/A) \ar[r] \ar[d] &
\bB GL_{\infty}(B/A) \ar[d] \\ 
\Omega^{\infty+1} K(B) \ar[r] \ar[d] &
\bB \on{St}^{der}(B) \ar[r] \ar[d] &
\bB GL_{\infty}(B) \ar[d] \\ 
\Omega^{\infty+1} K(A) \ar[r] &
\bB \on{St}^{der}(A) \ar[r]  &
\bB GL_{\infty}(A)
}
\]

\noindent with rows being fiber sequences. Moreover, because
$\pi_0(\on{St}^{der}(B)) \onto \pi_0(\on{St}^{der}(A))$
and $\pi_0(GL_{\infty}(B)) \onto \pi_0(GL_{\infty}(A))$
(by the square-zero condition), the right two columns are also fiber sequences.
This gives the claim.

\end{proof}

Now observe that the functor:

\[
\begin{gathered}
\Alg_{conn}^{\SqZero} \to \Gpd \\
(B \to A) \mapsto GL_{\infty}(B/A) 
\end{gathered}
\]

\noindent commutes with sifted colimits. 
Indeed, the right hand 
side is isomorphic to infinite matrices with coefficients
in $\Omega^{\infty} \Ker(B \to A)$, so is a direct sum of infinitely many copies
of this functor.
Therefore, the claim follows from noting that 
$(B \to A) \mapsto \Omega^{\infty} \Ker(B \to A)$ commutes with sifted
colimits.

Mapping to $\on{St}^{der}(B/A)$ obviously commutes with sifted
colimits, so we obtain $(B\to A) \mapsto \Omega^{\infty+1}K(B/A)$
infinitesimally commutes with sifted colimits. Then the claim follows
because $\pi_0(\Omega^{\infty}K(B/A)) = 0$ and 
$\Omega^{\infty+1}: \Sp^{\leq -1} \to \Gpd$ commutes with sifted colimits.

\subsection{$\TC$}

We now show that $\TC$ is convergent and infinitesimally commutes
with sifted colimits.

\begin{notation}

In what follows, for a spectrum $V$ we let $V_{\bQ}$ denote $V \otimes \bQ$.

\end{notation}

\subsection{}

The following result, which
also serves as a key consistency check with 
Goodwillie's original work on the subject \cite{goodwillie},
is crucial.

\begin{thm}\label{t:rat'l}

For $B \to A \in \Alg_{conn}$ a square-zero extension, the natural map:

\[
\TC(B/A)_{\bQ} \to \TC(B_{\bQ}/A_{\bQ})
\]

\noindent is an isomorphism.

\end{thm}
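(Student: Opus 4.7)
The plan is to reduce to the case of split square-zero extensions, and then to invoke Theorem \ref{t:tc-calc} together with the rational vanishing of Tate constructions on bounded-below spectra. For the reduction, present $A = |F_\bullet|$ as a geometric realization of free connective $\sE_1$-algebras, form $B_\bullet = F_\bullet \times_A B$, and observe (as in the proof of Proposition \ref{p:inf}) that each $B_n \to F_n$ is automatically split square-zero. Rationalization commutes with finite limits and with geometric realizations, so $(B \to A)_{\bQ} = |(B_\bullet \to F_\bullet)_{\bQ}|$ and each $(B_n \to F_n)_{\bQ}$ remains split square-zero. Granted infinitesimal commutation of $\TC(-/-)$ with sifted colimits---the $\TC$-portion of Theorem \ref{t:conv/inf}, which in the split square-zero case is direct from Theorem \ref{t:tc-calc} and the sifted-colimit-preservation of $\THH$---the problem reduces to a split square-zero extension $B = \SqZero(A, M)$ with $M \in A\bimod^{\leq 0}$.

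For this case, apply Theorem \ref{t:tc-calc} with $\sC = A\mod$ and $T = T_M$, noting $\tr_{A\mod}(T_M^n) = \THH(A, M^{\otimes_A n}[n]) \in \Sp^{\leq -n}$. This expresses $\TC_{red}(\SqZero(A, M)/A)$ as $\lim_n \tr_{A\mod}(T_M^n)^{\bZ/n}$ with associated gradeds $\THH(A, M^{\otimes_A n}[n])_{h\bZ/n}$, and identically with $A_{\bQ}$ in place of $A$. The key input is that $\THH$ commutes with rationalization $\bZ/n$-equivariantly: since $H\bQ \otimes_{\bS} H\bQ \isom H\bQ$, the cyclic bar description of $\THH$ yields $\THH(A, N)_{\bQ} \isom \THH(A_{\bQ}, N_{\bQ})$ $\bZ/n$-equivariantly for $N = M^{\otimes_A n}[n]$.

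To upgrade this to an equality of genuine fixed points after rationalization, use that for a bounded-below spectrum $V$ with a finite group action $G$, the norm $V_{hG} \to V^{hG}$ is a rational equivalence, so $V^{tG}_{\bQ} = 0$ (the standard rational vanishing of Tate, compatible with \cite{nikolaus-scholze} Lemma II.4.1). Hence in the iterated-fiber-product definition of $\tr_{A\mod}(T_M^n)^{\bZ/n}$ from \S \ref{ss:genuine}, every Tate term vanishes rationally, so $\tr_{A\mod}(T_M^n)^{\bZ/n}_{\bQ} \isom \tr_{A\mod}(T_M^n)^{h\bZ/n}_{\bQ} \isom (\tr_{A\mod}(T_M^n)_{\bQ})^{h\bZ/n}$, and the latter matches the analogous expression for $A_{\bQ}$ by the previous paragraph. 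The same rational collapse of the iterated fiber product applies uniformly in $n$, upgrading termwise agreement to a rational isomorphism of the full limit $\TC_{red}(B/A)_{\bQ} \isom \TC_{red}(B_{\bQ}/A_{\bQ})$.

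The main obstacle is ensuring that rationalization commutes with the infinite limit $\lim_n \tr_{A\mod}(T_M^n)^{\bZ/n}$ defining $\TC_{red}$, since rationalization generally respects only finite limits. This is handled by the increasing coconnectivity $\tr_{A\mod}(T_M^n) \in \Sp^{\leq -n}$, which forces the tower to be eventually constant in any given degree range, making it compatible with rationalization and with the passage through $(-)^{h\bZ/n}$. A secondary concern is the apparent circularity with Theorem \ref{t:conv/inf}; this is dissolved by first establishing infinitesimal commutation of $\TC$ with sifted colimits for split square-zero extensions directly from Theorem \ref{t:tc-calc} (where all ingredients manifestly preserve sifted colimits in the module variable), and then using this to run the resolution argument for the general square-zero case.
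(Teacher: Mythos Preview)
Your argument for the split case is essentially correct: the rational vanishing of $(-)^{tG}$ for finite $G$ on bounded-below spectra collapses the iterated fiber product defining genuine fixed points, and the increasing coconnectivity of $\tr_{\sC}(T_M^n)$ lets rationalization pass through the limit. That part is fine.

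The gap is in the reduction step, and your proposed dissolution of the circularity does not work. To pass from general $B \to A$ to the split terms $B_n \to F_n$ you need
\[
\TC(B/A) \;\simeq\; |\TC(B_\bullet/F_\bullet)|,
\]
i.e.\ that $\TC(-/-)$ commutes with this particular geometric realization in $\Alg_{conn}^{\SqZero}$. Knowing that $\TC(\SqZero(A,-)/A)$ preserves sifted colimits \emph{in the module variable} does not give this: the simplicial object $(B_\bullet \to F_\bullet)$ lives in the subcategory of split extensions, but its colimit $(B \to A)$ does not, so commutation with colimits \emph{within} the split subcategory says nothing about $\TC(B/A)$. In the paper the required infinitesimal commutation of $\TC$ with sifted colimits is proved \emph{after} Theorem~\ref{t:rat'l}, and its rational half genuinely uses Theorem~\ref{t:rat'l} (see the proof that $\TC$ infinitesimally commutes with sifted colimits, which invokes $\TC(B/A)_{\bQ} = \TC(B_{\bQ}/A_{\bQ})$ to identify the rational relative $\TC$ with $\THH(B_{\bQ}/A_{\bQ})_{h\bB\bZ}[1]$). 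The mod-$p$ statement from \cite{cmm} is available to you, but that alone does not control the rational part of the comparison map, so the circularity is real.

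The paper avoids this by working directly with the general $B \to A$: it equips $\THH(B/A)$ with the structure of a connectively filtered cyclotomic spectrum (via the two-step filtration on $B$ with $\gr_\bullet B = \SqZero(A,M)$), introduces the meromorphic Frobenius $\vph^{mer}$ on $\TP(V)_{\bQ}$, and shows that the filtration forces $\id - \vph^{mer}$ to be invertible (Proposition~\ref{p:fil-inv} and Lemma~\ref{l:sq-zero-tp}), which by Lemma~\ref{l:mer-inv} gives the conclusion. Your split-case computation is in effect the associated-graded input to that argument, but the passage to the filtered object is what replaces the unavailable resolution step.
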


We will prove this result in what follows.

\subsection{Notation}

Let $V$ be a cyclotomic spectrum. 

Let $\TC^-(V)$ denote $V^{h\bB \bZ}$.
Similarly, let $\TP(V) = V^{t\bB \bZ}$ be the
\emph{periodic topological cyclic homology} of $V$,
where this notation indicates the Tate
construction on the circle\footnote{We remind the reader of our
notational convention from Remark \ref{r:bb-bz}.}
(see e.g. \cite{nikolaus-scholze} \S I.4). 

Let $\TP^{\wedge}(V)$ denote the profinite completion
of $\TP(V)$, that is:

\[
\TP^{\wedge}(V) \coloneqq \Coker\big(\ul{\Hom}_{\Sp}(\bQ,\TP(V)) \to \TP(V)\big).
\]

Recall that we have:

\begin{equation}\label{eq:tc-tp}
\TC(V) = \on{Eq}(\TC^-(V) \underset{\vph}{\overset{\on{can}}{\rightrightarrows}} \TP^{\wedge}(V))
\end{equation}

\noindent by \cite{nikolaus-scholze} II.4.3.
Here $\on{can}$ is the tautological map (lifting
to $\TP$ itself), and $\vph$ is the \emph{cyclotomic Frobenius} map
(which amalgamates Frobenius maps at all primes).

\subsection{Construction of the meromorphic Frobenius}

We have the following observation.

\begin{lem}\label{l:tp-split}

Let $V$ be a spectrum with a (naive) $\bB\bZ$-action.
Suppose $(V_{\bQ})^{t\bB\bZ} = 0$. Then there is a canonical
isomorphism:

\[
(V^{h\bB \bZ})_{\bQ} \isom (V^{t\bB\bZ})_{\bQ} \times (V_{\bQ})^{h\bB \bZ}.
\]

\end{lem}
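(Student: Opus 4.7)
The plan is to deduce the splitting from the standard norm fiber sequence $V_{h\bB\bZ} \to V^{h\bB\bZ} \to V^{t\bB\bZ}$, by comparing this sequence for $V$ to its analogue for $V_{\bQ}$ via the rationalization map.

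First, since rationalization is exact, applying $(-)_{\bQ}$ to the norm fiber sequence for $V$ yields a fiber sequence $(V_{h\bB\bZ})_{\bQ} \xrightarrow{i} (V^{h\bB\bZ})_{\bQ} \to (V^{t\bB\bZ})_{\bQ}$. The rationalization map $V \to V_{\bQ}$ induces natural maps $f: (V_{h\bB\bZ})_{\bQ} \to (V_{\bQ})_{h\bB\bZ}$, $g: (V^{h\bB\bZ})_{\bQ} \to (V_{\bQ})^{h\bB\bZ}$, $h: (V^{t\bB\bZ})_{\bQ} \to (V_{\bQ})^{t\bB\bZ}$ assembling into a morphism of fiber sequences; in particular $g \circ i = i' \circ f$, where $i': (V_{\bQ})_{h\bB\bZ} \to (V_{\bQ})^{h\bB\bZ}$ denotes the norm map for $V_{\bQ}$.

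Next, I would observe two key equivalences. The map $f$ is an equivalence because $(-)_{h\bB\bZ}$ is a colimit and thus commutes with rationalization. Under the hypothesis $(V_{\bQ})^{t\bB\bZ} = 0$, the norm fiber sequence for $V_{\bQ}$ forces $i'$ to be an equivalence.

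Using these, I would define $r \coloneqq f^{-1} \circ (i')^{-1} \circ g : (V^{h\bB\bZ})_{\bQ} \to (V_{h\bB\bZ})_{\bQ}$. The identity $g \circ i = i' \circ f$ gives $r \circ i = \id$, exhibiting $r$ as a retraction of $i$. In the stable category $\Sp$, a fiber sequence with a retracted fiber-inclusion splits canonically, yielding
\[
(V^{h\bB\bZ})_{\bQ} \simeq (V_{h\bB\bZ})_{\bQ} \times (V^{t\bB\bZ})_{\bQ}.
\]
Composing with the equivalence $(V_{h\bB\bZ})_{\bQ} \xrightarrow{i' \circ f} (V_{\bQ})^{h\bB\bZ}$ on the first factor gives the stated isomorphism. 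The argument is a formal diagram chase and presents no serious obstacle; the content of the lemma is that, although $(-)^{h\bB\bZ}$ does not commute with rationalization in general (because $\bB\bB\bZ \simeq \bC P^{\infty}$ is not a compact space), the failure of commutation is controlled by the Tate construction and vanishes rationally under the hypothesis.
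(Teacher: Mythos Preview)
Your argument is correct and follows essentially the same route as the paper: both set up the map from the rationalized norm fiber sequence for $V$ to the norm fiber sequence for $V_{\bQ}$, use that the left vertical arrow (on homotopy orbits) is an isomorphism, and then invoke the vanishing hypothesis. The only cosmetic difference is that the paper phrases the conclusion via the right square being Cartesian (so that $(V^{h\bB\bZ})_{\bQ}$ is the pullback over $(V_{\bQ})^{t\bB\bZ} = 0$), whereas you extract an explicit retraction; also note that in the paper's convention the norm sequence for $\bB\bZ$ reads $V_{h\bB\bZ}[1] \to V^{h\bB\bZ} \to V^{t\bB\bZ}$, though this shift is immaterial to your argument.
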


\begin{proof}

By definition of the Tate construction for $\bB \bZ$, we have
a commutative diagram with exact rows:

\[
\xymatrix{
(V_{h\bB \bZ})_{\bQ}[1] \ar[r] \ar[d] & (V^{h\bB \bZ})_{\bQ} \ar[r] \ar[d] &
(V^{t\bB \bZ})_{\bQ} \ar[d] \\
(V_{\bQ})_{h\bB \bZ}[1] \ar[r] & (V_{\bQ})^{h\bB\bZ} \ar[r] &
(V_{\bQ})^{t\bB \bZ}. 
}
\]

\noindent Clearly the left vertical map is an isomorphism,
so the right square is Cartesian. Now our vanishing hypothesis gives the result.

\end{proof}

We now make the following definition following \cite{hesselholt-zeta}.

\begin{defin}

A cyclotomic spectrum $V$ \emph{admits a meromorphic Frobenius}
if $\TP(V)$ is profinite\footnote{I.e., 
$\ul{\Hom}(\bQ,\TP(V)) = 0$, or equivalently, $\TP(V) \isom \TP^{\wedge}(V)$.}
and $\TP(V_{\bQ})$ is profinite.

\end{defin}

\begin{rem}

Note that $\TP(V_{\bQ})$ being profinite is equivalent to vanishing,
since a profinite and rational spectrum is necessarily zero.

\end{rem}

Suppose $V$ admits a meromorphic Frobenius.
We then have the canonical map:

\[
\vph^{mer}:\TP(V)_{\bQ} \to \TP(V)_{\bQ}
\]

\noindent given as the composition: 

\[
\TP(V)_{\bQ} 
\xar{\text{Lem. \ref{l:tp-split}}} \TC^-(V)_{\bQ}
\xar{\vph\otimes\bQ} \TP^{\wedge}(V)_{\bQ} = \TP(V)_{\bQ}.
\]

\begin{lem}\label{l:mer-inv}

Suppose $V$ is a cyclotomic spectrum that admits a meromorphic
Frobenius. Then $\TC(V)_{\bQ} \isom \TC(V_{\bQ})$
if and only if $\id-\vph^{mer}:\TP^{\wedge}(V)_{\bQ} \to 
\TP^{\wedge}(V)_{\bQ}$ is an isomorphism.

\end{lem}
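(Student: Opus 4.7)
The plan is to rewrite both sides using formula \eqref{eq:tc-tp} and compare them via the $3 \times 3$ lemma. Rationalization $-\otimes \bQ$ is exact (being a filtered colimit), so it commutes with finite limits. Applying this to $\TC(V) = \on{fib}(\on{can} - \vph: \TC^-(V) \to \TP^{\wedge}(V))$ yields $\TC(V)_{\bQ} = \on{fib}(\on{can} - \vph\otimes \bQ: \TC^-(V)_{\bQ} \to \TP^{\wedge}(V)_{\bQ})$. The meromorphic Frobenius hypothesis forces $\TP(V_{\bQ}) = 0$, hence $\TP^{\wedge}(V_{\bQ}) = 0$, so $\TC(V_{\bQ}) = \TC^-(V_{\bQ})$.

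Next I would apply the $3 \times 3$ lemma to the commutative square whose top row is $\on{can} - \vph: \TC^-(V)_{\bQ} \to \TP^{\wedge}(V)_{\bQ}$, whose bottom row is the zero map $\TC^-(V_{\bQ}) \to 0$, and whose vertical maps are the obvious ones induced by $V \to V_{\bQ}$. This identifies $\on{fib}\bigl(\TC(V)_{\bQ} \to \TC(V_{\bQ})\bigr)$ with the fiber of the induced map $K \to L$, where $K := \on{fib}(\TC^-(V)_{\bQ} \to \TC^-(V_{\bQ}))$ and $L := \TP^{\wedge}(V)_{\bQ}$. Lemma \ref{l:tp-split} identifies $K \simeq \TP(V)_{\bQ} = \TP^{\wedge}(V)_{\bQ}$ (using $\TP(V) = \TP^{\wedge}(V)$ since $\TP(V)$ is already profinite by hypothesis), and shows that under this identification $\on{can}$ corresponds to the canonical projection onto the $\TP(V)_{\bQ}$-summand; in particular, $\on{can}|_K = \id$.

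The crux is then to identify the induced map $K \to L$ as $\id - \vph^{mer}$. By the previous paragraph the $\on{can}$-contribution restricts to the identity, and the $\vph$-contribution restricts to $\vph^{mer}$ by the very definition of the meromorphic Frobenius (as the composition of the inclusion $\TP(V)_{\bQ} \hookrightarrow \TC^-(V)_{\bQ}$ coming from the splitting with $\vph \otimes \bQ$). This yields a fiber sequence
\[
\on{fib}\bigl(\TC(V)_{\bQ} \to \TC(V_{\bQ})\bigr) \to \TP^{\wedge}(V)_{\bQ} \xrightarrow{\id - \vph^{mer}} \TP^{\wedge}(V)_{\bQ}.
\]
A map of spectra is an isomorphism exactly when its fiber vanishes, and both implications of the lemma follow. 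The main obstacle is this last bookkeeping step: one must verify that the splitting of Lemma \ref{l:tp-split} genuinely intertwines $\on{can}$ with the projection and $\vph$ with $\vph^{mer}$ on the $\TP(V)_{\bQ}$-summand, rather than introducing sign ambiguities or summand-mixing from the Tate construction; everything else is formal.
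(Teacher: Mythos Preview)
Your argument is correct and is precisely what the paper's one-line proof is hiding: the paper simply asserts $\Ker(\id-\vph^{mer}) \isom \Ker\big(\TC(V)_{\bQ} \to \TC(V_{\bQ})\big)$ ``using Lemma \ref{l:tp-split}'', and your $3\times 3$ argument together with the identification of the induced map on vertical fibers is exactly the unpacking of that claim. The bookkeeping worry you flag is harmless: the proof of Lemma \ref{l:tp-split} exhibits the splitting via a Cartesian square whose top horizontal arrow \emph{is} the rationalized canonical map $\TC^-(V)_{\bQ}\to\TP(V)_{\bQ}$, so $\on{can}$ restricts to the projection by construction, and $\vph^{mer}$ is literally defined as the restriction of $\vph\otimes\bQ$ along the resulting section.
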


\begin{proof}

We clearly have:

\[
\Ker(\id-\vph^{mer}) \isom 
\Ker\big(\TC(V)_{\bQ} \to \TC(V_{\bQ})\big)
\]

\noindent using Lemma \ref{l:tp-split}.

\end{proof}

\begin{rem}

$\THH(\bF_p)$ admits a meromorphic
Frobenius with non-invertible $\id-\vph^{mer}$:
see \cite{nikolaus-scholze} \S IV.4.

\end{rem}

\subsection{Filtrations}\label{ss:cyc-filt}

We now give some sufficient hypotheses to test the invertibility
of $\id-\vph^{mer}$ in the above setting.

\begin{defin}

A \emph{connectively filtered cyclotomic spectrum} 
is a connective spectrum $V$ with a complete\footnote{I.e.,
$\lim_n \fil_{-n} V = 0$.}
filtration $\fil_{\dot} V$
by connective spectra, an action of $\bB \bZ$ as a filtered spectrum, and 
$\bB \bZ$-equivariant filtered \emph{Frobenius}
maps:

\[
\vph_p:\fil_{\dot} \to \fil_{p\cdot\dot} V^{t\bZ/p}
\]

\noindent for every prime $p$. (As is standard, we are considering the 
residual\footnote{The quotient notation is misleading here:
it refers to the existence of the fiber sequence
$\bZ/p \to \bB \bZ \xar{p} \bB \bZ$ (with the right
map of course surjective on $\pi_0$).}
$\bB \bZ = (\bB \bZ)/(\bZ/p)$-action on the right hand side.)

\end{defin}

\begin{rem}

In the above, note that $V$ is a cyclotomic
spectrum. Moreover, the notation indicates that 
$\vph_p$ maps $\fil_n V$ to $\fil_{pn} V^{t\bZ/p}$
for every $p$ and $n$. Similarly, $\gr_{\dot} V = \oplus_n \gr_n V$ 
is also a \emph{graded} cyclotomic spectrum, which (similarly to
the proof of Proposition \ref{p:thh}) means 
$\vph_p:\gr_n V \to \gr_{pn} V^{t\bZ/p}$.

\end{rem}

\begin{rem}

The non-connective version of the above notion may readily
be extracted from \cite{amr}.

\end{rem}

\begin{rem}

A (non-connective) version of this notion 
appeared in \cite{brun-filtered} using the
language of equivariant homotopy theory.

\end{rem}

\begin{example}\label{e:filt-cyc}

If $A$ is a filtered $\sE_1$-algebra with 
$\fil_n A$ connective for all $n$, 
then the standard filtration on
$\THH(A)$ upgrades to a structure of connectively filtered cyclotomic spectrum. 
Indeed, this is a ready adaptation of the Nikolaus-Scholze construction
(or follows from the functoriality of traces outlined
in \S \ref{ss:tate-diag}). We have $\gr_{\dot} \THH(A) = \THH(\gr_{\dot} A)$
as (graded) cyclotomic spectra.

\end{example}

\begin{prop}\label{p:fil-inv}

Let $V$ be a connectively filtered cyclotomic spectrum
such that:

\begin{itemize}

\item $\fil_{-1} V \isom \fil_0 V \isom \ldots \isom V$.

\item For every $n$,
$\fil_{-m} V \in \Sp^{\leq -n}$ for $m\gg 0$. 

\item The cyclotomic spectrum $\gr_{\dot} V$ admits a meromorphic Frobenius.

\end{itemize}

Then $\TP(V)$ admits a meromorphic Frobenius
with $\id-\vph^{mer}:\TP(V) \otimes \bQ \to \TP(V) \otimes \bQ$
an isomorphism.

\end{prop}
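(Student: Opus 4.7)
The strategy is to exploit the complete filtration on $V$ to reduce everything to statements about graded pieces, where the meromorphic Frobenius hypothesis on $\gr_{\dot}V$ applies directly.

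First, I would show that $V$ admits a meromorphic Frobenius. The filtration on $V$ induces complete filtrations on $\TC^{-}(V)$ and $\TP(V)$ with graded pieces $\TC^{-}(\gr_n V)$ and $\TP(\gr_n V)$ respectively; the strong coconnectivity of $\fil_{-m}V$ for large $m$ guarantees strong convergence (only finitely many filtration pieces contribute to each cohomological degree). Since profinite spectra form a subcategory of $\Sp$ stable under finite extensions and limits, and each $\TP(\gr_n V)$ is profinite by hypothesis on $\gr_{\dot}V$, it follows that $\TP(V)$ is profinite. Applying the same filtered reasoning to $V_{\bQ}$: its graded pieces $(\gr_n V)_{\bQ}$ have $\TP$ that is both profinite (by assumption) and rational, hence zero; this forces $\TP(V_{\bQ}) = 0$. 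Therefore $V$ admits a meromorphic Frobenius, and Lemma \ref{l:tp-split} provides the splitting $\TC^{-}(V)_{\bQ} \simeq \TP(V)_{\bQ} \oplus \TC^{-}(V_{\bQ})$ used to define $\vph^{mer}$.

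Next, I would analyze the behaviour of $\vph^{mer}$ with respect to the filtration. By definition of a connectively filtered cyclotomic spectrum, each $\vph_p$ sends $\fil_n V$ into $\fil_{pn}V^{t\bZ/p}$, and this multiplicative scaling is inherited by the assembled Frobenius $\vph:\TC^{-}(V) \to \TP^{\wedge}(V)$: for $n \leq 0$, it sends $\fil_n\TC^{-}(V)$ into $\fil_{2n}\TP^{\wedge}(V)$ (using that $2$ is the smallest prime and that the filtration on $\TP^{\wedge}(V)$ is assembled as a product). Through the splitting from the previous step, we obtain $\vph^{mer}(\fil_n\TP(V)_{\bQ}) \subseteq \fil_{2n}\TP(V)_{\bQ}$. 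Because $\fil_{-1}V = V$ forces $\gr_n V = 0$ for $n \geq 0$, the filtration on $\TP(V)_{\bQ}$ is supported in strictly negative degrees, and $\vph^{mer}$ strictly decreases filtration degree on every nonzero piece; in particular, $\vph^{mer}$ induces the zero map on each $\gr_n\TP(V)_{\bQ}$.

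Thus $\id - \vph^{mer}$ acts as the identity on every graded piece, and $\vph^{mer}$ is topologically nilpotent: its $k$-fold iterate carries $\fil_n$ into $\fil_{2^k n}$, which converges to zero by completeness. The Neumann series $\sum_{k \geq 0}(\vph^{mer})^k$ converges in the completed filtered category and furnishes an inverse to $\id - \vph^{mer}$, proving the proposition. The main obstacle will be setting up the filtered-spectrum formalism rigorously. The constructions $(-)^{h\bB\bZ}$ and $(-)^{t\bB\bZ}$ must be performed in a category of complete filtered spectra (following the graded setup alluded to in the proof of Proposition \ref{p:thh}), and one must verify that the rationalization of $\TP(V)$ remains complete with respect to its induced filtration --- a point where the naive commutation of $\otimes\bQ$ with inverse limits fails. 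This is resolved using the strong coconnectivity of $\fil_{-m}V$ to obtain termwise vanishing of the relevant $\lim^1$ terms, together with the vanishing $\TP(V_{\bQ}) = 0$ established above. Once this formalism is in place, the multiplicative scaling of the cyclotomic Frobenius makes the remainder of the argument essentially formal.
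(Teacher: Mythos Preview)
Your overall strategy matches the paper's: establish the meromorphic Frobenius via the induced complete filtration on $\TP(V)$, then use that $\vph^{mer}$ strictly decreases filtration degree to invert $\id-\vph^{mer}$. The first two paragraphs are essentially the paper's Steps 1--2.

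The gap is in your last step. Your Neumann series argument needs the filtration on $\TP(V)_{\bQ}$ to be complete, and you correctly flag this as the crux. But your proposed fix does not work: the paper states explicitly that this filtration is \emph{not typically complete}. The coconnectivity hypothesis controls $\fil_{-m}V$ and hence $\fil_{-m}V_{h\bB\bZ}$, but it does not control $\fil_{-m}\TP(V)=(\fil_{-m}V)^{t\bB\bZ}$, which has no uniform connectivity bound. And $\lim^1$-vanishing is not enough: even for a tower of abelian groups with vanishing limit, rationalizing termwise can produce a nonzero limit (e.g.\ $\bZ \xleftarrow{p} \bZ \xleftarrow{p} \cdots$ has $\lim=0$ but $\lim$ of the rationalization is $\bQ$). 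So the rationalized filtration cannot be salvaged this way, and the Neumann series has nowhere to converge.

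The paper's workaround is different in kind. Rather than prove completeness after rationalization, it works one cohomological truncation at a time. Fixing $n$ and choosing $m$ large enough that $\tau^{\geq -n}\fil_{-m}V=0$, the norm map $\fil_{-m}V_{h\bB\bZ}[1]\to\fil_{-m}\TC^{-}(V)$ becomes an isomorphism on $\tau^{\geq -n}$, so $\TC^{-}$ and $\TP$ agree there. This means $\vph^{mer}$ on $\tau^{\geq -n}\fil_{-m}\TP(V)_{\bQ}$ lifts to an \emph{integral} map $f_n$ on $\tau^{\geq -n}\fil_{-m}\TP(V)$ (no rationalization needed to pass from $\TP$ back to $\TC^{-}$). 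Since the integral filtration on $\TP(V)$ \emph{is} complete and $f_n$ still decreases filtration, $\id-f_n$ is invertible there; rationalizing and combining with the associated-graded argument finishes the proof. The missing idea is this integral lift on truncations, which lets you exploit completeness of the integral filtration instead of the (false) completeness of the rational one.
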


From Lemma \ref{l:mer-inv}, we immediately deduce:

\begin{cor}\label{c:fil-rat'l}

In the setting of Proposition \ref{p:fil-inv}, 
$\TC(V)\otimes \bQ \isom \TC(V_{\bQ})$.

\end{cor}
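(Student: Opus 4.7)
The plan is a direct combination of the two preceding results: Proposition \ref{p:fil-inv} supplies the hypotheses needed to invoke Lemma \ref{l:mer-inv}, which then outputs exactly the statement we want.

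First I would observe that Proposition \ref{p:fil-inv} asserts precisely that under the given filtration hypotheses, $V$ admits a meromorphic Frobenius (so in particular $\TP(V) \isom \TP^{\wedge}(V)$, allowing the construction of $\vph^{mer}$) and that the map $\id - \vph^{mer}: \TP(V) \otimes \bQ \to \TP(V) \otimes \bQ$ is an isomorphism. Since $\TP^{\wedge}(V) = \TP(V)$ in this case, the same map is an isomorphism on $\TP^{\wedge}(V) \otimes \bQ$.

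Next I would invoke Lemma \ref{l:mer-inv}: for cyclotomic spectra $V$ admitting a meromorphic Frobenius, the invertibility of $\id - \vph^{mer}$ on $\TP^{\wedge}(V)_\bQ$ is equivalent to the natural map $\TC(V)_\bQ \to \TC(V_\bQ)$ being an isomorphism. Feeding in the conclusion of the previous paragraph yields exactly the corollary.

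There is no real obstacle here since all of the substantive content has been absorbed into Proposition \ref{p:fil-inv} (which controls the meromorphic Frobenius and its invertibility via the filtration) and Lemma \ref{l:mer-inv} (which translates such invertibility into the desired rational comparison via the expression \eqref{eq:tc-tp} for $\TC$ as an equalizer and the splitting of Lemma \ref{l:tp-split}). The only minor point to be careful about is the matter of profinite completion: one should confirm that $\TP^{\wedge}(V) = \TP(V)$ under the hypothesis of admitting a meromorphic Frobenius, so that the statement of Proposition \ref{p:fil-inv} (phrased in terms of $\TP(V)$) matches the hypothesis of Lemma \ref{l:mer-inv} (phrased in terms of $\TP^{\wedge}(V)$), but this is immediate from the definition.
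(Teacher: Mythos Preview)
Your proposal is correct and matches the paper's approach exactly: the paper simply states that the corollary is immediately deduced from Lemma \ref{l:mer-inv} (given the conclusion of Proposition \ref{p:fil-inv}), which is precisely the combination you spell out. Your extra remark about reconciling $\TP(V)$ with $\TP^{\wedge}(V)$ is a harmless clarification.
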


\begin{proof}[Proof of Proposition \ref{p:fil-inv}]


\step 

First, note that $\bB \bZ$-equivariance of the
filtration on $V$, $\TC^-(V)$ and $V_{h\bB \bZ}$
inherit filtrations as well. These filtrations are
complete: for $\TC^-(V)$ this is automatic, and
for $V_{h\bB \bZ}$ this follows because the filtration 
on $\tau^{\geq -n} V_{h\bB \bZ}$ is
bounded from below for any $n$.

The norm map $V_{h\bB \bZ}[1] \to \TC^-(V)$ is clearly filtered,
so $\TP(V)$ also has a complete filtration. 
Note that $\gr_n \TP(V) = (\gr_n V)^{t\bB \bZ}$, and similarly
for the other players. In particular, $\gr_i \TP(V) = 0$ for
$n \geq 0$. 

In particular, we see that
$\ul{\Hom}(\bQ,\TP(V))$ has a complete filtration with
associated graded $\ul{\Hom}(\bQ,\gr_{\dot} \TP(V)) = 0$.
The same logic applies for $V_{\bQ}$, noting that
the filtration is complete for the same reason as for
$V_{h\bB\bZ}$. Therefore, we obtain
$\ul{\Hom}(\bQ,\gr_{\dot} \TP(V_{\bQ})) = 0$ 
and deduce that $V$ admits a meromorphic Frobenius.

\step 

Note that Lemma \ref{l:tp-split} applies just as well in the
setting of (connectively) filtered cyclotomic spectra.
Therefore, the meromorphic Frobenius on $\TP(V)_{\bQ}$
maps $\fil_n \TP(V)_{\bQ}$ ($=(\fil_n \TP(V))_{\bQ}$) to 
$\fil_{pn} \TP(V)_{\bQ}$.
Since $\fil_{-1} \TP(V)_{\bQ} = \TP(V)_{\bQ}$, we would
be done if this filtration on $\TP(V)_{\bQ}$ were complete.
But this is not typically true, 
so some additional argument is needed.

\step 

Fix $n$, and assume $m$ is large enough that
$\tau^{\geq -n} \fil_{-m} V = 0$.

We claim 
$\vph^{mer}:\tau^{\geq -n} \TP(V)_{\bQ} \to \TP(V)_{\bQ}$
is actually \emph{integral} on $\fil_{-m} V$.
That is, there are a canonical maps
$f_n:\tau^{\geq -n} \fil_{-m} \TP(V) \to 
\tau^{\geq -n} \fil_{-m} \TP(V)$
fitting into commutative diagrams:

\[
\xymatrix{
\tau^{\geq -n} \fil_{-m} \TP(V) \ar[rr]^{f_n} \ar[d] && 
\tau^{\geq -n} \fil_{-m} \TP(V) \ar[d] \\
\tau^{\geq -n} \fil_{-m} \TP(V)_{\bQ} \ar[rr]^{\vph^{mer}} && 
\tau^{\geq -n} \fil_{-m} \TP(V)
}
\]

\noindent and compatible in the natural sense as we suitably vary $n$ and $m$.

Indeed, our assumptions give $\fil_{-m} V_{h\bB \bZ} \in \Sp^{\leq -n}$,
so $\fil_{-m} V^{h\bB \bZ} \to \fil_{-m} V^{t\bB \bZ}$ induces an isomorphism
on $\tau^{\geq -n}$. Now our map $f_n$ is induced by the cyclotomic
Frobenius, noting that $\TP(V) = \TP^{\wedge}(V)$ as filtered
spectra by assumption. It is immediate to see that $f_n$ fits
into a commutative diagram as above.

\step 

Let $\TP_p^{\wedge}(V)$ be the $p$-adic completion of $\TP(V)$.
Note that by construction, $f_n$ induces a map
$\tau^{\geq -n} \fil_{-m} \TP(V) \to 
\tau^{\geq -n} \fil_{-pm} \TP_p^{\wedge}(V)$.

In particular, $f_n$ itself maps through 
$\tau^{\geq -n} \fil_{-2m} \TP(V)$.
We deduce that $\id-f_n:\tau^{\geq -n} \fil_{-m} \TP(V) \to
\tau^{\geq -n} \fil_{-m} \TP(V)$ is invertible (since the
filtration on $\TP(V)$ is complete).

Because tensoring with $\bQ$ is $t$-exact,
the map $\id-\vph^{mer}:\tau^{\geq -n} \fil_{-m} \TP(V)_{\bQ} \to
\tau^{\geq -n} \fil_{-m} \TP(V)_{\bQ}$ is 
an isomorphism. Since $\id-\vph^{mer}$ was an isomorphism   
on all associated graded terms, we now deduce that
it is an isomorphism after applying $\tau^{\geq -n}$ for any $n$,
and therefore is itself an isomorphism.

\end{proof}

\subsection{}

To prove Theorem \ref{t:rat'l}, it suffices (by Corollary \ref{c:fil-rat'l})
to show the following.

\begin{lem}\label{l:sq-zero-tp}

For $B \to A \in \Alg_{conn}$ a square-zero extension,
$\THH(B/A)$ admits a filtration as in Proposition \ref{p:fil-inv}.

\end{lem}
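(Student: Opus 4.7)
The filtration will come from the natural two-step ``$I$-adic'' filtration on $B$ as a filtered $\sE_1$-algebra. Explicitly, set $\fil_n B = B$ for $n \geq 0$, $\fil_{-1} B = I$ (the canonical ideal, i.e., the fiber of $B \to A$), and $\fil_{-n} B = 0$ for $n \geq 2$. All pieces are connective by hypothesis. Multiplicative compatibility $\fil_i B \otimes \fil_j B \to \fil_{i+j} B$ is automatic except on $\fil_{-1}\otimes\fil_{-1}$, where it amounts to asking that the composite $I \otimes_\bS I \to B$ be null; this is precisely the derived square-zero condition defining $B$. The associated graded is therefore $\gr B \cong \SqZero(A,I)$ as a graded $\sE_1$-algebra, with $A$ in weight $0$ and $I$ in weight $-1$.

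\textbf{Step 1 (filtered cyclotomic structure).} By Example \ref{e:filt-cyc}, $\THH(B)$ acquires a connectively filtered cyclotomic structure, compatible with the weight-$0$ projection to $\THH(A) = \THH(\gr_0 B)$. Taking the fiber gives a filtered cyclotomic structure on $V := \THH(B/A)$ concentrated in weights $\leq -1$, so that $\fil_n V = V$ for all $n \geq -1$. Moreover, by functoriality of $\gr$, applied via Example \ref{e:filt-cyc} and Proposition \ref{p:thh} to $\gr B = \SqZero(A,I)$, I obtain
\[
\gr V \;\cong\; \bigoplus_{n > 0} \on{Ind}_{\bZ/n}^{\bB\bZ} \THH(A, I^{\otimes_A n}[n])
\]
as a graded cyclotomic spectrum, the $n$-th summand placed in weight $-n$.

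\textbf{Step 2 (connectivity).} Since $I \in A\bimod^{\leq 0}$, we have $I^{\otimes_A n}[n] \in A\bimod^{\leq -n}$, hence $\THH(A, I^{\otimes_A n}[n]) \in \Sp^{\leq -n}$; induction preserves connectivity, so $\gr_{-n} V \in \Sp^{\leq -n}$ for all $n \geq 1$. Using completeness of the filtration and the fiber sequences $\fil_{-m-1}V \to \fil_{-m}V \to \gr_{-m}V$, one inductively deduces $\fil_{-m} V \in \Sp^{\leq -m+1}$, which is the second hypothesis of Proposition \ref{p:fil-inv}.

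\textbf{Step 3 (meromorphic Frobenius on $\gr V$).} Because $\TP$ is weight-graded, the condition on $\gr V$ reduces to showing that each $\TP\!\big(\on{Ind}_{\bZ/n}^{\bB\bZ} \THH(A, I^{\otimes_A n}[n])\big)$ is profinite, and stays so after tensoring with $\bQ$. The key ingredient is that for the right-adjoint induction from $\bZ/n$-spectra to $\bB\bZ$-spectra, the $\bB\bZ$-Tate construction decomposes along the chain $\bZ/n \subset \bB\bZ$ into pieces involving a $\bZ/n$-Tate factor (which is $n$-torsion, hence vanishes rationally) together with residual $\bB\bZ/(\bZ/n) = \bB\bZ$-invariants/Tate, which are controlled by the connectivity of the coefficient spectrum. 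Combining the $n$-torsion vanishing with the fact that $W_n := \THH(A, I^{\otimes_A n}[n]) \in \Sp^{\leq -n}$ gives the desired profiniteness.

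\textbf{Main obstacle.} Step 3 is where the real work lies: the careful analysis of $\TP$ of right-adjoint induced $\bB\bZ$-spectra, and in particular the verification that the rationalized Tate contributions come entirely from $\bZ/n$-Tate sources and hence vanish. Everything else (the filtered algebra structure, its graded identification with $\SqZero(A,I)$, and the connectivity estimate) is formal from the definition of square-zero extension and the explicit computation of $\THH$ for split square-zero extensions in Proposition \ref{p:thh}.
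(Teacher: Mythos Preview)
Your approach is essentially the same as the paper's: equip $B$ with the two-step $I$-adic filtration, invoke Example \ref{e:filt-cyc} to get a connectively filtered cyclotomic structure on $\THH(B/A)$ with associated graded $\THH(\SqZero(A,I)/A)$, verify the connectivity bound, and then check the meromorphic Frobenius hypothesis on the associated graded.

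The one place where your execution diverges from the paper is Step 3. You attempt to analyze $\TP$ of each induced piece $\on{Ind}_{\bZ/n}^{\bB\bZ} \THH(A,I^{\otimes_A n}[n])$ from scratch, invoking a decomposition into ``$\bZ/n$-Tate factors'' plus ``residual $\bB\bZ$-invariants/Tate''. This is more complicated than necessary, and slightly off: the $\bZ/n$-Tate construction on connective spectra is $n$-adically complete rather than $n$-torsion, and there are in fact no residual pieces once one has the clean formula. The paper instead simply cites the computation implicit in the proof of Theorem \ref{t:tc-calc}: under the connectivity assumption (which forces $\oplus = \prod$), one has
\[
\TP(\THH(\SqZero(A,M)/A)) \;=\; \prod_{n\geq 1} \THH(A,M[1]^{\otimes n})^{t\bZ/n},
\]
and then profiniteness (respectively, vanishing after rationalization) follows because the $\bZ/n$-Tate construction on connective spectra is $n$-adically complete. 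This shortcut both sharpens your argument and removes the vagueness you flag as the ``main obstacle''.
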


\begin{proof}

Suppose $B$ is a square-zero extension of $A$ by $M \in A\bimod^{\leq 0}$.
Note that $B$ admits a two-step descending
filtration with $\gr_{\dot} B = \SqZero(A,M)$ (more precisely:
$\gr_0 = A$ and $\gr_{-1} = M$).

As in Example \ref{e:filt-cyc}, $\THH(B/A)$ is a 
connectively filtered cyclotomic spectrum with 
$\gr_{\dot} \THH(B/A) = \THH(\SqZero(A,M)/A)$.
By connectivity of $A$ and $B$, we have
$\fil_{-n-1} \THH(B/A) \in \Sp^{\leq -n}$.
Moreover, it is clear that $\gr_i \THH(B/A) = 0$ for $i \geq 0$.

It remains to show that $\THH(\SqZero(A,M)/A)$ admits a meromorphic
Frobenius. As in the proof of Theorem \ref{t:tc-calc}, we have:

\[
\TP(\SqZero(A,M)/A) = \prod_{n \geq 1} \THH(A,M[1]^{\otimes n})^{t \bZ/n}.
\]

\noindent The $\bZ/n$-Tate construction
applied to connective spectra produces $n$-adically complete spectra,
showing that $\TP(\SqZero(A,M)/A)$ is profinite.
We then have: 

\[
\TP\big((\SqZero(A,M)/A)_{\bQ}\big) = 
\TP(\SqZero(A_{\bQ},M_{\bQ})/A_{\bQ})
\]

\noindent which is profinite by the above.

\end{proof}

\subsection{Sifted colimits}

We now show:

\begin{prop}

$\TC$ infinitesimally commutes with sifted colimits.

\end{prop}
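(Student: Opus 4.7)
The strategy parallels the filtered argument from the proof of Theorem \ref{t:rat'l}. Fix a sifted diagram $((B_i \to A_i))_{i \in I}$ in $\Alg_{conn}^{\SqZero}$ with colimit $(B \to A)$; let $M_i = \Ker(B_i \to A_i) \in A_i\bimod^{\leq 0}$ and $M = \Ker(B \to A)$. The goal is to prove $\colim_i \TC(B_i/A_i) \isom \TC(B/A)$. By Lemma \ref{l:sq-zero-tp} and Example \ref{e:filt-cyc}, the two-step filtration $M \to B \to A$ on $B$ (functorial in $(B \to A) \in \Alg_{conn}^{\SqZero}$) refines $\THH(B/A)$ to a connectively filtered cyclotomic spectrum whose associated graded is $\THH(\SqZero(A,M)/A)$ and with $\fil_{-n-1}$ in $\Sp^{\leq -n}$. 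This filtration itself commutes with sifted colimits in $(B \to A)$, since at the algebra level the filtered object $M \to B \to A$ does so, and the filtered trace / $\THH$ construction commutes with sifted colimits. As in the proof of Proposition \ref{p:fil-inv}, this induces complete filtrations on $\TC^-(B/A)$ and $\TP^{\wedge}(B/A)$, both compatible with sifted colimits in $(B\to A)$. By left completeness of the $t$-structure on $\Sp$, it suffices to show that for each fixed $n \geq 0$ the functor $\tau^{\geq -n}\TC(-/-)$ commutes with sifted colimits.

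Using the Nikolaus--Scholze formula \eqref{eq:tc-tp} together with the connectivity estimate $\fil_{-m} \THH(B/A) \in \Sp^{\leq -m+1}$, only finitely many graded pieces of the filtration contribute to $\tau^{\geq -n}\TC(B/A)$; equivalently, the contribution of $\fil_{-m}$ is negligible once $m$ is large enough compared to $n$. I am thus reduced to showing that for fixed $n$, $\tau^{\geq -n}\TC(\SqZero(A,M)/A)$ commutes with sifted colimits in $(A,M) \in \Alg_{conn}^{\SqZero}$. Theorem \ref{t:tc-calc} expresses $\TC_{red}(\SqZero(A,M))$ as $\underset{k}{\lim}\, \tr_{A\mod}(T_M^k)^{\bZ/k}$ with the $k$th graded piece $\tr_{A\mod}(T_M^k)_{h\bZ/k} \in \Sp^{\leq -k}$, so again by completeness of this limit only finitely many $k$'s contribute to any fixed truncation. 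Each genuine fixed point $\tr_{A\mod}(T_M^k)^{\bZ/k}$ is, by the definition in \S\ref{ss:genuine}, an iterated pullback involving the spectra $\tr_{A\mod}(T_M^j) = \THH(A,M^{\otimes_A j}[j])$ for $j \mid k$ together with their $\bZ/p$-Tate and $\bZ/d$-homotopy-fixed-point constructions.

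Each such $\THH(A,M^{\otimes_A j}[j])$ lies in $\Sp^{\leq -j}$ and is manifestly sifted-colimit-preserving in $(A,M)$: $\THH(A,-)\colon A\bimod \to \Sp$ is a continuous exact trace, and the assignment $(A,M) \mapsto M^{\otimes_A j}$ commutes with sifted colimits as an iterated relative tensor product. The main obstacle is that the Tate constructions $(-)^{t\bZ/p}$ appearing in the definition of the genuine fixed points do not commute with sifted colimits in general. This is overcome by the same connectivity mechanism used in the proof of Proposition \ref{p:fil-inv}: after fixing $n$, only finitely many $j$ contribute nontrivially to $\tau^{\geq -n}$, and within that finite range each Tate and homotopy-fixed-point construction applied to a spectrum of connectivity $\leq -j$ becomes, up to truncation $\tau^{\geq -n}$, a \emph{finite} limit of spectra of controlled connectivity; such finite limits can be commuted past sifted colimits (either directly, or via the argument that relied on integrality of the meromorphic Frobenius on bounded pieces of the filtration in Proposition \ref{p:fil-inv}). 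Assembling these finite-level sifted-colimit commutations and passing back through the two filtrations using left completeness yields the claim.
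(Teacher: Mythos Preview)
Your approach has a genuine gap at the crucial step involving Tate constructions. You assert that ``each Tate and homotopy-fixed-point construction applied to a spectrum of connectivity $\leq -j$ becomes, up to truncation $\tau^{\geq -n}$, a finite limit of spectra of controlled connectivity.'' This is false. For instance, take $V = \bZ \in \Sp^{\leq 0}$ with trivial $\bZ/p$-action: then $\pi_0(V^{t\bZ/p}) = \bZ/p$, and this class is not detected by any finite stage of the Postnikov or skeletal filtration computing $V^{t\bZ/p}$. Concretely, the functor $V \mapsto \tau^{\geq 0}(V^{t\bZ/p})$ on $\Sp^{\leq 0}$ does not commute with filtered colimits: along the diagram $\bZ \xar{p} \bZ \xar{p} \cdots$ with colimit $\bZ[1/p]$, each term has $\pi_0((-)^{t\bZ/p}) = \bZ/p$ while the colimit has $(-)^{t\bZ/p} = 0$. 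The alternative you offer, invoking the integrality argument of Proposition~\ref{p:fil-inv}, does not help either: that argument establishes invertibility of $\id - \vph^{mer}$ on deep filtration pieces, which is a statement about a single object and says nothing about commutation with colimits.

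The paper's proof circumvents this difficulty by an arithmetic fracture that you do not attempt. One checks separately that $\TC(-)/p$ and $\TC(-)_{\bQ}$ infinitesimally commute with sifted colimits. For $\TC(-)/p$ one cites the nontrivial result of Clausen--Mathew--Morrow that $\TC(-)/p$ commutes with sifted colimits on all of $\Alg_{conn}$; this is precisely where the bad behavior of $(-)^{t\bZ/p}$ with respect to colimits is tamed (mod $p$, the relevant Tate constructions acquire the needed finiteness). For $\TC(-)_{\bQ}$ one uses Theorem~\ref{t:rat'l} to identify $\TC(B/A)_{\bQ} \simeq \TC(B_{\bQ}/A_{\bQ})$, and then observes that over $\bQ$ one has $\TC = \TC^-$ and $\TP(B_{\bQ}/A_{\bQ}) = 0$, so $\TC(B_{\bQ}/A_{\bQ}) \simeq \THH(B_{\bQ}/A_{\bQ})_{h\bB\bZ}[1]$, which manifestly commutes with sifted colimits. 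Your filtration reductions are not wrong in spirit, but they cannot replace the mod-$p$ input from \cite{cmm}.
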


\begin{proof}

It suffices to show that the functors $\TC(-)_{\bQ}$ and $\TC(-)/p$
infinitesimally commute with sifted colimits, where $p$ varies over all primes.
(Indeed, the functor $\Sp \to \Sp$ of tensoring with 
$\bQ \bigoplus\oplus_p \bS/p$ is continuous and conservative.)

In fact, the functor $\TC(-)/p:\Alg_{conn} \to \Sp$ (without relativization)
commutes with sifted colimits by 
\cite{cmm} Corollary 2.15.\footnote{In fact, it is clear from the proof that
in Theorem \ref{t:conv/inf}, we only needed $\Psi$ to infinitesimally
commute with geometric realizations, and here the argument is more
elementary: see \cite{cmm} Corollary 2.6.}

Now for $B \to A$ a square-zero extension, we have
$\TC(B/A)_{\bQ} = \TC(B_{\bQ}/A_{\bQ})$ by Theorem \ref{t:rat'l}.
Note that $\TC = \TC^-$ for rational ring spectra, so 
we have $\TC(B_{\bQ}/A_{\bQ}) = \TC^-(B_{\bQ}/A_{\bQ})$.
Finally, as $\TP(B_{\bQ}/A_{\bQ}) = 0$ by Lemma \ref{l:sq-zero-tp},
we have $\TC^-(B_{\bQ}/A_{\bQ}) = \THH(B_{\bQ}/A_{\bQ})_{h\bB \bZ}[1]$,
and this functor manifestly commutes with sifted colimits.

\end{proof}

\subsection{Convergence}

It remains to show that $\TC:\Alg_{conn} \to \Sp$ is convergent, 
which is straightforward. 

Note that $\THH:\Alg_{conn} \to \Sp$ is convergent, 
since $\tau^{\geq -n}\THH(A) \isom \tau^{\geq -n}\THH(\tau^{\geq -n} A)$
for every $n$. We formally obtain convergence of
$\TC^-$, since it is obtained as homotopy invariants from $\THH$.

Note that $\THH(-)_{h \bB \bZ}$ is convergent for the same reason
as $\THH$. Combined with the above, we obtain that $\TP$ is convergent.
This clearly implies that its profinite completion is convergent,
so we obtain the same property for $\TC$ from \eqref{eq:tc-tp}.

\bibliography{bibtex}{}

\begin{thebibliography}{AMGR}

\bibitem[AMGR]{amr}
David Ayala, Aaron Mazel-Gee, and Nick Rozenblyum.
\newblock {A naive approach to genuine $ G $-spectra and cyclotomic spectra}.
\newblock {\em arXiv preprint arXiv:1710.06416}, 2017.

\bibitem[Bar]{barwick-exact}
Clark Barwick.
\newblock {On exact $\infty$-categories and the theorem of the heart}.
\newblock {\em Compositio Mathematica}, 151(11):2160--2186, 2015.

\bibitem[Bei]{beilinson}
Alexander Beilinson.
\newblock {Relative continuous $K$-theory and cyclic homology}.
\newblock {\em arXiv preprint arXiv:1312.3299}, 2013.

\bibitem[BGT]{bgt}
Andrew~J Blumberg, David Gepner, and Gon{\c{c}}alo Tabuada.
\newblock {A universal characterization of higher algebraic $K$-theory}.
\newblock {\em Geometry \& Topology}, 17(2):733--838, 2013.

\bibitem[BHM]{bhm}
Marcel B{\"o}kstedt, Wu~Chung Hsiang, and Ib~Madsen.
\newblock {The cyclotomic trace and algebraic $K$-theory of spaces}.
\newblock {\em Inventiones mathematicae}, 111(1):465--539, 1993.

\bibitem[Blo]{bloch}
Spencer Bloch.
\newblock {On the tangent space to Quillen $K$-theory}.
\newblock In {\em Higher K-Theories}, pages 205--210. Springer, 1973.

\bibitem[BMS]{bms2}
Bhargav Bhatt, Matthew Morrow, and Peter Scholze.
\newblock {Topological Hochschild homology and integral $ p $-adic Hodge
  theory}.
\newblock {\em arXiv preprint arXiv:1802.03261}, 2018.

\bibitem[Bru]{brun-filtered}
Morten Brun.
\newblock {Filtered topological cyclic homology and relative $K$-theory of
  nilpotent ideals}.
\newblock {\em Algebraic \& Geometric Topology}, 1(1):201--230, 2001.

\bibitem[BS]{bhatt-scholze-grg}
Bhargav Bhatt and Peter Scholze.
\newblock {Projectivity of the Witt vector affine Grassmannian}.
\newblock {\em Inventiones mathematicae}, 209(2):329--423, 2017.

\bibitem[CMM]{cmm}
Dustin Clausen, Akhil Mathew, and Matthew Morrow.
\newblock {$K$-theory and topological cyclic homology of Henselian pairs}.
\newblock {\em arXiv preprint arXiv:1803.10897}, 2018.

\bibitem[DGM]{dgm}
Bj{\o}rn~Ian Dundas, Thomas Goodwillie, and Randy McCarthy.
\newblock {\em {The local structure of algebraic $K$-theory}}, volume~18.
\newblock Springer Science \& Business Media, 2012.

\bibitem[DM1]{dundas-mccarthy}
Bj{\o}rn~Ian Dundas and Randy McCarthy.
\newblock {Stable $K$-theory and topological Hochschild homology}.
\newblock {\em Annals of Mathematics}, 140(3):685--701, 1994.

\bibitem[DM2]{dundas-mccarthy-erratum}
Bj{\o}rn~Ian Dundas and Randy McCarthy.
\newblock {Erratum: Stable $K$-Theory and Topological Hochschild Homology}.
\newblock {\em Annals of Mathematics}, 142:425--426, 1995.

\bibitem[Fon]{fontes}
Ernest Fontes.
\newblock {A weighty theorem of the heart for the algebraic $K$-theory of
  higher categories}.
\newblock 2017.
\newblock Ph.D. thesis, available at
  \url{https://repositories.lib.utexas.edu/handle/2152/62096}.

\bibitem[FOV]{fov}
Zbigniew Fiedorowicz, C~Ogle, and Rainer~M Vogt.
\newblock {Volodin $K$-theory of $A_{\infty}$-ring spaces}.
\newblock {\em Topology}, 32(2):329--352, 1993.

\bibitem[Gai]{shvcat}
Dennis Gaitsgory.
\newblock Sheaves of categories and the notion of 1-affineness.
\newblock {\em Stacks and categories in geometry, topology, and algebra},
  643:127--225, 2015.

\bibitem[Goo1]{goodwillie}
Thomas~G Goodwillie.
\newblock {Relative algebraic $K$-theory and cyclic homology}.
\newblock {\em Annals of Mathematics}, 124(2):347--402, 1986.

\bibitem[Goo2]{goodwillie-icm}
Thomas~G. Goodwillie.
\newblock The differential calculus of homotopy functors.
\newblock In {\em Proceedings of the {I}nternational {C}ongress of
  {M}athematicians, {V}ol.\ {I}, {II} ({K}yoto, 1990)}, pages 621--630. Math.
  Soc. Japan, Tokyo, 1991.

\bibitem[GR]{grbook}
Dennis Gaitsgory and Nick Rozenblyum.
\newblock {\em {A study in derived algebraic geometry: Volume I:
  correspondences and duality}}.
\newblock Number 221. American Mathematical Soc., 2017.

\bibitem[Hes1]{hesselholt-stable-tc}
Lars Hesselholt.
\newblock {\em {Stable topological cyclic homology is topological Hochschild
  homology}}.
\newblock Matematisk Inst., Univ., 1993.

\bibitem[Hes2]{hesselholt-zeta}
Lars Hesselholt.
\newblock Topological {H}ochschild homology and the {H}asse-{W}eil zeta
  function.
\newblock In {\em An alpine bouquet of algebraic topology}, volume 708 of {\em
  Contemp. Math.}, pages 157--180. Amer. Math. Soc., Providence, RI, 2018.

\bibitem[Hoy]{hoyois}
Marc Hoyois.
\newblock {On Quillen's plus construction}, 2018.
\newblock Available at \url{http://www-bcf.usc.edu/~hoyois/papers/acyclic.pdf}.

\bibitem[KP]{kp}
Grigory Kondyrev and Artem Prihodko.
\newblock {Categorical proof of Holomorphic Atiyah-Bott formula}.
\newblock {\em arXiv preprint arXiv:1607.06345}, 2016.

\bibitem[LM]{lindenstrauss-mccarthy}
Ayelet Lindenstrauss and Randy McCarthy.
\newblock {On the Taylor tower of relative $K$-theory}.
\newblock {\em Geometry \& Topology}, 16(2):685--750, 2012.

\bibitem[Lur1]{htt}
Jacob Lurie.
\newblock {\em {Higher Topos Theory}}.
\newblock Princeton University Press, 2009.

\bibitem[Lur2]{higheralgebra}
Jacob Lurie.
\newblock Higher algebra.
\newblock Available at
  \url{http://math.harvard.edu/~lurie/papers/HigherAlgebra.pdf}, 2012.

\bibitem[McC]{mccarthy}
Randy McCarthy.
\newblock {Relative algebraic $K$-theory and topological cyclic homology}.
\newblock {\em Acta Mathematica}, 179(2):197--222, 1997.

\bibitem[Mil]{milnor}
John Milnor.
\newblock {\em {Introduction to Algebraic K-Theory}}, volume~72.
\newblock Princeton University Press, 2016.

\bibitem[Nik]{thomas-func}
Thomas Nikolaus.
\newblock {Topological Hochschild homology and cyclic $K$-theory}, 2018.
\newblock Preprint.

\bibitem[NS]{nikolaus-scholze}
Thomas Nikolaus and Peter Scholze.
\newblock {On topological cyclic homology}.
\newblock {\em arXiv preprint arXiv:1707.01799}, 2017.

\bibitem[SSW]{ssw}
Roland Schw{\"a}nzl, Ross Staffeldt, and Friedhelm Waldhausen.
\newblock {Stable $K$-theory and topological Hochschild homology of
  $A_{\infty}$ rings}.
\newblock In {\em Algebraic K-theory: Conference on Algebraic K-theory:
  September 4-8, 1995, the Adam Mickiewicz University, Pozn{\'a}n, Poland},
  volume 199, page 161. American Mathematical Soc., 1996.

\bibitem[Sus]{suslin}
AA~Suslin.
\newblock {On the equivalence of $K$-theories}.
\newblock {\em Communications in Algebra}, 9(15):1559--1566, 1981.

\bibitem[Wal]{waldhausen-stable}
Friedhelm Waldhausen.
\newblock Algebraic {$K$}-theory of topological spaces. {I}.
\newblock In {\em Algebraic and geometric topology ({P}roc. {S}ympos. {P}ure
  {M}ath., {S}tanford {U}niv., {S}tanford, {C}alif., 1976), {P}art 1}, Proc.
  Sympos. Pure Math., XXXII, pages 35--60. Amer. Math. Soc., Providence, R.I.,
  1978.

\end{thebibliography}
\bibliographystyle{alphanum}

\end{document}